\DeclareMathOperator{\iindex}{index}
\DeclareMathOperator{\sign}{sign}
\DeclareMathOperator{\rank}{rank}
\DeclareMathOperator{\Ran}{Ran}
\DeclareMathOperator{\Dom}{Dom}
\DeclareMathOperator{\Ker}{Ker}
\DeclareMathOperator{\Tr}{Tr}
\DeclareMathOperator*{\slim}{s-lim}
\renewcommand\Im{\hbox{{\rm Im}}\,}
\renewcommand\Re{\hbox{{\rm Re}}\,}
\newcommand{\abs}[1]{\lvert#1\rvert}
\newcommand{\Abs}[1]{\left\lvert#1\right\rvert}
\newcommand{\norm}[1]{\lVert#1\rVert}
\newcommand{\R}{{\mathbb R}}
\newcommand{\C}{{\mathbb C}}
\newcommand{\calH}{{\mathcal H}}
\newcommand{\calK}{{\mathcal K}}
\newcommand{\calF}{\mathcal{F}}
\newcommand{\calN}{\mathcal{N}}
\newcommand{\calT}{\mathcal{T}}
\newcommand{\calM}{\mathcal{M}}
\newcommand{\calA}{\mathcal{A}}
\newcommand{\calB}{\mathcal{B}}
\numberwithin{equation}{section}
\theoremstyle{plain}
\newtheorem{theorem}{\bf Theorem}[section]
\newtheorem{lemma}[theorem]{\bf Lemma}
\newtheorem{proposition}[theorem]{\bf Proposition}
\newtheorem*{definition*}{\bf Definition}
\theoremstyle{definition}
\theoremstyle{remark}
\newtheorem*{remark*}{\bf Remark}
\newtheorem*{remarks*}{\bf Remarks}
\newtheorem{remark}[theorem]{\bf Remark}
\renewcommand{\qed}{\vrule height7pt width5pt depth0pt}
\newcommand{\wt}{\widetilde}
\newcommand{\ess}{{\rm ess}}
\begin{document}

\title[Birman-Schwinger principle]{The Birman-Schwinger principle on the 
essential spectrum}
\author{Alexander Pushnitski}
\address{Department of Mathematics\\
King's College London\\ 
Strand, London WC2R  2LS, U.K.}
\curraddr{}
\email{alexander.pushnitski@kcl.ac.uk}
\thanks{}

\subjclass[2000]{Primary 47A40; Secondary 35P25, 47B25, 47F05}

\keywords{Birman-Schwinger principle, essential spectrum, spectral projections}

\begin{abstract}
Let $H_0$ and $H$ be self-adjoint operators in a Hilbert space.
We consider the spectral projections of $H_0$ and $H$ corresponding
to a semi-infinite interval of the real line. 
We discuss the index of this pair of spectral projections  
and   prove an identity which extends the Birman-Schwinger principle onto
the essential spectrum. 
We also relate this index to the spectrum of the scattering 
matrix for the pair $H_0$, $H$. 
\end{abstract}

\maketitle

\section{Introduction}\label{sec.z}

For a self-adjoint operator $H$ in a Hilbert space we denote by 
$E(\Lambda; H)$ the spectral projection of $H$ associated with 
a Borel set $\Lambda\subset\R$
and let 
$$
N(\Lambda;H)=\rank E(\Lambda; H)\leq \infty.
$$ 
Let $H_0$ and $H$ be two self-adjoint operators in a Hilbert 
space $\calH$; we wish to compare the eigenvalue distribution
functions   of $H_0$ and $H$. 
If our Hilbert space is finite dimensional, then the difference
\begin{equation}
N((-\infty,\lambda);H_0)-N((-\infty,\lambda);H)
\label{a1} 
\end{equation}
describes the shifts 
of the eigenvalues of $H$ relatively to the eigenvalues of $H_0$. 
Below we discuss a certain analogue of 
this difference
in the infinite dimensional case.

Throughout this paper, we assume that 
\begin{equation}
\begin{split}
&\text{$H_0$ and $H$
are semi-bounded from below with the same form domain}
\\
&\text{and the operator $V=H-H_0$ is $H_0$-form compact.}
\end{split}
\label{a1a}
\end{equation} 
This, in particular, ensures that the essential spectra
of $H_0$ and $H$ coincide: $\sigma_\ess(H_0)=\sigma_\ess(H)$.
Under these assumptions,  the difference \eqref{a1} is  of course still 
well defined
for $\lambda<\inf\sigma_\ess(H_0)$. 
The difficulty arises when the interval $(-\infty,\lambda)$ 
contains points of the  essential spectrum; 
then \eqref{a1} formally gives $\infty-\infty$. 

In this paper, we discuss the function
\begin{equation}
\Xi(\lambda; H,H_0)
=
\iindex\bigl (E((-\infty,\lambda);H_0),E((-\infty,\lambda);H)\bigr),
\label{a2}
\end{equation}
where the r.h.s. is the Fredholm index of a pair of projections,
the notion which is recalled in Section~\ref{sec.a2} below. 
As it will be clear from the discussion in Section~\ref{sec.a}, 
for $\lambda<\inf\sigma_\ess(H_0)$ we have
\begin{equation}
\Xi(\lambda;H,H_0)=
N((-\infty,\lambda);H_0)-N((-\infty,\lambda);H)
\label{a2a}
\end{equation}
and thus the definition \eqref{a2} provides a natural
regularisation of the difference \eqref{a1}. 
For $\lambda\in\R\setminus\sigma_\ess(H_0)$,
the index function 
$\Xi(\lambda;H,H_0)$ defined by \eqref{a2} has appeared 
before in the literature in various guises; we briefly discuss this 
in Section~\ref{sec.a3}. 
However, to the best of our knowledge, the index $\Xi(\lambda;H,H_0)$
for $\lambda$ \emph{on the essential spectrum} of $H_0$ has not been 
studied before. The purpose of this paper is to present a step in this 
direction. 
Our main result (Theorem~\ref{thm.a1} below) 
is an explicit formula for $\Xi(\lambda; H,H_0)$, $\lambda\in\sigma_\ess(H_0)$,
in terms of the ``sandwiched resolvent'' of $H_0$. 
This formula can be interpreted as an extension of the 
Birman-Schwinger principle onto the essential spectrum. 

To give the general flavour of our main  result, let us assume that 
$V\leq0$ in the quadratic form sense and suppose that the limit
$$
T_0(\lambda+i0)
=
\lim_{\varepsilon\to+0}\abs{V}^{1/2}(H_0-\lambda-i\varepsilon)^{-1}\abs{V}^{1/2}
$$
exists in the operator norm. 
Then, denoting $\Re T_0=(T_0+T_0^*)/2$, under the 
appropriate assumptions we prove that
\begin{equation}
\Xi(\lambda;H,H_0)
=
-N((1,\infty);\Re T_0(\lambda+i0)), 
\quad V\leq 0,
\label{z1}
\end{equation}
as long as $1$ is not an eigenvalue of $\Re T_0(\lambda+i0)$. 
For $\lambda<\inf\sigma(H_0)$, by virtue of \eqref{a2a}  this formula
simplifies to 
\begin{equation}
N((-\infty,\lambda);H)=N((1,\infty);T_0(\lambda+i0)), 
\quad V\leq0,
\label{z2}
\end{equation}
which is the Birman-Schwinger principle in its usual form.

Next, in the scattering theory framework we point out the following 
connection between $\Xi(\lambda; H,H_0)$ and the spectrum of the
scattering matrix $S(\lambda)$ corresponding to the pair $H_0$, $H$.
Recall that since $S(\lambda)$ is a unitary operator, the eigenvalues of $S(\lambda)$
are located on the unit circle in $\C$. 
Suppose that $\lambda$ is monotonically increasing, moving through
an interval of the absolutely continuous spectrum of $H_0$. Then 
every time that an eigenvalue of $S(\lambda)$ of multiplicity $n$ 
crosses the point $-1$ on the unit circle, the index $\Xi(\lambda; H,H_0)$
acquires a jump of $+n$ or $-n$. The jump of $+n$ occurs if
the eigenvalue of $S(\lambda)$ crosses $-1$ by rotating in a clockwise
direction, and $-n$ corresponds to the anti-clockwise rotation. 
See Theorem~\ref{thm.d1}.

Let us describe the structure of the paper. 
In Sections~\ref{sec.a2} and \ref{sec.a3} we recall 
the definition of the index of a pair of projections 
and collect the basic properties of the index  function 
$\Xi(\lambda;H,H_0)$ for $\lambda\notin\sigma_\ess(H_0)$. 
In Sections~\ref{sec.a4a} and \ref{sec.a4}, we 
recall the Birman-Schwinger principle for $\lambda\notin\sigma_\ess(H_0)$  
and state it in terms of the index function $\Xi(\lambda;H,H_0)$. 
In Section~\ref{sec.a5} we state our main result: the extension
of the Birman-Schwinger principle to the case $\lambda\in\sigma_\ess(H_0)$. 
Application to the Schr\"odinger operator is discussed in  
Section~\ref{sec.a8}.
In Section~\ref{sec.d}, we discuss the 
connection between the index function $\Xi(\lambda;H,H_0)$
and the spectrum of the scattering matrix $S(\lambda)$. 
The proof of the main result is given in Sections~\ref{sec.b}--\ref{sec.c}.

\section{Main results}\label{sec.a}

\subsection{The index of a pair of projections}\label{sec.a2}
Let $P,Q$ be orthogonal projections in a Hilbert space. 
By using some simple algebra 
(see e.g. \cite[Theorem~4.2]{ASS}) it is not difficult to see 
that 
$\sigma(P-Q)\subset[-1,1]$ and 
\begin{equation}
\dim \Ker(P-Q-\lambda I)=\dim \Ker(P-Q+\lambda I), 
\quad \lambda\not=\pm 1;
\label{a3}
\end{equation}
the proof of this is based on the identity
$$
(P-Q)W=W(Q-P), \quad W=I-P-Q.
$$
A pair $P,Q$  is called Fredholm, if
\begin{equation}
\{1,-1\}\cap\sigma_{\rm ess}(P-Q)=\varnothing.
\label{a3a}
\end{equation}
In particular, if $P-Q$ is compact, then the pair $P$, $Q$ is Fredholm.
The index of a Fredholm pair is defined by the formula
\begin{equation}
\iindex(P,Q)=\dim\Ker(P-Q-I)-\dim\Ker(P-Q+I).
\label{a4}
\end{equation}
We note that $\iindex(P,Q)$ coincides with the Fredholm index
of the operator $QP$ viewed as a map from $\Ran P$ to $\Ran Q$, 
see \cite[Proposition~3.1]{ASS}.

If $P-Q$ is a trace class operator,  then 
\begin{equation}
\iindex(P,Q)=\Tr(P-Q),
\label{b0}
\end{equation}
since all the eigenvalues of $P-Q$ apart from $1$ and $-1$  in the series 
$\Tr(P-Q)=\sum_k\lambda_k(P-Q)$
cancel out by \eqref{a3}. 
In the simplest case of finite rank projections $P,Q$ we have
$$
\iindex(P,Q)=\rank P-\rank Q.
$$
\subsection{Definition and basic properties of $\Xi$ }\label{sec.a3}
Let us accept the following
\begin{definition*}
Let $H_0$ and $H$ be self-adjoint operators in a Hilbert space.
Suppose that $E((-\infty,\lambda); H)$, $E((-\infty,\lambda);H_0)$ 
is a Fredholm pair. Then we will say that the index $\Xi(\lambda; H,H_0)$
exists and define it by 
$$
\Xi(\lambda; H,H_0)
=
\iindex\bigl (E((-\infty,\lambda);H_0),E((-\infty,\lambda);H)\bigr).
$$
\end{definition*}

Note that
by this definition, $\Xi(\lambda;H,H_0)$ is integer valued.
We need a simple existence statement for $\Xi$:
\begin{proposition}\label{rmk.a1}
Assume \eqref{a1a}. Then for all 
$\lambda\in\R\setminus\sigma_\ess(H_0)$ the difference 
of projections
$E((-\infty,\lambda); H)-E((-\infty,\lambda);H_0)$ is compact
and therefore the index 
$\Xi(\lambda;H,H_0)$ exists. 
\end{proposition}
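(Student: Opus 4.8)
The plan is to deduce compactness of the difference of spectral projections from the fact that a form-compact perturbation produces a compact difference of resolvents, and then to pass from resolvents to spectral projections by functional calculus, using the absence of essential spectrum at $\lambda$ and the semi-boundedness in \eqref{a1a} to absorb the jump of the indicator $\chi_{(-\infty,\lambda)}$. First I would record that, under \eqref{a1a}, the resolvent difference $(H-z)^{-1}-(H_0-z)^{-1}$ is compact for every $z\in\C\setminus\R$. Picking $c$ so large that $H_0+c>0$ and $H+c>0$, the KLMN form-sum representation of $H$ yields
$$
(H+c)^{-1}-(H_0+c)^{-1}=-(H_0+c)^{-1/2}(I+K)^{-1}K\,(H_0+c)^{-1/2},
$$
where $K=(H_0+c)^{-1/2}V(H_0+c)^{-1/2}$ is the bounded operator representing the form of $V$ relative to $H_0$; it is compact by the $H_0$-form compactness of $V$, and $\norm{K}<1$ once $c$ is large, so the right-hand side is well defined and compact. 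Hence the resolvent difference is compact at $z=-c$, and standard manipulations of the resolvent identity (writing both resolvents in terms of those at $-c$) propagate this to every $z\in\C\setminus\R$.

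Next I would let $\mathcal{C}$ denote the class of bounded Borel functions $\varphi$ on $\R$ for which $\varphi(H)-\varphi(H_0)$ is compact. The identity
$$
(\varphi\psi)(H)-(\varphi\psi)(H_0)=\varphi(H)\bigl(\psi(H)-\psi(H_0)\bigr)+\bigl(\varphi(H)-\varphi(H_0)\bigr)\psi(H_0),
$$
together with the fact that a norm limit of compact operators is compact, shows that $\mathcal{C}$ is a $*$-subalgebra of the bounded Borel functions that is closed under uniform convergence; it contains the constants, and by the previous step it contains the resolvent functions $t\mapsto(t-z)^{-1}$, $z\in\C\setminus\R$. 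By the Stone--Weierstrass theorem the closed algebra generated by these equals $C(\overline{\R})$, the continuous functions on $\R$ with a finite limit at infinity, so in particular $\mathcal{C}\supseteq C_c(\R)$.

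To conclude, since $\lambda\notin\sigma_\ess(H_0)=\sigma_\ess(H)$ I would choose $\delta>0$ with $[\lambda-\delta,\lambda+\delta]\cap\sigma_\ess(H_0)=\varnothing$; then $E([\lambda-\delta,\lambda+\delta];H_0)$ and $E([\lambda-\delta,\lambda+\delta];H)$ have finite rank (a compact interval disjoint from the essential spectrum carries only finitely many eigenvalues, counted with multiplicity), so $\sigma(H_0)\cup\sigma(H)$ is finite in $[\lambda-\delta,\lambda+\delta]$ and I may pick $\mu\in(\lambda-\delta,\lambda)$ inside an open interval $(\mu-\e,\mu+\e)$ disjoint from $\sigma(H_0)\cup\sigma(H)$. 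Writing $(-\infty,\lambda)=(-\infty,\mu)\sqcup[\mu,\lambda)$, the difference $E((-\infty,\lambda);H)-E((-\infty,\lambda);H_0)$ equals $E((-\infty,\mu);H)-E((-\infty,\mu);H_0)$ plus $E([\mu,\lambda);H)-E([\mu,\lambda);H_0)$, and the latter has finite rank since $[\mu,\lambda)\subset[\lambda-\delta,\lambda+\delta]$. By the semi-boundedness in \eqref{a1a} there is $m$ with $\sigma(H_0)\cup\sigma(H)\subset[m,\infty)$, so I can take $\varphi\in C_c(\R)$ with $0\le\varphi\le1$, $\varphi\equiv1$ on $[m,\mu-\e]$ and $\varphi\equiv0$ on $[\mu+\e,\infty)$; this $\varphi$ coincides with $\chi_{(-\infty,\mu)}$ on $\sigma(H_0)$ and on $\sigma(H)$, so the spectral theorem gives $\varphi(H_0)=E((-\infty,\mu);H_0)$ and $\varphi(H)=E((-\infty,\mu);H)$, whence $E((-\infty,\mu);H)-E((-\infty,\mu);H_0)=\varphi(H)-\varphi(H_0)$ is compact because $\varphi\in\mathcal{C}$. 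Therefore $E((-\infty,\lambda);H)-E((-\infty,\lambda);H_0)$ is compact; in particular the pair is Fredholm and $\Xi(\lambda;H,H_0)$ exists. (Alternatively, one can avoid Stone--Weierstrass: for $\mu$ in a common spectral gap the parts of $\sigma(H_0)$ and $\sigma(H)$ below $\mu$ are bounded and bounded away from $\mu$, so $E((-\infty,\mu);H)-E((-\infty,\mu);H_0)$ is a Riesz contour integral of $(z-H)^{-1}-(z-H_0)^{-1}$ over a common rectangle, hence compact by the first step.)

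The only genuinely non-formal step is the first: extracting compactness of $(H-z)^{-1}-(H_0-z)^{-1}$ from the $H_0$-form compactness of $V$. Here a little care is needed that $(H_0+c)^{-1/2}$ maps $\calH$ onto the common form domain and that $H$ is legitimately represented as a form sum in the displayed manner, but this is classical (the KLMN theorem and its standard consequences for form perturbations). Everything afterwards is soft — functional calculus plus the elementary observation that, across a point outside the essential spectrum, the spectral projections of $H$ and $H_0$ differ only by a finite-rank operator.
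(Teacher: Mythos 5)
Your proof is correct, but it follows a genuinely different route from the paper's. The paper's argument is a one-step Riesz contour integral: it encloses the bounded set $(\sigma(H_0)\cup\sigma(H))\cap(-\infty,\lambda)$ by a compact contour $\Gamma$ avoiding both spectra (possible precisely because $\lambda\notin\sigma_\ess(H_0)=\sigma_\ess(H)$), writes the difference of spectral projections as $\frac{1}{2\pi i}\int_\Gamma\bigl(R_0(z)-R(z)\bigr)\,dz$, and reads off compactness of the integrand from the factorized iterated resolvent identity \eqref{a10a} together with \eqref{a8}--\eqref{a9}. You instead derive compactness of $R(z)-R_0(z)$ directly from the KLMN block formula at a single real point $-c$ and propagate it to all $z\in\C\setminus\R$ by resolvent identities, then run a Stone--Weierstrass argument to show $\varphi(H)-\varphi(H_0)$ is compact for every $\varphi\in C(\overline\R)$, and finally split the projection difference at $\lambda$ into the piece at a nearby common gap point $\mu$ (handled by a $C_c(\R)$ function interpolating $\chi_{(-\infty,\mu)}$ across the gap) plus a finite-rank piece over $[\mu,\lambda)$. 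Your route is longer, but it is self-contained, works directly from \eqref{a1a} without invoking the $G,J$ factorization, and yields the stronger by-product that $\varphi(H)-\varphi(H_0)$ is compact for all $\varphi\in C(\overline\R)$; the paper's contour approach, which you in fact record as an alternative in your closing parenthetical, is shorter once the resolvent machinery of Section~\ref{sec.a4a} is in place. The gap argument you use to choose $\mu$ and absorb $E([\mu,\lambda);\cdot)$ as finite rank is essentially the same observation the paper needs, implicitly, to guarantee that a suitable contour $\Gamma$ exists.
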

This proposition is almost obvious, but for the sake of completeness
we give the proof at the end of Section~\ref{sec.a4a}.

Below, assuming \eqref{a1a}, we briefly recall the basic properties of $\Xi(\lambda; H,H_0)$.
Most of these properties have appeared before in the 
literature in various guises (see e.g. \cite{Klaus, ADH,Hempel1,Hempel2,Sobolev,GMN,GM,BPR,ACDS,ACS,KMS,Hempel,Simon100DM})
and can be regarded as folklore; 
they were reviewed  and proven in a systematic fashion 
in \cite{Push3}. 

For any $\lambda\in\R$, the index $\Xi(\lambda;H,H_0)$ exists if and only 
if $\Xi(\lambda;H_0,H)$ exists and if both of these indices exist, we have
\begin{equation}
\Xi(\lambda;H,H_0)=-\Xi(\lambda;H_0,H).
\label{a5}
\end{equation}
If $[a,b]\cap\sigma_\ess(H_0)=\varnothing$, then 
\begin{equation}
\Xi(b;H,H_0)-\Xi(a;H,H_0)
=
N([a,b);H_0)-N([a,b);H).
\label{a5a}
\end{equation}
In particular, we get \eqref{a2a} for $\lambda<\inf\sigma_\ess(H_0)$.  
For any $\lambda\in\R$, if $\Xi(\lambda;H,H_0)$ exists then the
estimates
\begin{equation}
-\rank V_-\leq\Xi(\lambda;H,H_0)\leq \rank V_+,
\quad
V_\pm=\frac12(\abs{V}\pm V)
\label{a6a}
\end{equation}
hold true. In particular, 
$$
\pm V\geq 0\ \Longrightarrow \   \pm\Xi(\lambda;H,H_0)\geq0.
$$
The estimates \eqref{a6a} can be improved if $\lambda$ is not in the 
spectrum of $H_0$. Suppose that for some $a>0$, one has
$[\lambda-a,\lambda+a]\cap\sigma(H_0)=\varnothing$. Then 
\cite[Corollary 3.3]{Push3} one has
\begin{equation}
-N((-\infty,-a);V)\leq \Xi(\lambda;H,H_0)\leq N((a,\infty);V).
\label{a6b}
\end{equation}
Next, if $V$ is a trace class operator, then 
\begin{equation}
\Xi(\lambda; H,H_0)=\xi(\lambda; H,H_0), 
\quad \lambda\in\R\setminus\sigma_\ess(H_0), 
\label{a7}
\end{equation}
where $\xi(\lambda; H,H_0)$ is  M.~G.~Krein's  spectral shift function.
See e.g. \cite[Chapter~8]{Yafaev} for a survey of the spectral shift function theory. 
Note that \eqref{a7} is in general false for 
$\lambda\in\sigma_\ess(H_0)$, since $\Xi$ is integer
valued and $\xi$ is real valued. 

\begin{remark}\label{rmk.a2}
$\xi(\lambda;H,H_0)$ and $\Xi(\lambda;H,H_0)$ are, in fact, two different
regularisations of 
\begin{equation}
\Tr\bigl(E((-\infty,\lambda);H_0)-E((-\infty,\lambda);H))\bigr).
\label{a7a}
\end{equation}
By an example due to M.~G.~Krein \cite{Krein} 
(see also Section~\ref{sec.a5a} below), 
the difference of spectral projections in \eqref{a7a} may 
fail to belong to the trace class if $\lambda\in\sigma_\ess(H_0)$. 
Thus, the trace in \eqref{a7a} may not exist. 
The spectral shift function is the regularisation of \eqref{a7a} 
obtained by replacing 
the difference of spectral projections 
by $\varphi(H)-\varphi(H_0)$, where $\varphi$ is a smooth 
approximation of the characteristic function of 
$(-\infty,\lambda)$.  
The index $\Xi(\lambda;H,H_0)$ is obtained by replacing 
$\Tr$ by $\iindex$ in \eqref{a7a}. These two regularisations 
coincide in simplest cases but in general are distinct.
\end{remark}

Finally, for $\lambda\in\R\setminus\sigma_\ess(H_0)$, 
the index $\Xi(\lambda;H,H_0)$ coincides with the spectral flow 
(i.e. the net flux of eigenvalues) of the operator family
$\{H_0+\alpha V\}_{\alpha\in[0,1]}$ through $\lambda$
as $\alpha$ increases monotonically from $0$ to $1$; 
see e.g. \cite[Section~2.6]{Push3}.
The spectral flow is particularly easy to define when 
$V\geq0$ or $V\leq0$; in this case the eigenvalues of 
$H_0+\alpha V$ are monotone in $\alpha$ and the spectral
flow is simply the total number of eigenvalues that 
cross the point $\lambda$ as $\alpha$ increases 
from $0$ to $1$. In general, one has to count
the eigenvalues with the sign plus or minus depending 
on whether they cross $\lambda$ to the right or to the left. 
See \cite{Hempel} for a comprehensive survey of the 
spectral flow in perturbation theory. 
We will return to the subject of spectral flow in Section~\ref{sec.d}
in the context of unitary operators. 

\subsection{The sandwiched resolvents and the resolvent identities}\label{sec.a4a}
The Birman-Schwinger principle is most conveniently stated
if the perturbation $V$ is factorised. 
Let us assume that $V$ is represented as $V=G^*JG$, where
$G$ is an operator from $\calH$ to an auxiliary Hilbert
space $\calK$ and $J$ is an operator
in $\calK$. We assume that  
\begin{equation}
\begin{split}
&\text{$J=J^*$,  $J$ is bounded and has a bounded inverse, }
\\
&\text{$\Dom(H_0-aI)^{1/2}\subset\Dom G$  \  and \   
$G(H_0-aI)^{-1/2}$ is compact, $\forall a<\inf\sigma(H_0)$.}
\end{split}
\label{a8}
\end{equation}
These assumptions ensure 
(by the ``KLMN Theorem'', see e.g.  \cite[Theorem~X.17]{RS2})
that $V$ is $H_0$-form compact and $H$  
coincides with the form sum $H_0+V$. 
Thus, \eqref{a1a} follows from \eqref{a8}. 
In fact, \eqref{a8} is just another way of stating the 
assumption \eqref{a1a}. Indeed, assuming \eqref{a1a}, 
one can always take 
$\calK=\calH$, $G=\abs{V}^{1/2}$ 
and\footnote{Here and in what follows $\sign(x)=1$ for $x\geq0$ and $\sign(x)=-1$ for $x<0$. 
In particular, $\sign(V)$ has a bounded inverse.}
 $J=\sign(V)$ and then \eqref{a8} holds true.
In applications, the factorisation $V=G^*JG$ often arises naturally
due to the structure of the problem.

Note that since $H_0$ and $H$ have the same form domain, 
under the assumption \eqref{a8} we also have
\begin{equation}
\text{$\Dom(H-aI)^{1/2}\subset\Dom G$ \  and \ 
$G(H-aI)^{-1/2}$ is compact}
\label{a9}
\end{equation}
for any $a<\inf\sigma(H)$.

For $z\in\C\setminus\sigma(H_0)$, let us denote the resolvent of $H_0$ by 
$R_0(z)=(H_0-zI)^{-1}$; similarly, let  $R(z)=(H-zI)^{-1}$ 
for $z\in\C\setminus\sigma(H)$.
Let us
define the operators $T_0(z)$, $T(z)$ (sandwiched resolvents) formally 
by setting
$$
T_0(z)=GR_0(z)G^*, 
\quad 
T(z)=GR(z)G^*.
$$
More precisely, this means
\begin{align}
T_0(z)=G(H_0-aI)^{-1/2}(H_0-aI)R_0(z)(G(H_0-aI)^{-1/2})^*,
\quad a<\inf\sigma(H_0),
\label{a10}
\\
T(z)=G(H-aI)^{-1/2}(H-aI)R(z)(G(H-aI)^{-1/2})^*,
\quad a<\inf\sigma(H).
\label{a11}
\end{align}
By \eqref{a8}, \eqref{a9}, the operators
$T_0(z)$, $T(z)$ are compact. 
The operator $T_0(z)$ is self-adjoint for all  
$z\in\R\setminus\sigma(H_0)$ and 
$T(z)$ is self-adjoint for all $z\in\R\setminus\sigma(H)$.

For future reference, let us display the iterated resolvent identity for the 
operators $H_0$ and $H$: 
\begin{equation}
R(z)-R_0(z)
=
-(GR_0(\overline{z}))^*J(GR(z))
=-(GR_0(\overline{z}))^*(J-JT(z)J)(GR_0(z))
\label{a10a}
\end{equation}
and its direct consequence
\begin{equation}
(J^{-1}+T_0(z))(J-JT(z)J)
=
(J-JT(z)J)(J^{-1}+T_0(z))
=
I.
\label{a10b}
\end{equation}
From \eqref{a10a}, in particular, we easily obtain
\begin{proof}[Proof of Proposition~\ref{rmk.a1}]
Let $\Gamma$ be a compact positively oriented contour in $\C\setminus(\sigma(H_0)\cup\sigma(H))$
such that the bounded set $(\sigma(H)\cup\sigma(H_0))\cap(-\infty,\lambda)$ 
is contained inside $\Gamma$. 
Then 
$$
E((-\infty, \lambda);H)
-
E((-\infty, \lambda);H_0)
=
\frac1{2\pi i}\int_\Gamma ((R_0(z)-R(z))dz.
$$
From \eqref{a10a} and \eqref{a8}, \eqref{a9} it is easy to see that the operator 
in the r.h.s. is compact, as required.
\end{proof}

\subsection{The Birman-Schwinger principle}\label{sec.a4}
In what follows, we assume \eqref{a8}.
We first note that by Proposition~\ref{rmk.a1}, 
for all $\lambda\in\R\setminus\sigma(H_0)$ the indices
$\Xi(\lambda;H,H_0)$ and   
$\Xi(0;J^{-1}+T_0(\lambda),J^{-1})$ 
exist. 
\begin{proposition}\label{prp.a1}
Assume \eqref{a8}. Then 
\begin{align}
\dim\Ker(H-\lambda I)
&=
\dim\Ker(J^{-1}+T_0(\lambda)),
\qquad \forall \lambda\in\R\setminus\sigma(H_0),
\label{a11a}
\\
\Xi(\lambda;H,H_0)
&=
-\Xi(0;J^{-1}+T_0(\lambda); J^{-1}),
\quad \forall \lambda\in\R\setminus(\sigma(H_0)\cup\sigma(H)).
\label{a12}
\end{align}
In particular, in  the cases $J=I$ or $J=-I$, the identity \eqref{a12} can be written as
\begin{align}
\Xi(\lambda;H_0+G^*G,H_0)
&=
N((-\infty, -1); T_0(\lambda)),
\label{a12a}
\\
\Xi(\lambda;H_0-G^*G,H_0)
&=
-N((1,\infty); T_0(\lambda)).
\label{a12b}
\end{align}
\end{proposition}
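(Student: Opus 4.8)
The plan is to prove \eqref{a11a} by the classical Birman--Schwinger manipulation, then to deduce \eqref{a12} from it by identifying both sides with the spectral flow of two paths that have the same ``resonance set'', and finally to read off \eqref{a12a}--\eqref{a12b} by a short computation with the index of a pair of projections. For \eqref{a11a} I would fix $\lambda\in\R\setminus\sigma(H_0)$ and work on the scale of spaces generated by $(H_0-aI)^{1/2}$, on which $G^*\colon\calK\to\Dom(H_0-aI)^{-1/2}$ is bounded, $R_0(\lambda)$ is an isomorphism onto $\Dom(H_0-aI)^{1/2}=\Dom G$, and $GR_0(\lambda)G^*=T_0(\lambda)$ in the sense of \eqref{a10}. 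For $\psi\in\Ker(H-\lambda I)$ (so $\psi\in\Dom G$) the form identity $(H_0-\lambda I)\psi=-G^*JG\psi$ gives, after applying $R_0(\lambda)$ and then $G$, that $(J^{-1}+T_0(\lambda))(JG\psi)=0$; conversely, for $\phi\in\Ker(J^{-1}+T_0(\lambda))$ one checks that $\psi:=-R_0(\lambda)G^*\phi$ lies in $\Ker(H-\lambda I)$ and $JG\psi=\phi$. Using $\lambda\notin\sigma(H_0)$ and the invertibility of $J$, the maps $\psi\mapsto JG\psi$ and $\phi\mapsto-R_0(\lambda)G^*\phi$ are mutually inverse linear bijections between $\Ker(H-\lambda I)$ and $\Ker(J^{-1}+T_0(\lambda))$, which is \eqref{a11a}.

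For \eqref{a12} I would fix $\lambda\in\R\setminus(\sigma(H_0)\cup\sigma(H))$ and compare the two paths $\alpha\mapsto H_\alpha=H_0+\alpha V$ and $\alpha\mapsto B_\alpha=J^{-1}+\alpha T_0(\lambda)$, $\alpha\in[0,1]$. Applying \eqref{a11a} to the pair $H_0,H_\alpha$ with $\alpha J$ in place of $J$ (for $\alpha>0$; the case $\alpha=0$ being trivial) gives $\dim\Ker(H_\alpha-\lambda I)=\dim\Ker(\alpha^{-1}J^{-1}+T_0(\lambda))=\dim\Ker B_\alpha$, so $\lambda\in\sigma(H_\alpha)$ iff $0\in\sigma(B_\alpha)$, with equal multiplicities; since $\lambda$, resp. $0$, stays isolated from the (fixed) essential spectrum along the path, this resonance set $C\subset[0,1]$ is discrete by analytic perturbation theory. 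As the endpoints lie in the relevant resolvent sets ($\lambda\notin\sigma(H_0)\cup\sigma(H)$, and $0\notin\sigma(J^{-1})\cup\sigma(B_1)$ by \eqref{a11a}), the spectral-flow description of $\Xi$ recalled above identifies $\Xi(\lambda;H,H_0)$ with the spectral flow of $\{H_\alpha\}$ through $\lambda$ and $\Xi(0;B_1,J^{-1})$ with the spectral flow of $\{B_\alpha\}$ through $0$. The crux is to match these flows at a crossing $\alpha_*\in C$: for a simple branch $\mu(\alpha)$ with $\mu(\alpha_*)=\lambda$ and eigenvector $\psi$ one has $\mu'(\alpha_*)=\langle V\psi,\psi\rangle/\norm{\psi}^2$, while the paired eigenvector $\phi=\alpha_*JG\psi$ of $B_{\alpha_*}$ at $0$ (from \eqref{a11a}) satisfies, using $T_0(\lambda)\phi=-\alpha_*^{-1}J^{-1}\phi$ and $J^{-1}\phi=\alpha_*G\psi$,
$$
\nu'(\alpha_*)=\frac{\langle T_0(\lambda)\phi,\phi\rangle}{\norm{\phi}^2}=-\,\frac{\alpha_*\langle V\psi,\psi\rangle}{\norm{\phi}^2},
$$
so $\sign\nu'(\alpha_*)=-\sign\mu'(\alpha_*)$; hence $\mu$ crosses $\lambda$ precisely when $\nu$ crosses $0$, and in the opposite direction, so $\alpha_*$ contributes opposite amounts to the two spectral flows. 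Summing over $C$ gives $\Xi(\lambda;H,H_0)=-\Xi(0;B_1,J^{-1})$, i.e. \eqref{a12}. I expect the degenerate crossings---those with $\langle V\psi,\psi\rangle=0$, multiple eigenvalues at $\lambda$, or a permanent eigenvalue along a subinterval of $C$---to be the main obstacle: there one replaces the first-variation formula by the Puiseux expansions of Kato's analytic perturbation theory and compares orders of vanishing, the pairing of \eqref{a11a} still serving as the bookkeeping device, the rest being routine.

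Finally, \eqref{a12a}--\eqref{a12b} follow by a direct evaluation of the index of a pair of projections in which one projection is trivial. For $J=I$ we have $E((-\infty,0);J^{-1})=0$; writing $Q=E((-\infty,0);I+T_0(\lambda))$ and noting that $\spec Q\subset\{0,1\}$ and $0\notin\spec(I+T_0(\lambda))$ (by \eqref{a11a} and $\lambda\notin\sigma(H)$), formula \eqref{a4} gives $\iindex(0,Q)=-\dim\Ran Q=-N((-\infty,-1);T_0(\lambda))$, and \eqref{a12} turns this into \eqref{a12a}. For $J=-I$ we have $E((-\infty,0);J^{-1})=I$; with $Q=E((-\infty,0);-I+T_0(\lambda))$ and $1\notin\spec(T_0(\lambda))$ one gets $\iindex(I,Q)=\dim\Ran(I-Q)=N((1,\infty);T_0(\lambda))$, and \eqref{a12} gives \eqref{a12b}.
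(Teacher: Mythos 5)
The paper itself gives no proof of Proposition~\ref{prp.a1}; it refers to \cite{Birman,Schwinger,GM,Push3}, so I am comparing your argument against the standard literature rather than against a proof in the text.

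Your proof of \eqref{a11a} by exhibiting the mutually inverse maps $\psi\mapsto JG\psi$ and $\phi\mapsto -R_0(\lambda)G^*\phi$ between $\Ker(H-\lambda I)$ and $\Ker(J^{-1}+T_0(\lambda))$ is the classical Birman--Schwinger argument and is correct (the form-sum interpretation of $(H-\lambda I)\psi=0$ and the definition \eqref{a10} make the composition $GR_0(\lambda)G^*=T_0(\lambda)$ rigorous). The derivation of \eqref{a12a} and \eqref{a12b} from \eqref{a12} by evaluating $\iindex(P,Q)$ when $P\in\{0,I\}$ and $\sigma(Q)\subset\{0,1\}$ is also correct, including the use of \eqref{a11a} and $\lambda\notin\sigma(H)$ to replace the half-open by the open interval.

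For \eqref{a12}, however, there is a genuine gap. You identify both sides with spectral flows of the paths $H_\alpha=H_0+\alpha V$ through $\lambda$ and $B_\alpha=J^{-1}+\alpha T_0(\lambda)$ through $0$, and the Hellmann--Feynman computation $\nu'(\alpha_*)=-\alpha_*\langle V\psi,\psi\rangle/\norm{\phi}^2$ is right and does negate the slope for a \emph{simple, transverse} crossing. The trouble is that \eqref{a11a} furnishes only a pointwise isomorphism $\Ker(H_{\alpha_*}-\lambda I)\cong\Ker(B_{\alpha_*})$; it does not match up the analytic eigenvalue branches of the two families near $\alpha_*$. At a multiplicity-$k$ crossing, or a tangential one with $\langle V\psi,\psi\rangle=0$, the contribution to the spectral flow is determined by the Puiseux cycles of \emph{each} family separately, and there is no a priori correspondence between the two sets of branches that would let you ``compare orders of vanishing.'' Asserting that this is routine is a concession rather than an argument. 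To close the gap one needs an additional mechanism: e.g.\ a local finite-dimensional reduction that carries the bijection \eqref{a11a} to an isomorphism of the Kato reduced operators for the two families (so the signed counts agree with a sign flip), or a genericity-and-stability argument perturbing $T_0(\lambda)$ inside the class so that all crossings become simple and then invoking Lemma~\ref{lma.b2} — but the latter needs care because one must keep the two sides of \eqref{a12} coupled under the perturbation, which is exactly the identity being proved. A cleaner and more standard route, closer to \cite{GM,Push3}, is to avoid spectral flow altogether and deform the Fredholm pair of spectral projections directly using the factorized resolvent identity \eqref{a10a}--\eqref{a10b} and the stability Lemma~\ref{lma.b2}, which sidesteps branch-matching entirely.
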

Note that for $\lambda<\inf\sigma(H_0)$, formula
\eqref{a12b} is equivalent to \eqref{z2}. 

Formula \eqref{a12} has a long history starting from the celebrated 
papers by M.~Sh.~Birman~\cite{Birman} and J.~Schwinger
\cite{Schwinger} where it was stated in the form equivalent to \eqref{z2}. 
The identities \eqref{a12a}, \eqref{a12b} were extensively used 
(see e.g. \cite{Klaus,DH,Hempel1,ADH,Hempel2}) in 
the context of the spectral flow and also in 
\cite[Theorem~3.5]{Sobolev}
in the context of the spectral shift function theory (see \eqref{a7}).
The identity \eqref{a12}
as stated above, i.e. in terms of the index of a pair of 
projections, was proven in \cite{GM} 
in the context of the spectral shift function theory 
for trace class perturbations $V$. 
It was extended to the general case in \cite{Push3}.
\begin{remark*}
The right hand side of \eqref{a12} is not symmetric with respect
to the interchange of $H_0$ and $H$. However, 
under the assumptions of Proposition~\ref{prp.a1}
by writing $H=H_0-V$ and using \eqref{a5}, one also obtains 
$$
\Xi(\lambda;H,H_0)=\Xi(0;J^{-1}-T(\lambda); J^{-1}),
\quad \forall \lambda\in\R\setminus(\sigma(H_0)\cup\sigma(H)).
$$
\end{remark*}
Our main result
below is an extension of \eqref{a12}  to the case
when $\lambda$ belongs to the essential spectrum of $H_0$.

\subsection{Main result}\label{sec.a5}
As above, we assume that the perturbation $V$ is factorised 
as  $V=G^*JG$ 
with the properties \eqref{a8} and use the notation $T_0(z)$
for the sandwiched resolvent.   
Let $\Delta\subset\R$ 
be an open interval. Assume that 
\begin{equation}
\begin{split}
&\text{$T_0(z)$ is uniformly continuous in the operator norm}
\\
&\qquad \text{in the rectangle  $\Re z\in\Delta$, $\Im z\in(0,1)$.}
\end{split}
\label{a15}
\end{equation}
Of course, this trivially implies that the limit
$T_0(\lambda+i0)$
exists in the operator norm and is continuous in $\lambda\in\Delta$.
The operator $T_0(\lambda+i0)$ is compact and in general 
non-selfadjoint.  We denote
\begin{equation}
A_0(\lambda)=\Re T_0(\lambda+i0),
\quad
B_0(\lambda)=\Im T_0(\lambda+i0),
\label{a15a}
\end{equation}
where $\Re X=(X+X^*)/2$,
$\Im X=(X-X^*)/2i$. 
 We also set 
\begin{equation}
\calN=\{\lambda\in\Delta\mid 0\in\sigma(J^{-1}+A_0(\lambda))\}.
\label{a18}
\end{equation} 
Below is our main result. For the purposes of future 
reference, we break up the statement of this 
theorem into several parts. 
\begin{theorem}\label{thm.a1}
Assume \eqref{a8} and \eqref{a15}.
Then: 
\begin{enumerate}[\rm (i)]
\item
the set $\calN$ defined by \eqref{a18} is closed in $\Delta$
(i.e. $\Delta\setminus\calN$ is open); 
\item
for all $\lambda\in\Delta\setminus\calN$, 
the index $\Xi(\lambda;H,H_0)$ exists; 
\item
for all $\lambda\in\Delta\setminus\calN$, 
 the identity 
\begin{equation}
\Xi(\lambda;H,H_0)=-\Xi(0;J^{-1}+A_0(\lambda);J^{-1})
\label{a17}
\end{equation}
holds true;
\item
the index $\Xi(\lambda;H,H_0)$  is constant on every
connected component of the set $\Delta\setminus\calN$.
\end{enumerate}
\end{theorem}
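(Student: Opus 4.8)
The plan is to deduce the statement from the ``spectral-gap'' Birman--Schwinger identity of Proposition~\ref{prp.a1} by using the limiting absorption hypothesis \eqref{a15} to replace the (now non-existent) resolvents $R_0(\lambda)$, $R(\lambda)$ by their boundary values, localising near $\lambda$, and removing the resulting non-self-adjointness by a homotopy. \textbf{Part (i)} is a soft topological statement: by \eqref{a15} the map $\lambda\mapsto J^{-1}+A_0(\lambda)$ is norm-continuous from $\Delta$ into the bounded operators on $\calK$, and each $J^{-1}+A_0(\lambda)$ is a compact perturbation of the boundedly invertible operator $J^{-1}$, hence Fredholm of index zero; therefore $0\in\sigma(J^{-1}+A_0(\lambda))$ if and only if $J^{-1}+A_0(\lambda)$ fails to be invertible, and since the invertible operators form an open set, $\calN$ is relatively closed in $\Delta$.

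Next I would record a \emph{preliminary invertibility lemma}: for $\lambda\in\Delta\setminus\calN$ the operator $J^{-1}+T_0(\lambda+i0)=(J^{-1}+A_0(\lambda))+iB_0(\lambda)$ is boundedly invertible. Indeed $B_0(\lambda)=\Im T_0(\lambda+i0)\ge0$, being the norm-limit of $\Im T_0(\lambda+i\varepsilon)=\varepsilon\,GR_0(\lambda+i\varepsilon)R_0(\lambda+i\varepsilon)^*G^*\ge0$; so if $(J^{-1}+T_0(\lambda+i0))u=0$, taking imaginary parts of $\langle(J^{-1}+T_0(\lambda+i0))u,u\rangle$ gives $\langle B_0(\lambda)u,u\rangle=0$, hence $B_0(\lambda)u=0$, hence $(J^{-1}+A_0(\lambda))u=0$, hence $u=0$; being Fredholm of index zero and injective, the operator is invertible. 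By \eqref{a10b} and the norm-continuity of inversion it follows that $T(\lambda+i0)=J^{-1}-J^{-1}(J^{-1}+T_0(\lambda+i0))^{-1}J^{-1}$ exists in operator norm for $\lambda\in\Delta\setminus\calN$, and that $J^{-1}+T_0(z)$ is invertible with locally uniformly bounded inverse for $z$ in a neighbourhood of $\lambda$ in $\{\Im z\ge0\}$; in particular $H$, like $H_0$, obeys a limiting absorption principle on $\Delta\setminus\calN$, where its spectrum is purely absolutely continuous.

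\textbf{Parts (ii) and (iii)} are the heart of the matter. Fix $\lambda\in\Delta\setminus\calN$ and a bounded open interval $\Omega\ni\lambda$ with $\overline\Omega\subset\Delta\setminus\calN$, choose a smooth $\varphi$ equal to $1$ to the left of $\Omega$, to $0$ to the right of $\Omega$, with $\varphi'$ supported in $\Omega$, and set $\psi=\chi_{(-\infty,\lambda)}-\varphi$, which is supported in $\overline\Omega$. Then
$$
E((-\infty,\lambda);H)-E((-\infty,\lambda);H_0)=\bigl(\varphi(H)-\varphi(H_0)\bigr)+\bigl(\psi(H)-\psi(H_0)\bigr),
$$
where the first summand is compact by the $H_0$-form-compactness of $V$ (e.g.\ via the Helffer--Sjöstrand formula and \eqref{a10a}), so only the ``local'' term $\psi(H)-\psi(H_0)$ matters. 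Here compactness fails in general (Krein's example), so the Fredholm property is a genuine issue. Expressing $\psi(H)-\psi(H_0)$ by Stone's formula through $\tfrac1\pi\int_\Omega\psi(\mu)\,\Im\bigl(R(\mu+i0)-R_0(\mu+i0)\bigr)\,d\mu$, inserting \eqref{a10a} with the boundary values $T_0(\mu+i0)$ and $T(\mu+i0)=J^{-1}-J^{-1}(J^{-1}+T_0(\mu+i0))^{-1}J^{-1}$, and using the stationary eigenfunction-expansion operators $Z_0(\mu)$ of $H_0$ on $\Omega$ (normalised so that $\pi\,Z_0(\mu)^*Z_0(\mu)=B_0(\mu)$), one rewrites $E((-\infty,\lambda);H)-E((-\infty,\lambda);H_0)$, modulo compacts, in a form whose essential spectrum is controlled by the fibre scattering matrix $S(\lambda)$; since $S(\lambda)-I$ is compact and $-1\notin\sigma(S(\lambda))$ for $\lambda\notin\calN$, this yields $\{1,-1\}\cap\sigma_\ess\bigl(E((-\infty,\lambda);H)-E((-\infty,\lambda);H_0)\bigr)=\varnothing$, which is (ii). For (iii) one computes the index of this Fredholm pair: starting from \eqref{a12} in the trace-class case (where the boundary-value manipulations are licit and \eqref{a7} is available) and passing to general $V$ by approximation — using stability of the index of a Fredholm pair under norm-small perturbations together with the uniform invertibility of $J^{-1}+T_0(z)$ near $\lambda$ — and finally deforming $J^{-1}+A_0(\lambda)+iB_0(\lambda)$ to its self-adjoint part along $J^{-1}+A_0(\lambda)+itB_0(\lambda)$, $t\in[0,1]$, which stays invertible by the lemma, one arrives at the right-hand side $-\Xi(0;J^{-1}+A_0(\lambda);J^{-1})$ of \eqref{a17}.

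\textbf{Part (iv)} is then immediate: on $\Delta\setminus\calN$ the self-adjoint operator $J^{-1}+A_0(\lambda)$ is boundedly invertible and norm-continuous in $\lambda$, so $E((-\infty,0);J^{-1}+A_0(\lambda))$ is norm-continuous in $\lambda$ and the Fredholm pair $\bigl(E((-\infty,0);J^{-1}),E((-\infty,0);J^{-1}+A_0(\lambda))\bigr)$ has locally constant index; hence the right-hand side of \eqref{a17}, and with it $\Xi(\lambda;H,H_0)$, is constant on each connected component of $\Delta\setminus\calN$. The hard part is clearly (ii)--(iii): transporting the purely algebraic identity of Proposition~\ref{prp.a1}, valid where $R_0(\lambda)$ and $R(\lambda)$ are bounded operators, to the regime in which one controls only the sandwiched boundary values $T_0(\lambda+i0)$ in operator norm. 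Establishing that $E((-\infty,\lambda);H)-E((-\infty,\lambda);H_0)$ is Fredholm — which genuinely fails on $\calN$ — and pinning down its index via the stationary representation of the local term $\psi(H)-\psi(H_0)$ and a controlled trace-class approximation is where I expect essentially all the technical weight of the proof to lie.
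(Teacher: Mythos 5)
Your parts (i) and (iv), as well as the preliminary invertibility lemma (that $B_0(\lambda)\ge0$ implies $J^{-1}+T_0(\lambda+i0)$ invertible whenever $J^{-1}+A_0(\lambda)$ is), are correct and coincide with the paper's Lemma~\ref{lma.b1} and the last step of Section~\ref{sec.b}. For part (ii) the paper does not redo the Fredholm-property analysis: it simply cites Proposition~\ref{prp.b2} (from \cite{Push4}) and verifies its hypotheses, namely the existence in operator norm of $T(\lambda+i0)$ (via the resolvent identity \eqref{b4}) and of $\frac{d}{d\lambda}F_0(\lambda)$, $\frac{d}{d\lambda}F(\lambda)$ (via Lemma~\ref{lma.b3}). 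Your sketch of (ii) through a smooth/local cutoff $\chi_{(-\infty,\lambda)}=\varphi+\psi$ and stationary scattering theory is plausible in spirit but effectively re-proves the cited result rather than using it.

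For part (iii), which you correctly identify as the heart of the matter, your route diverges from the paper and has two genuine gaps. First, the trace-class approximation step is not fleshed out and looks problematic: replacing $G$ by a trace-class truncation $G_n$ changes $T_0$, and there is no reason why the uniform-continuity hypothesis \eqref{a15} (the limiting absorption principle) should survive the truncation, nor why the boundary values $T_0^{(n)}(\lambda+i0)$ should converge to $T_0(\lambda+i0)$ in norm. Without that, the ``stability of the index under norm-small perturbations'' argument has nothing to bite on. Second, and more seriously, the final homotopy along $J^{-1}+A_0(\lambda)+itB_0(\lambda)$, $t\in[0,1]$, is not a homotopy of a well-defined quantity: the index $\Xi(0;\,\cdot\,,J^{-1})$ is the index of a pair of spectral projections and is defined only for \emph{self-adjoint} arguments, whereas your path passes through non-self-adjoint operators for $t\in(0,1]$. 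You would need to reformulate the right-hand side in terms of some invariant (e.g.\ an analytic index or winding number) that makes sense along the whole path, and that is a nontrivial missing piece.

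The paper's actual strategy for (iii) is quite different and avoids both difficulties. After reducing to $\lambda=0$ (Theorem~\ref{thm.b1}), it introduces the local ``gap-creating'' deformation $\omega_s(x)=1-s\chi_\delta(x)$, sets $H(\omega_s)=H_0+G(\omega_s)^*JG(\omega_s)$ with $G(\omega_s)=G\,\omega_s(H_0)^{1/2}$, and shows (Lemma~\ref{lma.b10}) that $\|A_0(0;\chi_\delta)\|\to0$ as $\delta\to0$, so that for small $\delta$ the operator $J^{-1}+A_0(0;\omega_s)$ stays invertible for all $s\in[0,1]$. At $s=1$ the point $0$ lies in a genuine spectral gap of $H_0$ restricted to the reducing subspace $\Ran\,\omega_1(H_0)$, so the ordinary off-spectrum Birman--Schwinger principle (Proposition~\ref{prp.a1}) applies and gives \eqref{b18} directly, with no trace-class hypotheses and with a self-adjoint sandwiched resolvent. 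The bulk of the technical work is then Theorem~\ref{thm.b4}: proving that $E(\R_-;H(\omega_s))$ is norm-continuous in $s$ (using the representation of $\zeta(H_0)-\zeta(H(\omega_s))$ as $X_+Y(\omega_s)X_-^*$ conjugated by $\omega_s(H_0)^{1/2}$, and the boundedness of the Hilbert-transform-type operators $X_\pm$), after which Lemma~\ref{lma.b2} transports the identity from $s=1$ back to $s=0$. Both sides of the identity remain self-adjoint throughout the homotopy, which is exactly what your proposed deformation fails to achieve.
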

The proof is given in Sections~\ref{sec.b}--\ref{sec.c}. 
The proof uses Proposition~\ref{prp.a1} and a certain 
continuous deformation argument. Roughly speaking, 
we reduce Theorem~\ref{thm.a1} to Proposition~\ref{prp.a1}
by making an ``infinitesimal spectral gap'' in the 
spectrum of $H_0$ near $\lambda$. 

\begin{remarks*}
\begin{enumerate}[1.]
\item
The most important statement in Theorem~\ref{thm.a1} is part (iii). 
Part (i) is trivial, part (ii) follows from the results of \cite{Push4},
and part  (iv) is an easy consequence of part (iii).
\item
The existence of $\Xi(0;J^{-1}+A_0(\lambda),J^{-1})$ in the r.h.s. of 
\eqref{a17} follows from 
Proposition~\ref{rmk.a1} and from the fact that 
$A_0(\lambda)$ is compact.
\item
If $\lambda\in\R\setminus\sigma(H_0)$, 
then the hypothesis of Theorem~\ref{thm.a1} is trivially satisfied
(with $\Delta$ being a sufficiently small neighbourhood of $\lambda$)
and $T_0(\lambda+i0)$ is self-adjoint. 
Thus, in this case \eqref{a17} coincides with \eqref{a12}. 
\item
If $J=I$ or $J=-I$, then \eqref{a17} becomes
\begin{align*}
\Xi(\lambda;H_0+G^*G,H_0)&=N((-\infty,-1);A_0(\lambda)),
\\
\Xi(\lambda;H_0-G^*G,H_0)&=-N((1,\infty);A_0(\lambda)).
\end{align*}
In particular, we obtain \eqref{z1}. 
\item
Let $\Delta\subset\R\setminus\sigma(H_0)$. 
Then, by \eqref{a11a}, $\calN=\sigma(H)\cap\Delta$. 
Equivalently, $\calN$ is the set of all discontinuities (jumps) 
of $\Xi(\lambda;H,H_0)$ on $\Delta$. 
\end{enumerate}
\end{remarks*}
According to \eqref{a5a},  away from $\sigma_\ess(H_0)$
the jumps of the function $\Xi(\lambda;H,H_0)$ 
occur at the eigenvalues of $H_0$ and $H$.
Thus, one is tempted to interprete the jumps of $\Xi(\lambda;H,H_0)$ on the 
essential spectrum as  certain ``pseudo-eigenvalues'' of $H_0$ or $H$, 
depending on the sign of the jump. 
In the framework of Theorem~\ref{thm.a1} we see that these ``pseudo-eigenvalues''
can occur only at the points of the set $\calN$. 
In Section~\ref{sec.d}, we give an alternative description of these ``pseudo-eigenvalues''
in terms of the scattering matrix $S(\lambda)$ for the pair $H_0$, $H$.

\subsection{The set $\calN$: example}\label{sec.a5a}
The following example shows that the set $\calN$ can be 
quite large:  $\calN=\Delta$. 
In \cite{Krein}, M.~G.~Krein considered the operator $H_0$ 
in $L^2(0,\infty)$ with the integral kernel $H_0(x,y)$ given by 
$$
H_0(x,y)=
\begin{cases}
\sinh(x) e^{-y}, \quad x\leq y,
\\
\sinh(y) e^{-x}, \quad x\geq y
\end{cases}
$$
and the operator $H$ in the same Hilbert space with the integral 
kernel $H(x,y)=H_0(x,y)+e^{-x}e^{-y}$. 
Thus, $V=H-H_0$ is a rank one operator. 
In fact, $H_0=(h_0+I)^{-1}$ and $H=(h+I)^{-1}$, where
$h_0$ (resp. $h$) is the self-adjoint realisation of 
the operator
$-\frac{d^2}{dx^2}$ in $L^2(0,\infty)$ with the Dirichlet (resp. Neumann)
boundary condition at zero. 
In this example, $\sigma(H_0)=\sigma(H)=[0,1]$. 

M.~G.~Krein showed that for any $\lambda\in(0,1)$, 
the difference
$$
E((-\infty,\lambda);H)-E((-\infty,\lambda);H_0)
$$
does not belong to the Hilbert-Schmidt class. 
The more detailed analysis of \cite{KM} shows that for any $\lambda\in(0,1)$, 
$$
\sigma_\ess\bigl(E((-\infty,\lambda);H)-E((-\infty,\lambda);H_0)\bigr)=[-1,1]
$$
and so $\Xi(\lambda;H,H_0)$ does not exist for any $\lambda\in(0,1)$. 

In this example, the rank one perturbation $V$ can be factorised as 
$V=G^*G$, with $G:L^2(0,\infty)\to\C$, 
$Gf=\int_0^\infty f(x)e^{-x}dx$. 
Thus, the operator $T_0(z)$ reduces to a multiplication by a scalar in $\C$. 
Using the explicit formula for the resolvent of $h_0$, one easily checks that
$$
T_0(\lambda+i0)=-1+i\sqrt{\lambda^{-1}-1},
\quad
A_0(\lambda)=-1, \quad \forall\lambda\in(0,1),
$$
and therefore $\calN=\Delta$.

Considering rank one perturbations, it is not difficult to construct 
examples when the set $\calN$ has a more complex structure. 
We shall not pursue this direction here. 
On the other hand, Theorem~\ref{thm.a5} in the next subsection shows that 
in some situations of applied  interest, 
the set $\calN$ consists of isolated points.

\subsection{Application: Schr\"odinger operator}\label{sec.a8}
Let $H_0=-\Delta$ in $\calH=L^2(\R^d)$ with $d\geq1$ and 
let $H=H_0+V$ where $V$ is the operator of multiplication 
by a function (potential in physical terminology) $V:\R^d\to\R$. 
We assume that $V$ is 
a short range potential, i.e. 
\begin{equation}
\abs{V(x)}\leq C(1+\abs{x})^{-\rho}, \quad \rho>1.
\label{a19}
\end{equation}
Let us discuss the index function $\Xi(\lambda;H,H_0)$. 
For $\lambda<0$, this function
reduces to the eigenvalue counting function, see
\eqref{a2a}.
In order to analyse the index function for $\lambda>0$, let us apply 
Theorem~\ref{thm.a1}. 
Let $\calK=\calH$, $G=\abs{V}^{1/2}$, $J=\sign V$. 
Under the assumption \eqref{a19}, 
the hypotheses \eqref{a8} and \eqref{a15} are satisfied
with $\Delta=(\lambda_1,\lambda_2)$ for any 
$0<\lambda_1<\lambda_2<\infty$; 
see e.g. \cite[Theorem~XIII.33]{RS3}.
Thus, for any $\lambda>0$ formula \eqref{a17} 
holds true. 
The operator $A_0(\lambda)$ in this case is the self-adjoint
integral operator in $L^2(\R^d)$ with the kernel 
\begin{equation}
\abs{V(x)}^{1/2}\abs{V(y)}^{1/2}
\frac14 (2\pi)^{-\nu}k^{d-2}\frac{J_\nu(k\abs{x-y})}{(k\abs{x-y})^\nu}, 
\quad x,y\in\R^d,
\label{c16} 
\end{equation}
where $\nu=(d-2)/2$, 
$k=\sqrt{\lambda}>0$,
and $J_\nu$ is the Bessel function.
We have
\begin{theorem}\label{thm.a6}
Assume \eqref{a19}. 
For any $\lambda>0$, if $\Xi(\lambda;H,H_0)$ exists
then it satisfies the estimates
\begin{equation}
-N([1,\infty);A_0(\lambda))
\leq
\Xi(\lambda;H,H_0)
\leq
N((-\infty,-1];A_0(\lambda)).
\label{c17}
\end{equation}
Moreover, for all sufficiently large $\lambda>0$ 
the index $\Xi(\lambda;H,H_0)$ exists and equals zero. 
\end{theorem}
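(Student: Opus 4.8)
The plan is to deduce the two-sided estimate \eqref{c17} from part~(iii) of Theorem~\ref{thm.a1} together with the a priori bound \eqref{a6b}, and then to read off the vanishing of $\Xi(\lambda;H,H_0)$ at large energies from \eqref{c17} and the high-energy decay of the sandwiched resolvent. As in the excerpt one works with $\calK=\calH$, $G=\abs{V}^{1/2}$, $J=\sign V$, so that $J^{-1}=J$ and $\sigma(J)\subset\{-1,1\}$; moreover, as noted above, \eqref{a8} and \eqref{a15} hold on every $\Delta=(\lambda_1,\lambda_2)$ with $0<\lambda_1<\lambda_2$.

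First I would prove \eqref{c17} for $\lambda\in\Delta\setminus\calN$. Here $\Xi(\lambda;H,H_0)$ exists by Theorem~\ref{thm.a1}(ii), and \eqref{a17} reads $\Xi(\lambda;H,H_0)=-\Xi(0;J+A_0(\lambda);J)$. Apply \eqref{a6b} to the bounded self-adjoint pair $\wt H_0=J$, $\wt H=J+A_0(\lambda)$, whose difference $A_0(\lambda)$ is compact: since $[-a,a]\cap\sigma(J)=\varnothing$ for every $a\in(0,1)$, one obtains
\[
-N((a,\infty);A_0(\lambda))\leq\Xi(\lambda;H,H_0)\leq N((-\infty,-a);A_0(\lambda)),\qquad 0<a<1 .
\]
As the spectrum of the compact operator $A_0(\lambda)$ does not accumulate at $\pm1$, letting $a\uparrow1$ (the right- and left-hand sides being eventually constant) replaces the two counting functions by $N((-\infty,-1];A_0(\lambda))$ and $N([1,\infty);A_0(\lambda))$, which is exactly \eqref{c17}.

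It remains to treat those $\lambda\in\calN$ at which $\Xi(\lambda;H,H_0)$ nevertheless exists, and here I would argue by a limiting argument in the coupling constant. Replacing $V$ by $tV$ ($t>0$) replaces $J$ by $tJ$ and leaves $T_0(z)$, hence $A_0(\lambda)$, unchanged; moreover $\lambda\in\calN_t$ exactly when $0\in\sigma(J+tA_0(\lambda))$, i.e.\ when $-t^{-1}$ is an eigenvalue of the compact operator $JA_0(\lambda)$, so $\{t>0:\lambda\in\calN_t\}$ is discrete in $(0,\infty)$ and one can choose $t_n\to1$ from either side with $\lambda\notin\calN_{t_n}$. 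Applying the case already settled to the pair $H_0+t_nV,H_0$, and using once more that $\sigma(A_0(\lambda))$ does not accumulate at $\pm1$, yields for $n$ large $-N([1,\infty);A_0(\lambda))\leq\Xi(\lambda;H_0+t_nV,H_0)\leq N((-\infty,-1];A_0(\lambda))$. One then has to compare $H_0+t_nV$ with $H$: the natural route is the cocycle identity for the index of pairs of projections, $\Xi(\lambda;H,H_0)=\Xi(\lambda;H,H_0+t_nV)+\Xi(\lambda;H_0+t_nV,H_0)$, together with the fact that the value of $\Xi(\lambda;H_0+tV,H_0)$ at the coupling threshold $t=1$ is trapped between its one-sided limits as $t\to1^\pm$. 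The hard part will be precisely this last passage — from the two-sided estimates for $t\neq1$ to an estimate at $t=1$, across a value at which the difference of spectral projections need not vary norm-continuously (and at which even the Fredholm property of the relevant pair is not automatic); this is where I would expect to need the finer information behind part~(ii) of Theorem~\ref{thm.a1}.

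Finally, I would treat the ``moreover'' part. Writing $T_0(\lambda+i0)=\abs{V}^{1/2}(-\Delta-\lambda-i0)^{-1}\abs{V}^{1/2}$ and factoring $\abs{V}^{1/2}=\bigl(\abs{V}^{1/2}\langle x\rangle^{\rho/2}\bigr)\langle x\rangle^{-\rho/2}$, where the first factor is bounded by \eqref{a19}, one gets
\[
\norm{T_0(\lambda+i0)}\leq C\,\Norm{\langle x\rangle^{-\rho/2}(-\Delta-\lambda-i0)^{-1}\langle x\rangle^{-\rho/2}}\longrightarrow 0\qquad(\lambda\to+\infty)
\]
by the standard high-energy bound of the limiting absorption principle for $-\Delta$ (the weighted free resolvent with weight of order $>1/2$ has norm $O(\lambda^{-1/2})$; cf.\ \cite[Theorem~XIII.33]{RS3} and the references therein). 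Hence $\norm{A_0(\lambda)}\leq\norm{T_0(\lambda+i0)}<1$ for all large $\lambda$. Then $0\notin\sigma(J+A_0(\lambda))$, so $\lambda\notin\calN$ and $\Xi(\lambda;H,H_0)$ exists by Theorem~\ref{thm.a1}(ii); moreover $N([1,\infty);A_0(\lambda))=N((-\infty,-1];A_0(\lambda))=0$, and \eqref{c17} then forces $\Xi(\lambda;H,H_0)=0$.
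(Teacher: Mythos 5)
Your first and last paragraphs reproduce the paper's argument almost exactly: apply the a priori estimate \eqref{a6b} to the right-hand side of \eqref{a17} with the operators $J^{-1}=J$ and $J+A_0(\lambda)$, take $a\uparrow1$ using the discreteness of $\sigma(A_0(\lambda))$ away from zero, and for the "moreover" part invoke the high-energy decay \eqref{a20} of $\norm{T_0(\lambda+i0)}$ to get $\norm{A_0(\lambda)}<1$ so that $J+A_0(\lambda)=J(I+JA_0(\lambda))$ is invertible, whence $\lambda\notin\calN$, $\Xi$ exists, and both counting functions in \eqref{c17} vanish. All of that is correct and is the intended proof.

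The second paragraph, however, attacks a non-issue and gets stuck on it. You worry that \eqref{a17} is only available for $\lambda\in\Delta\setminus\calN$ and therefore try to reach the points of $\calN$ at which $\Xi$ might nonetheless exist by perturbing the coupling constant. But that set is empty: for the Schr\"odinger operator with a short-range potential the limits $T_0(\lambda+i0)$, $T(\lambda+i0)$ and the derivatives $\frac{d}{d\lambda}F_0(\lambda)$, $\frac{d}{d\lambda}F(\lambda)$ exist for every $\lambda>0$, so Proposition~\ref{prp.b2} (the result quoted from \cite{Push4}) applies and gives the \emph{equivalence} ``$\Xi(\lambda;H,H_0)$ exists $\Longleftrightarrow$ $J^{-1}+A_0(\lambda)$ is invertible,'' i.e.\ $\Longleftrightarrow\lambda\notin\calN$. (This is also implicit in the statement of Theorem~\ref{thm.d1}(i) together with \eqref{d11}.) Thus the hypothesis ``$\Xi(\lambda;H,H_0)$ exists'' already puts you in $\Delta\setminus\calN$, \eqref{a17} applies, and your first paragraph finishes the proof of \eqref{c17}. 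The coupling-constant deformation, the cocycle identity, and the one-sided limits as $t\to1^\pm$ can all be dropped; you correctly sensed that the passage across the threshold would be problematic, but the resolution is that the threshold case never arises under the hypothesis of the theorem.
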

\begin{proof}
Since $\sigma(J^{-1})=\{-1,1\}$, we can apply 
\eqref{a6b} to the r.h.s. of \eqref{a17} with any $a\in(0,1)$. 
This yields
$$
-N((a,\infty);A_0(\lambda))
\leq
\Xi(\lambda;H,H_0)
\leq
N((-\infty,-a);A_0(\lambda)).
$$
Taking $a\to1$, we obtain \eqref{c17}.

Next, 
under the assumption \eqref{a19}, one has
(see e.g. \cite[Problem 60, page 390]{RS3}):
\begin{equation}
\norm{T_0(\lambda+i0)}\to0 \quad \text{ as $\lambda\to+\infty$.}
\label{a20}
\end{equation}
Thus, for all sufficiently large $\lambda>0$ one has $\norm{A_0(\lambda)}<1$. 
For such $\lambda$, the operator $J^{-1}+A_0(\lambda)=J^{-1}(I+JA_0(\lambda))$ is invertible. 
Thus by Theorem~\ref{thm.a1}(ii) the index $\Xi(\lambda;H,H_0)$ exists. 
For such $\lambda$ we have
$$
N((-\infty,-1];A_0(\lambda))
=
N([1,\infty);A_0(\lambda))
=0
$$
and therefore by \eqref{c17} we get $\Xi(\lambda;H,H_0)=0$, as required.
\end{proof}
Theorem~\ref{thm.a6} can be combined with spectral estimates for $A_0(\lambda)$ 
to yield explicit bounds for $\Xi(\lambda;H,H_0)$ in terms of $V$. 
Let us give a simple example of such a bound. 
Let $d=3$. Then the integral kernel of $A_0(\lambda)$, $\lambda=k^2>0$, is 
$$
\abs{V(x)}^{1/2}\abs{V(y)}^{1/2}
\frac{\cos k\abs{x-y}}{4\pi \abs{x-y}}.
$$
Using the estimate
$$
N([1,\infty);\pm A_0(\lambda))\leq \norm{A_0(\lambda)}_2^2
$$
in terms of the Hilbert-Schmidt norm $\norm{\cdot}_2$, 
we obtain
$$
\abs{\Xi(\lambda;H,H_0)}
\leq 
\frac{1}{16\pi^2}\int_\R \int_\R 
\frac{\abs{V(x)}\abs{V(y)}}{\abs{x-y}^2}dx\, dy,
$$
whenever the integral in the r.h.s. converges.

Under additional assumptions on the potential $V$, one can ensure 
that the set $\calN$ is finite:
\begin{theorem}\label{thm.a5}
Assume that $\abs{V(x)}\leq \exp(-\gamma\abs{x})$ with some $\gamma>0$. 
Then the index $\Xi(\lambda;H,H_0)$ exists for all $\lambda\in\R\setminus \calN_0$,
where $\calN_0$ is a finite set.  
\end{theorem}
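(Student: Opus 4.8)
The plan is to use Theorem~\ref{thm.a1} to reduce the statement to the finiteness of the set $\calN$, to recognise $\calN$ as the non-invertibility set of an analytic family of Fredholm operators and apply the analytic Fredholm theorem, and finally to analyse the threshold $\lambda=0$ by a Feshbach--Grushin reduction. For $\lambda<0$ the index $\Xi(\lambda;H,H_0)$ exists by Proposition~\ref{rmk.a1} (here $\sigma_\ess(H_0)=[0,\infty)$), in particular at the finitely many negative eigenvalues of $H$. For $\lambda>0$ we use the factorisation of Section~\ref{sec.a8}, $\calK=\calH$, $G=\abs{V}^{1/2}$, $J=\sign V$; since $\abs{V(x)}\le e^{-\gamma\abs{x}}$ implies \eqref{a19}, the discussion of Section~\ref{sec.a8} shows that the hypotheses \eqref{a8} and \eqref{a15} hold on every interval $(\lambda_1,\lambda_2)$ with $0<\lambda_1<\lambda_2<\infty$, and Theorem~\ref{thm.a1}(ii) then shows that $\Xi(\lambda;H,H_0)$ exists for every $\lambda>0$ with $0\notin\sigma\bigl(J^{-1}+A_0(\lambda)\bigr)$. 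Thus, writing $\calN=\{\lambda>0\mid 0\in\sigma(J^{-1}+A_0(\lambda))\}$, it suffices to prove that $\calN$ is finite; then $\calN_0=\{0\}\cup\calN$ does the job.

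The role of the exponential decay is that $\lambda\mapsto A_0(\lambda)=\Re T_0(\lambda+i0)$ extends from $(0,\infty)$ to an operator-norm holomorphic, indeed Hilbert--Schmidt valued, function on a complex neighbourhood $D$ of $(0,\infty)$. Indeed $A_0(\lambda)$ is the integral operator with kernel $\abs{V(x)}^{1/2}\abs{V(y)}^{1/2}\,\rho\bigl(\sqrt{\lambda},\abs{x-y}\bigr)$, where $\rho(k,r)$ is analytic in $k$ and of growth at most $e^{\abs{\Im k}\,r}$ times a polynomial in $1/(kr)$; since $\abs{V(x)}^{1/2}\abs{V(y)}^{1/2}\le e^{-\gamma(\abs{x}+\abs{y})/2}$ and $\abs{x-y}\le\abs{x}+\abs{y}$, this kernel lies in $L^2(\R^d\times\R^d)$, with locally bounded and holomorphically varying $L^2$-norm, as long as $\abs{\Im\sqrt{\lambda}}<\gamma/2$, which defines $D$. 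On $D$ the family $I+JA_0(\lambda)=J\bigl(J^{-1}+A_0(\lambda)\bigr)$ is an analytic family of operators of the form ``identity plus compact'', and each of them is Fredholm because $\sigma_\ess\bigl(J^{-1}+A_0(\lambda)\bigr)\subset\{-1,1\}$. By the analytic Fredholm theorem, either $I+JA_0(\lambda)$ is non-invertible throughout $D$, or it is invertible off a discrete subset of $D$. The first alternative is ruled out by \eqref{a20}: $\norm{A_0(\lambda)}<1$ for large real $\lambda$, so $I+JA_0(\lambda)$ is invertible there (this is the argument of Theorem~\ref{thm.a6}, and it also shows that $\calN$ does not accumulate at $+\infty$). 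Hence $\calN$ is discrete in $D\cap\R$, in particular has no accumulation point in $(0,\infty)$, and the only remaining possibility to be excluded is that $\calN$ accumulates at $\lambda=0$.

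When $d$ is odd, the real part of the free resolvent kernel is an even entire function of $k=\sqrt{\lambda}$ — for instance $\cos(k\abs{x-y})/(4\pi\abs{x-y})$ in $d=3$ — so $A_0(\lambda)$ is in fact holomorphic in $\lambda$ on a neighbourhood of $[0,\infty)$; rerunning the previous paragraph on that larger neighbourhood finishes the proof, and for even $d\ge4$, where the kernel is still continuous up to $\lambda=0$, the argument below applies as well. The delicate case is $d=2$, where $A_0(\lambda)=-\tfrac{\ln\lambda}{4\pi}K(\lambda)+C_0(\lambda)$ with $K(\lambda),C_0(\lambda)$ holomorphic in $\lambda$ near $0$ and $K(0)=\langle\,\cdot\,,g\rangle g\ge0$ of rank one, $g=\abs{V}^{1/2}\in L^2(\R^2)$. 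I would then carry out a Feshbach--Grushin reduction with respect to the rank-one, holomorphic top eigenprojection $P(\lambda)$ of $K(\lambda)$: the $P(\lambda)$-block of $J^{-1}+A_0(\lambda)$ is the scalar $-\tfrac{\ln\lambda}{4\pi}\kappa(\lambda)+\mathrm{O}(1)$ with $\kappa(\lambda)\to\int\abs{V}>0$, hence tends to $+\infty$ and is invertible for small $\lambda>0$. Therefore $0\in\sigma\bigl(J^{-1}+A_0(\lambda)\bigr)$ is equivalent to the vanishing of the Schur complement on $\Ran(I-P(\lambda))$, a self-adjoint Fredholm operator which tends, as $\lambda\to0$, to a fixed Fredholm operator; iterating the reduction finitely many times on the finite-dimensional kernel of this limit yields a scalar (or a small self-adjoint matrix) whose entries possess an asymptotic expansion in the scale of powers of $\lambda$ and of $1/\ln\lambda$. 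This reduced quantity cannot vanish identically near $0$ — otherwise $\calN$ would contain an interval, contradicting the discreteness established above — so its leading term is nonzero for small $\lambda>0$, and $\calN$ does not accumulate at $0$. I expect this last step, $d=2$, to be the main obstacle: one must make the polyhomogeneous (powers of $\lambda$ and $\ln\lambda$) expansion of the sandwiched free resolvent explicit enough, and follow it through the Feshbach reduction, to be sure that the reduced quantity whose zeros make up $\calN\cap(0,\delta)$ is genuinely non-trivial as $\lambda\to0$.
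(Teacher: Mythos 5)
Your broad strategy coincides with the paper's: reduce via Theorem~\ref{thm.a1} and Proposition~\ref{rmk.a1} to the finiteness of $\calN\subset(0,\infty)$, use the exponential decay to make $A_0$ analytic and Hilbert--Schmidt on a complex neighbourhood of $(0,\infty)$, and seed the analytic Fredholm alternative with the large-$\lambda$ invertibility from \eqref{a20} (as in Theorem~\ref{thm.a6}). The point of divergence is the threshold $\lambda=0$. The paper disposes of it in one line: working in $k=\sqrt{\lambda}$ and quoting \eqref{c16}, it observes that $z^{-\nu}J_\nu(z)$ is entire and concludes that $A_0(k^2)$ is analytic on the full strip $\abs{\Im k}<\gamma/2$ for every $d\ge2$ (with a simple pole at $k=0$ when $d=1$), so the Fredholm alternative runs on a neighbourhood of $[0,\infty)$ in the $k$-plane and nothing needs to be checked separately at $\lambda=0$. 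You reach the same conclusion only for odd $d$ and flag a $\log\lambda$ singularity for $d=2$, proposing (but not completing) a Feshbach--Grushin reduction there.

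Your concern is in fact legitimate, because \eqref{c16} appears to be mislabelled. Substituting $d=3$, $\nu=1/2$, $J_{1/2}(z)=\sqrt{2/(\pi z)}\,\sin z$ into \eqref{c16} gives $\sin(k\abs{x-y})/(4\pi\abs{x-y})$, whereas the paper itself states two lines later that for $d=3$ the kernel of $A_0(\lambda)$ is $\cos(k\abs{x-y})/(4\pi\abs{x-y})$. Since $\Re\bigl[\tfrac{i}{4}H^{(1)}_\nu\bigr]=-\tfrac14 Y_\nu$ and $\Im\bigl[\tfrac{i}{4}H^{(1)}_\nu\bigr]=\tfrac14 J_\nu$, formula \eqref{c16} is really the kernel of $B_0=\Im T_0(\lambda+i0)$; the kernel of $A_0=\Re T_0(\lambda+i0)$ carries $-Y_\nu$, and for integer $\nu$ (even $d$) the Neumann function produces a $k^{d-2}\log k$ term, divergent at $k=0$ precisely when $d=2$. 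So the paper's threshold analyticity claim is valid for odd $d$ but not as stated for even $d$; with the corrected kernel the $d=1$ case becomes $-\sin(kr)/(2k)$, entire and even in $k$, so there is no pole there either. That said, your own proposal does not yet close the even-$d$ case: the $d=2$ Feshbach--Grushin step is left as a sketch and acknowledged as ``the main obstacle,'' and the assertion that for even $d\ge4$ ``the kernel is still continuous up to $\lambda=0$, the argument below applies as well'' is not adequate --- continuity alone does not force the zeros of $J^{-1}+A_0(\lambda)$ to be isolated at $\lambda=0$, and for $d\ge4$ there is no divergent block to Feshbach out; one would have to exploit the full polyhomogeneous expansion in $\lambda$ and $\lambda^{(d-2)/2}\log\lambda$. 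In short, you have correctly diagnosed a real gap in the paper's proof for even $d$, but your own repair is incomplete.
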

\begin{proof}
By Proposition~\ref{rmk.a1}, the index $\Xi(\lambda;H,H_0)$ 
exists for all $\lambda<0$. By Theorem~\ref{thm.a1}, it suffices to prove
that $I+JA_0(\lambda)$ is invertible for all $\lambda>0$ apart from 
a finite set. 
Let us use formula \eqref{c16}. 
It is well known that $z^{-\nu}J_\nu(z)$ is an entire function of $z$ which 
obeys 
$$
\Abs{z^{-\nu}J_\nu(z)}\leq 
\frac{\exp({\abs{\Im z}})}{2^\nu\Gamma(\nu+1)},
\quad
\nu\geq -1/2.
$$
It follows that the operator $A_0(k^2)$ 
is analytic in $k$ for $\abs{\Im k}<\gamma/2$ and $d\geq2$. 
For $d=1$, the operator $A_0(k^2)$ is analytic in $k$ for 
$\abs{\Im k}<\gamma/2$, $k\not=0$ and has a single pole at $k=0$.
By \eqref{a20}, the operator $I+JA_0(\lambda)$ is invertible for all 
sufficiently large $\lambda$. 
By the analytic Fredholm alternative,
we see that $I+JA_0(\lambda)$ is invertible for all but finitely many 
$\lambda>0$. 
\end{proof}

\section{$\Xi$ and the scattering matrix}\label{sec.d}

Below we recall the definition of the scattering matrix $S(\lambda)$ for the pair 
$H_0$, $H$ and define the \emph{spectral flow}  $\mu(e^{i\theta};\lambda)$  
of the scattering matrix. 
Next, we establish a formula \eqref{d8} which relates 
$\Xi(\lambda;H,H_0)$ 
and the spectral flow.
This formula allows one to describe  the jumps of $\Xi(\lambda;H,H_0)$ 
in terms of the spectrum of the scattering matrix. 

For the purposes of simplicity and clarity, we restrict the discussion in this
section to the case of the Schr\"odinger operator. 
However, the construction of this section can be extended to a much wider 
setting, see Remark~\ref{rmk.d2}.
The proof of Theorem~\ref{thm.a1} does not use the material of this section.

\subsection{The spectral flow for unitary operators}\label{sec.d0}
We start by defining the spectral flow of a family of unitary operators in an 
abstract setting. 
Let $U=U(t)$, $t\in[a,b]$, be a family of unitary operators in a Hilbert space such that 
$U(t)$ depends continuously on $t$ in the operator norm and such 
that $U(t)-I$ is compact for all $t$. 
Since $U(t)$ is unitary, the spectrum of $U(t)$ is a subset of the unit circle 
$\mathbb T$. Since $U(t)-I$ is compact, the spectrum of 
$U(t)$ away from $1$ consists of eigenvalues of finite multiplicities;  the 
only possible point of  accumulation of these eigenvalues is $1$. 

Let us recall the definition of the spectral flow of the family $\{U(t)\}_{t\in[a,b]}$. 
The spectral flow is an integer valued function $\mu$ 
on $\mathbb T\setminus\{1\}$. The naive definition of the spectral flow is 
\begin{multline}
\mu(e^{i\theta}; \{U(t)\}_{t\in[a,b]})=
\\
\langle \text{the number of eigenvalues of $U(t)$ which cross
$e^{i\theta}$ in the anti-clockwise direction}\rangle
\\
-
\langle \text{the number of eigenvalues of $U(t)$ which cross
$e^{i\theta}$ in the clockwise direction}\rangle,
\label{d4}
\end{multline}
as $t$ increases monotonically from $a$ to $b$. 
Here $\theta\in(0,2\pi)$ and the eigenvalues
are counted with multiplicities taken into account. 
The eigenvalues of $U(t)$ may cross $e^{i\theta}$ infinitely many times, 
and thus the above naive definition needs to be replaced by a more robust one.
Below we describe one of such possible regularisations. 
 
Let us introduce some notation for the eigenvalue counting function of a 
unitary operator.  
For $\theta_1,\theta_2\in(0,2\pi)$ denote 
$$
N(e^{i\theta_1},e^{i\theta_2}; U(t))
=
\sum_{\theta\in[\theta_1,\theta_2)}
\dim\Ker(U(t)-e^{i\theta}I)
$$
if $\theta_1<\theta_2$ and 
$$
N(e^{i\theta_1},e^{i\theta_2}; U(t))
=
-N(e^{i\theta_2},e^{i\theta_1}; U(t))
$$
if $\theta_1>\theta_2$. Assume first that there exists $\theta_0\in(0,2\pi)$ 
such that $e^{i\theta_0}\notin\sigma(U(t))$ for all $t\in[a,b]$. 
Then one can define the spectral flow of the family $\{U(t)\}_{t\in[a,b]}$ 
by 
\begin{equation}
\mu(e^{i\theta};\{U(t)\}_{t\in[a,b]})
=
N(e^{i\theta},e^{i\theta_0};U(b))
-
N(e^{i\theta},e^{i\theta_0};U(a)).
\label{d5}
\end{equation}
It is evident that this definition is independent of the choice of $\theta_0$ and 
agrees with the naive definition \eqref{d4} whenever the latter makes sense.

In general, $\theta_0$ as above may not exist. 
However, by a compactness argument one can always find 
the values $a=t_0<t_1<\dots<t_n=b$ 
such that 
for each of the subintervals $\Delta_i=[t_{i-1},t_i]$, 
a point $\theta_0$  with the required properties
can be found. 
Thus, the spectral flow of each of the corresponding families
$\{U(t)\}_{t\in\Delta_i}$ is well defined. Now one can set
\begin{equation}
\mu(e^{i\theta};\{U(t)\}_{t\in[a,b]})
=
\sum_{i=1}^n
\mu(e^{i\theta}; \{U(t)\}_{t\in\Delta_i}).
\label{d5a}
\end{equation}
It is not difficult to see that the above definition is independent 
on the choice of the subintervals $\Delta_i$ and agrees with the naive 
definition \eqref{d4}.

\subsection{The scattering matrix}\label{sec.d1}
Throughout the rest of this section, we assume that 
$\calH=L^2(\R^d)$ and let $H_0=-\Delta$ and $H=H_0+V$ be as in Section~\ref{sec.a8}, 
where $V$ satisfies the short range assumption \eqref{a19}.
Let us recall the definition of the scattering matrix 
$S(\lambda)$ for the pair $H_0$, $H$; see e.g. \cite{Yafaev}.
If the potential $V$ is short range \eqref{a19}, then the wave operators 
$$
W_\pm=
\slim_{t\to\pm\infty}
e^{itH}e^{-itH_0}
$$
exist and are asymptotically complete. This means that
the singular continuous spectrum of $H$ is absent and 
$\Ran W_+=\Ran W_-=\calH_{pp}(H)^\bot$, 
where  $\calH_{pp}(H)\subset\calH$ is the subspace 
spanned by the eigenfunctions of $H$. 
The scattering operator $\mathbf S=W_+^*W_-$ 
is unitary in $\calH$ and commutes
with $H_0$.

Consider the map $\calF:L^2(\R^d)\to L^2((0,\infty);L^2(\mathbb S^{d-1}))$
(here  $\mathbb S^0=\{-1,1\}$), which for $f\in L^1(\R^d)$ is defined by 
$$
(\mathcal F f)(\lambda;\omega)
=
2^{-1/2}\lambda^{(d-2)/4}(2\pi)^{-d/2}
\int_{\R^d} f(x) e^{-i\sqrt{\lambda}(x,\omega)}dx, 
\qquad \lambda>0, \quad \omega\in\mathbb S^{d-1}.
$$
This map is unitary and diagonalises $H_0$:
$$
(\mathcal F H_0 f)(\lambda;\omega)
=
\lambda(\mathcal F f)(\lambda; \omega),
\quad 
\forall f\in C_0^\infty(\R^d).
$$
Since $\mathbf S$ commutes with $H_0$, the operator $\mathcal F$ also diagonalises $\mathbf S$;
i.e. there exists a family of unitary operators $S(\lambda)$, $\lambda>0$ 
in $L^2(\mathbb S^{d-1})$
such that
$$
(\mathcal F \mathbf S f)(\lambda;\cdot)
=
S(\lambda)f(\lambda;\cdot).
$$
The operator $S(\lambda)$ is called the scattering matrix for 
the pair $H_0$, $H$. 
It is well known that $S(\lambda)$ 
depends continuously on $\lambda>0$ in the operator norm, 
$S(\lambda)-I$ is a compact operator for all $\lambda>0$  and 
$\norm{S(\lambda)-I}\to0$ as $\lambda\to+\infty$.

Fix $\lambda_0>0$ and consider the 
family of unitary operators $\{S(\lambda)\}_{\lambda\in[\lambda_0,\infty]}$,
where $S(\infty)$ is defined as the identity operator. 
By the properties of the scattering matrix, this is a norm continuous 
family, the operator $S(\lambda)-I$ is compact for all $\lambda$ and so 
the spectral flow of this family is well defined.  
Of course, the non-compactness of the interval $[\lambda_0,\infty]$ 
does not cause any problem since $\norm{S(\lambda)-I}\to0$ as $\lambda\to+\infty$. 
We denote 
\begin{equation}
\mu(e^{i\theta};\lambda_0)
=
-\mu(e^{i\theta};\{S(\lambda)\}_{\lambda\in[\lambda_0,\infty]})
\label{d12}
\end{equation}
for all $\theta\in(0,2\pi)$. 
The minus sign here is introduced in order to make the above definition
consistent with the notation of \cite{Push2}.

\begin{theorem}\label{thm.d1}
Let $H_0$ and $H$ be as above; assume \eqref{a19}. 
Then:
\begin{enumerate}[\rm (i)]
\item
for any $\lambda>0$, the index $\Xi(\lambda;H,H_0)$ exists if and only if
$-1\notin\sigma(S(\lambda))$;
\item
for any $\lambda>0$, if  the index $\Xi(\lambda;H,H_0)$ exists then 
the identity 
\begin{equation}
\Xi(\lambda; H,H_0)=-\mu(-1;\lambda)
\label{d8}
\end{equation} 
holds true. 
\end{enumerate}
\end{theorem}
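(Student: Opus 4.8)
The plan is to derive Theorem~\ref{thm.d1} from our main result Theorem~\ref{thm.a1} by identifying the spectral flow of the scattering matrix with a suitable version of the index $\Xi$ built out of the sandwiched resolvent. The key bridge is the stationary representation of the scattering matrix: with the factorisation $V=G^*JG$, $G=\abs{V}^{1/2}$, $J=\sign V$, one has the classical formula
\[
S(\lambda)=I-2\pi i \,\Gamma_0(\lambda) G^*\bigl(J - J T(\lambda+i0) J\bigr) G \Gamma_0(\lambda)^*,
\]
where $\Gamma_0(\lambda)\colon \calH\to L^2(\mathbb S^{d-1})$ is the trace operator onto the energy surface, so that $\Gamma_0(\lambda)^*\Gamma_0(\lambda)=B_0(\lambda)/\pi$ (this is exactly the content of $\Im T_0(\lambda+i0)=B_0(\lambda)$, written in the ``sandwiched'' variables). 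Using the resolvent identity \eqref{a10b}, $J-JT(\lambda+i0)J=(J^{-1}+T_0(\lambda+i0))^{-1}=(J^{-1}+A_0(\lambda)+iB_0(\lambda))^{-1}$, so $S(\lambda)$ is a (unitary) function of the compact operator $T_0(\lambda+i0)$ together with $B_0(\lambda)$. This already gives part~(i): $-1\in\sigma(S(\lambda))$ means $1\in\sigma\bigl(\Gamma_0(\lambda)G^*(J-JT(\lambda+i0)J)G\Gamma_0(\lambda)^*/(\pi i)^{-1}\cdot\ldots\bigr)$, which after the algebra is equivalent to $0\in\sigma(J^{-1}+A_0(\lambda))$, i.e.\ to $\lambda\in\calN$; and by Theorem~\ref{thm.a1}(ii) together with the part~(ii) remark attributed to \cite{Push4}, existence of $\Xi(\lambda;H,H_0)$ is exactly the condition $\lambda\notin\calN$.

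For part~(ii) I would argue by a deformation/homotopy in the spectral parameter. Fix $\lambda>0$ with $-1\notin\sigma(S(\lambda))$. By \eqref{a20} we have $\norm{T_0(\mu+i0)}\to0$, hence $S(\mu)\to I$ and $A_0(\mu)\to0$ as $\mu\to+\infty$; in particular for large $\mu$ both sides of \eqref{d8} vanish (the right side because $-1\notin\sigma(S(\mu))$ near infinity so $\mu(-1;\mu)=0$ by the very definition \eqref{d12} with $S(\infty)=I$; the left side by Theorem~\ref{thm.a6}). The strategy is then to track how each side changes as the spectral parameter runs from $\lambda$ to $+\infty$. On the left, by Theorem~\ref{thm.a1}(iv) the function $\mu\mapsto\Xi(\mu;H,H_0)$ is locally constant off $\calN$, and at a point $\lambda_*\in\calN$ its jump is computed from \eqref{a17} as the change of $-\Xi(0;J^{-1}+A_0(\mu);J^{-1})$ across $\lambda_*$, i.e.\ governed by the eigenvalues of $J^{-1}+A_0(\mu)$ passing through $0$. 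On the right, by the definition \eqref{d5}--\eqref{d5a} of the spectral flow, $\mu\mapsto\mu(-1;\mu)$ is locally constant off $\calN$ as well (since $-1\in\sigma(S(\mu))\iff\mu\in\calN$) and jumps at $\lambda_*\in\calN$ by $\mp$(the net number of eigenvalues of $S(\mu)$ crossing $-1$). The heart of the matter is the \textbf{local jump computation}: using the representation of $S(\mu)$ above, an eigenvalue of $S(\mu)$ crosses $-1$ precisely when an eigenvalue of $J^{-1}+A_0(\mu)$ crosses $0$, and the \emph{direction} of rotation on $\mathbb T$ is tied to the \emph{sign} of the derivative of that eigenvalue of $J^{-1}+A_0(\mu)$ through the positivity of $B_0(\mu)$ restricted to the relevant spectral subspace. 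This is the standard ``$B_0\ge 0$ forces monotone unitary rotation'' mechanism (as in \cite{Push2}); quantitatively, near a crossing one may reduce to finite rank and compute with $S=\frac{J^{-1}+A_0-iB_0}{J^{-1}+A_0+iB_0}$-type Cayley transforms, where $B_0\ge 0$ pins the chirality. Matching the two jump contributions, together with the common value $0$ at $\mu=+\infty$, yields $\Xi(\lambda;H,H_0)=-\mu(-1;\lambda)$.

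The main obstacle I anticipate is precisely this local jump computation at points of $\calN$, where $B_0(\lambda)$ need \emph{not} be invertible (in fact in dimension $d=1$ it is finite rank): one must show that the kernel directions of $B_0(\lambda)$ contribute to the kernel of $I+S(\lambda)$ but carry no spectral flow (they correspond to $\pm1$ eigenvalues of the difference of projections staying put, not crossing), so that only the ``transversal'' part of the crossing counts, and there $B_0\ge0$ gives the sign. Handling the possible non-invertibility of $B_0(\lambda)$ cleanly — rather than assuming $B_0(\lambda)>0$ — is the delicate point, and I would treat it by passing to the finite-dimensional reduction of the Birman--Krein-type identity on $\Ker(J^{-1}+A_0(\lambda))^{\perp}$, or equivalently by invoking the already-established Proposition~\ref{prp.a1} off the essential spectrum and then using the norm-continuity hypothesis \eqref{a15} to push the identity onto $\sigma_\ess(H_0)$ by the same ``infinitesimal spectral gap'' deformation used in the proof of Theorem~\ref{thm.a1}. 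A secondary technical point is the non-compact endpoint $\mu=+\infty$, but this is harmless thanks to \eqref{a20} and the fact that $S(\infty)=I$ by fiat.
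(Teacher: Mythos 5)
Your overall strategy is the same as the paper's: compare both sides of \eqref{d8} as the spectral parameter runs from $\lambda$ to $+\infty$, use $\norm{S(\mu)-I}\to0$ and Theorem~\ref{thm.a6} to pin both sides at $0$ at infinity, and match the changes in between. You also correctly identify that the stationary representation of $S(\lambda)$ in terms of $J^{-1}+T_0(\lambda+i0)$ is the bridge between the two sides. But there is a genuine gap at the step you yourself flag as ``the heart of the matter.''

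The paper does not perform a local jump analysis at crossings at all; instead it imports a \emph{global} eigenvalue-counting identity from \cite{Push2}: Lemma~5.1 there gives
$\dim\Ker(S(\lambda)-e^{i\theta}I)=\dim\Ker\bigl(J^{-1}+A_0(\lambda)+\cot(\theta/2)B_0(\lambda)\bigr)$
for \emph{all} $\theta\in(0,2\pi)$ (not only $\theta=\pi$), and Lemma~5.3 upgrades this to the counting identity
$N(e^{i\theta_1},e^{i\theta_2};S(\lambda))=F(\lambda,\theta_1)-F(\lambda,\theta_2)$
whenever $e^{i\theta_1},e^{i\theta_2}\notin\sigma(S(\lambda))$, where $F(\lambda,\theta)=\Xi\bigl(0;J^{-1}+A_0(\lambda)+\cot(\theta/2)B_0(\lambda),J^{-1}\bigr)$. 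With this identity in hand, the argument is purely bookkeeping: pick $\theta_0$ avoiding $\sigma(S(\mu))$ on a subinterval, observe via index-stability (Proposition~\ref{prp.b1}(ii) and Lemma~\ref{lma.b2}) that $F(\mu,\theta_0)$ is constant there, subtract, telescope over subintervals, and send the upper endpoint to $+\infty$. No analysis of ``clockwise vs.\ anti-clockwise'' crossings and no invertibility of $B_0$ is needed anywhere. Your proposed route essentially tries to rederive Lemma~5.3 by a local-in-$\mu$ argument, and you correctly observe that this becomes delicate when $B_0(\lambda)$ has a nontrivial kernel. You do not resolve that point: the suggestion to invoke the ``infinitesmal spectral gap'' deformation from the proof of Theorem~\ref{thm.a1} does not transfer, because that deformation modifies $G$ (hence $H$), and it is not at all clear that it preserves, or controllably tracks, $\mu(-1;\lambda)$; the spectral flow of $S$ is not an object to which that construction was designed to apply.

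Two smaller points. For part~(i), the paper's argument is sharper than yours: it uses the identity $\sigma_\ess\bigl(E(\R_-;H)-E(\R_-;H_0)\bigr)=[-\alpha(\lambda),\alpha(\lambda)]$ with $\alpha(\lambda)=\tfrac12\norm{S(\lambda)-I}$ from \cite{Push4}, so the ``if and only if'' is immediate from unitarity of $S(\lambda)$; your sketch only invokes Theorem~\ref{thm.a1}(ii), which gives the forward implication, and to get the converse you would need to check the hypotheses of Proposition~\ref{prp.b2} at the given $\lambda$ (possible, but not done). Finally, proving only the $\theta=\pi$ case of the relation $\mu(e^{i\theta};\lambda)=F(\lambda,\theta)$ is not sufficient: the definition \eqref{d5}--\eqref{d5a} of the spectral flow at $e^{i\pi}=-1$ inevitably involves an auxiliary $\theta_0\neq\pi$, so the counting identity is genuinely needed for two distinct angles simultaneously, which is exactly why the paper proves the more general identity \eqref{d6} rather than \eqref{d7} alone.
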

In fact, the set $\calN$ (see \eqref{a18}) in this example  can be alternatively described as 
the set of points $\lambda>0$ where $-1\in\sigma(S(\lambda))$; 
see \eqref{d11} below. 

Suppose that $\lambda>0$ is monotonically increasing
and as  $\lambda$ passes through $\lambda_0$, 
an eigenvalue of $S(\lambda)$ crosses $-1$. 
Formula \eqref{d8} shows that the index function $\Xi(\lambda)=\Xi(\lambda;H,H_0)$ 
has a jump at $\lambda=\lambda_0$, i.e. 
$\Xi(\lambda_0+0)-\Xi(\lambda_0-0)=n$. 
The absolute value $\abs{n}$ of this jump 
equals the multiplicity of the eigenvalue of $S(\lambda)$ which crosses $-1$. 
The value of $n$ is positive if the eigenvalue of $S(\lambda)$ crosses $-1$ in the clockwise 
direction and it is negative for the anti-clockwise direction. 
\begin{remark}
In view of Remark~\ref{rmk.a2}, one can argue that \eqref{d8} has some similarity
to the Birman-Krein formula \cite{BK}
$$
\det S(\lambda)=e^{-2\pi i \xi(\lambda; H,H_0)}.
$$
Indeed, both identities relate some regularisation of
\eqref{a7a} to the spectrum of the scattering matrix. 
This similarity becomes more transparent if the Birman-Krein formula 
is written as 
\begin{equation}
\xi(\lambda;H,H_0)
=
-\frac1{2\pi}\arg\det S(\lambda)
=
-\frac1{2\pi}\sum_n \theta_n(\lambda)
\quad (\hskip -4mm \mod 1),
\label{d8a}
\end{equation}
where $e^{i\theta_n(\lambda)}$ are the eigenvalues of the 
scattering matrix $S(\lambda)$. Informally speaking, 
\eqref{d8} is an integer valued version of \eqref{d8a}.
\end{remark}
\begin{remark}\label{rmk.d2}
Following the proof, one can see that Theorem~\ref{thm.d1}
can be extended to a very general class of pairs of operators $H_0$, $H$ 
such that the a.c. spectrum of $H_0$ coincides with a 
semi-axis and the scattering matrix $S(\lambda)$ 
is continuous in $\lambda$ and $\norm{S(\lambda)-I}\to0$
as $\lambda\to\infty$. 
In fact, 
in \cite{Push2}, the eigenvalue counting function $\mu(e^{i\theta};\lambda)$
was defined and studied in a more general  setting without 
any assumptions on the geometry of the a.c. spectrum of $H_0$. 
The identity \eqref{d8} can also be proven in this case. 
\end{remark}

\subsection{Proof of Theorem~\ref{thm.d1}}\label{sec.d6}

(i)
In \cite{Push4} it is proven that for all 
$\lambda>0$, one has
$$
\sigma_\ess\bigl(E((-\infty,\lambda);H)-E((-\infty,\lambda);H_0)\bigr)
=
[-\alpha(\lambda),\alpha(\lambda)],
\quad 
\alpha(\lambda)=\frac12\norm{S(\lambda)-I}.
$$
Thus, $\Xi(\lambda;H,H_0)$ 
exists if and only if $\alpha(\lambda)<1$. 
Since $S(\lambda)$ is unitary, this means that 
$\Xi(\lambda;H,H_0)$ 
exists if and only if $-1\notin\sigma(S(\lambda))$, 
as required.

(ii)
We use the notation \eqref{a15a}. 
By Theorem~\ref{thm.a1}, it suffices to prove that 
\begin{equation}
\mu(-1;\lambda)
=
\Xi(0;J^{-1}+A_0(\lambda),J^{-1})
\label{d7}
\end{equation}
whenever $-1\notin\sigma(S(\lambda))$. 
In fact, we will prove a more general statement:
\begin{equation}
\mu(e^{i\theta};\lambda)
=
\Xi(0;J^{-1}+A_0(\lambda)+\cot(\theta/2)B_0(\lambda),J^{-1}),
\label{d6}
\end{equation}
whenever $e^{i\theta}\notin\sigma(S(\lambda))$.
The proof of this given below
heavily relies on the results of \cite{Push2}. 
We denote by $F(\lambda,\theta)$ the r.h.s. of \eqref{d6}.

1. 
In \cite[Lemma~5.1]{Push2}, it has been proven that
\begin{equation}
\dim\Ker(S(\lambda)-e^{i\theta}I)
=
\dim\Ker(J^{-1}+A_0(\lambda)+\cot(\theta/2)B_0(\lambda))
\label{d11}
\end{equation}
for all $\lambda>0$ and $\theta\in(0,2\pi)$. 
It follows \cite[Lemma~5.3]{Push2}
that 
\begin{equation}
N(e^{i\theta_1},e^{i\theta_2};S(\lambda))
=
F(\lambda,\theta_1)-F(\lambda,\theta_2),
\label{d7a}
\end{equation}
if $e^{i\theta_1}\notin\sigma(S(\lambda))$ and 
$e^{i\theta_2}\notin\sigma(S(\lambda))$. 

2.  Let $[\lambda_1,\lambda_2]$ be an interval such that
for some $\theta_0\in(0,2\pi)$ and all $\lambda\in[\lambda_1,\lambda_2]$ 
one has $e^{i\theta_0}\notin\sigma(S(\lambda))$. 
Then, by \eqref{d11}, we have 
$$
0\notin\sigma(J^{-1}+A_0(\lambda)+\cot(\theta_0/2)B_0(\lambda))
$$
for all $\lambda\in[\lambda_1,\lambda_2]$.  
From here
by Proposition~\ref{prp.b1}(ii) and Lemma~\ref{lma.b2} of the next section
it follows that $F(\lambda,\theta_0)$
is constant in the interval $\lambda\in[\lambda_1,\lambda_2]$ 
and thus $F(\lambda_1,\theta_0)=F(\lambda_2,\theta_0)$.
From here and \eqref{d7a} we get
$$
N(e^{i\theta},e^{i\theta_0};S(\lambda_2))
-
N(e^{i\theta},e^{i\theta_0};S(\lambda_1))
=
F(\lambda_2,\theta)-F(\lambda_1,\theta).
$$
By the definition \eqref{d5} of the spectral flow, it follows
\begin{equation}
\mu(e^{i\theta};\{S(\lambda)\}_{\lambda\in[\lambda_1,\lambda_2]})
\\
=
F(\lambda_2,\theta)-F(\lambda_1,\theta).
\label{d9}
\end{equation}

3.  Let $[\lambda_1,\lambda_2]\subset(0,\infty)$ be an arbitrary interval.  
According to the definition \eqref{d5a}, we need to split $[\lambda_1,\lambda_2]$ 
into subintervals $\Delta_i$ and add the expressions in the r.h.s of \eqref{d9} 
corresponding to these subintervals. This leads to a telescoping sum,
and so we see that formula
\eqref{d9} extends to an arbitrary interval $[\lambda_1,\lambda_2]\subset(0,\infty)$.

4.  Let us fix $\lambda_1>0$ and $\theta\in(0,2\pi)$ and let $\lambda_2\to\infty$. 
From \eqref{a20} by an argument similar to the one used in the proof of 
Theorem~\ref{thm.a6}, it follows that $F(\lambda_2,\theta)=0$
for all sufficiently large $\lambda_2$. Thus, we obtain 
$$
\mu(e^{i\theta};\{S(\lambda)\}_{\lambda\in[\lambda_1,\infty]})
=
-F(\lambda_1,\theta),
$$
and \eqref{d6} follows. 
\qed

\section{Proof of Theorem~\ref{thm.a1} }\label{sec.b}

\subsection{Stability of index}\label{sec.b1}

Recall the following  statement, see e.g.
\cite[Theorem~VIII.20(i)]{RS1}
and \cite[Theorem~VIII.23(b)]{RS1}:
\begin{proposition}\label{prp.b1}
Let $A_n$ and $A$ be selfadjoint operators and suppose
that $A_n\to A$ as $n\to\infty$ in the norm resolvent sense. 
Then:
\begin{enumerate}[\rm (i)]

\item 
If $f$ is a continuous function on $\R$ with $\lim_{\abs{x}\to\infty}f(x)=0$, 
then $\norm{f(A_n)-f(A)}\to0$ as $n\to\infty$. 

\item 
Let $a,b\in\R$, $a<b$, and suppose that
$a\notin\sigma(A)$, $b\notin\sigma(A)$.
Then
$$
\norm{E((a,b);A_n)-E((a,b);A)}\to0
$$
as $n\to\infty$. 
\end{enumerate}
\end{proposition}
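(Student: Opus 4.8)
The plan is to reduce to the two resolvent functions by a Stone--Weierstrass argument. Recall first that norm resolvent convergence at one non-real point is equivalent, via the second resolvent identity, to norm resolvent convergence at every $z\in\C\setminus\R$, and that $R(\overline z)=R(z)^*$. Let $\calA$ denote the set of continuous functions $f$ on $\R$ with $\lim_{\abs{x}\to\infty}f(x)=0$ for which $\norm{f(A_n)-f(A)}\to0$. Using the elementary bound $\norm{g(B)}\le\sup\abs{g}$ for self-adjoint $B$, one checks that $\calA$ is closed under uniform limits (a routine $\e/3$ estimate), and the identity $f(A_n)g(A_n)-f(A)g(A)=(f(A_n)-f(A))g(A_n)+f(A)(g(A_n)-g(A))$ together with linearity shows that $\calA$ is a $*$-subalgebra of the algebra $C_\infty(\R)$ of continuous functions vanishing at infinity. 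By hypothesis the functions $x\mapsto(x-i)^{-1}$ and $x\mapsto(x+i)^{-1}$ lie in $\calA$; they separate the points of $\R$ and vanish nowhere, so by the locally compact form of the Stone--Weierstrass theorem $\calA=C_\infty(\R)$, which is exactly (i).

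\textbf{Part (ii).} Since $\chi_{(a,b)}$ is not continuous, (i) does not apply directly; the idea is to interpolate by a continuous function, after first ruling out spectrum of $A_n$ near $a$ and $b$. As $a,b\notin\sigma(A)$ and $\sigma(A)$ is closed, fix $\delta>0$ so small that $[a-2\delta,a+2\delta]$ and $[b-2\delta,b+2\delta]$ are disjoint from $\sigma(A)$ and from each other. Applying (i) to a continuous function supported in $(a-2\delta,a+2\delta)$ and equal to $1$ on $[a-\delta,a+\delta]$ (its image under $A$ is $0$), and likewise near $b$, we obtain $\sigma(A_n)\cap([a-\delta,a+\delta]\cup[b-\delta,b+\delta])=\varnothing$ for all large $n$. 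Now choose a continuous $f$ with $0\le f\le1$, $f\equiv1$ on $[a,b]$ and $\supp f\subset(a-\delta,b+\delta)$. Then $f$ and $\chi_{(a,b)}$ can differ only within $[a-\delta,a+\delta]\cup[b-\delta,b+\delta]$, which is disjoint from $\sigma(A)$ and, for large $n$, from $\sigma(A_n)$; hence $f(A)=E((a,b);A)$ and $f(A_n)=E((a,b);A_n)$ for large $n$. Applying (i) to $f$ yields $\norm{E((a,b);A_n)-E((a,b);A)}\to0$, as claimed.

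\textbf{Main obstacle.} In (i) the only non-formal points are the propagation of norm resolvent convergence to all non-real $z$ and the closedness of $\calA$, both routine. The substantive step is in (ii): one cannot apply the continuous functional calculus of (i) to the discontinuous function $\chi_{(a,b)}$ without first establishing that $\sigma(A_n)$ stays away from $a$ and $b$ uniformly for large $n$, so that a single continuous $f$ realises both spectral projections. Everything else is bookkeeping.
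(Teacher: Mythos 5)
The paper does not prove Proposition 5.1; it cites it from Reed--Simon (Theorems VIII.20(b) and VIII.23(b) of \cite{RS1}). Your argument is correct, complete, and is essentially the same Stone--Weierstrass argument used in that reference: part (i) by showing that the set of functions for which the conclusion holds is a closed $*$-subalgebra of $C_0(\R)$ containing $(x\pm i)^{-1}$, and part (ii) by first using (i) with a bump function to push $\sigma(A_n)$ away from $a$ and $b$ for large $n$, then interpolating $\chi_{(a,b)}$ by a continuous $f$ that realises both spectral projections. No gaps.
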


Next, we need a stability theorem for the index of a pair of projections. 
Variants of this statement appeared before, see e.g.
\cite[Theorem~3.12]{GM}.
\begin{lemma}\label{lma.b2}
Let $P,Q$ be a Fredholm pair of orthogonal 
projections in a Hilbert space. 
Let $P_n,Q_n$, $n\geq1$, be orthogonal projections
such that 
\begin{equation}
\norm{(P_n-Q_n)-(P-Q)}\to0
\label{b1}
\end{equation}
as $n\to\infty$. 
Then for all sufficiently large $n$, the pair
$P_n,Q_n$ is Fredholm and 
$$
\iindex(P_n,Q_n)=\iindex(P,Q).
$$
\end{lemma}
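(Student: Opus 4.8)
The plan is to reduce the statement to the characterization of the index in terms of eigenvalue counting near $\pm1$ and then exploit norm convergence of $P_n-Q_n$ to $P-Q$. First I would recall that the only accumulation points of $\sigma(P-Q)$ in $[-1,1]$ can be $1$, $-1$, or points in between, but since the pair $P,Q$ is Fredholm, $\{1,-1\}\cap\sigma_{\rm ess}(P-Q)=\varnothing$. Hence there is $\e\in(0,1)$ such that $(1-\e,1)$ and $(-1,-1+\e)$ contain no spectrum of $P-Q$; equivalently $1-\e$ and $-(1-\e)$ are not in $\sigma(P-Q)$ and $E((1-\e,1];P-Q)$, $E([-1,-1+\e);P-Q)$ are the finite-rank spectral projections onto the eigenspaces for the eigenvalues $1$ and $-1$ respectively. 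Thus $\iindex(P,Q)=\dim\Ker(P-Q-I)-\dim\Ker(P-Q+I)=\rank E((1-\e,1];P-Q)-\rank E([-1,-1+\e);P-Q)$, and moreover, by the symmetry \eqref{a3}, $\rank E([1-\e,1];P-Q)-\rank E([-1,1-\e+\text{something}])$ — more precisely the full spectral subspace of $P-Q$ for the interval $[1-\e,1]$ minus that for $[-1,-1+\e]$ equals the index, because the eigenvalues strictly between cancel pairwise.

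Next I would use \eqref{b1}: since $P_n-Q_n\to P-Q$ in norm and $1-\e,\,-(1-\e)\notin\sigma(P-Q)$, for all large $n$ the points $\pm(1-\e)$ also avoid $\sigma(P_n-Q_n)$, and by the spectral mapping / upper semicontinuity of spectrum under norm convergence the spectrum of $P_n-Q_n$ inside $[1-\e,1]$ stays within a small neighbourhood. The crucial point is that norm convergence of self-adjoint operators implies norm convergence of $E([1-\e,1];P_n-Q_n)$ to $E([1-\e,1];P-Q)$ (this is Proposition~\ref{prp.b1}(ii) applied to the self-adjoint operators $P_n-Q_n$, which converge in norm hence in norm resolvent sense; one adjusts the endpoint to an interval $(c,d)$ with $c<1-\e$, $1-\e<d$ but with $d$ slightly less than... actually one takes the interval $(1-\e, 1+\delta)$ or closes it appropriately — since $1$ is the top of the spectrum one may use $(1-\e,2)$ so that the endpoints $1-\e$ and $2$ avoid the spectrum of $P-Q$). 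Norm convergence of projections of finite rank forces equality of ranks for large $n$: if $\|E_n-E\|<1$ then $\rank E_n=\rank E$. Hence $\rank E((1-\e,2);P_n-Q_n)=\rank E((1-\e,2);P-Q)$ and similarly $\rank E((-2,-1+\e);P_n-Q_n)=\rank E((-2,-1+\e);P-Q)$ for large $n$.

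From here I would conclude the Fredholm property and the index identity. Fredholmness of the pair $P_n,Q_n$ for large $n$: $\sigma_{\rm ess}(P_n-Q_n)$ near $1$ is controlled because $E((1-\e,2);P_n-Q_n)$ has the same finite rank as $E((1-\e,2);P-Q)$, so the spectrum of $P_n-Q_n$ in $(1-\e,2)$ consists of finitely many eigenvalues of finite multiplicity — hence $1\notin\sigma_{\rm ess}(P_n-Q_n)$, and likewise $-1\notin\sigma_{\rm ess}(P_n-Q_n)$; thus $P_n,Q_n$ is Fredholm. For the index, I would argue $\iindex(P_n,Q_n)=\rank E((1-\e,2);P_n-Q_n)-\rank E((-2,-1+\e);P_n-Q_n)$: indeed for any Fredholm pair $P',Q'$, choosing $\e'$ with $\pm(1-\e')\notin\sigma(P'-Q')$ and using \eqref{a3} to cancel eigenvalues strictly between $-1+\e'$ and $1-\e'$, one gets $\iindex(P',Q')=\rank E([1-\e',2];P'-Q')-\rank E([-2,-1+\e'];P'-Q')$, and this is independent of the permissible choice of $\e'$ since enlarging the excluded band removes only mutually cancelling pairs; now apply this with the common value $\e$ to both $P_n,Q_n$ (large $n$) and $P,Q$. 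Combining with the rank equalities from the previous paragraph yields $\iindex(P_n,Q_n)=\iindex(P,Q)$.

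The main obstacle, and the only genuinely delicate step, is justifying the identity $\iindex(P',Q')=\rank E([1-\e',2];P'-Q')-\rank E([-2,-1+\e'];P'-Q')$ together with its independence of $\e'$ — i.e. that all spectrum of $P'-Q'$ strictly inside $(-1,1)$ comes in $\pm$ pairs of equal multiplicity and so contributes nothing to the index. This is exactly the content of \eqref{a3}, namely $\dim\Ker(P'-Q'-\lambda I)=\dim\Ker(P'-Q'+\lambda I)$ for $\lambda\neq\pm1$, applied via a spectral-measure argument: the unitary (or partial isometry) $W=I-P'-Q'$ intertwines $P'-Q'$ with $-(P'-Q')$ up to the kernel of $W$, and one checks the kernel of $W$ is contained in $\Ker(P'-Q'-I)\oplus\Ker(P'-Q'+I)$, so on the orthogonal complement $P'-Q'$ and $-(P'-Q')$ are unitarily equivalent, giving the pairing of all eigenvalues away from $\pm1$ (and indeed of the continuous spectrum, which by Fredholmness stays away from $\pm1$). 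Once this symmetry is in hand, everything else is bookkeeping with finite ranks and Proposition~\ref{prp.b1}.
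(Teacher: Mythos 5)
Your proposal is correct and takes essentially the same approach as the paper: pick a spectral gap around $\pm 1$ for $P-Q$, express $\iindex$ as the difference of ranks of spectral projections over the intervals $(1-\e,2)$ and $(-2,-1+\e)$ via the symmetry \eqref{a3}, apply Proposition~\ref{prp.b1}(ii) to get norm convergence of the corresponding projections for $P_n-Q_n$, and conclude both Fredholmness and rank (hence index) equality for large $n$.
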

\begin{proof}
Since $P,Q$ is a Fredholm pair, there exists $a>0$ such that
$$
\sigma(P-Q)\cap(-1,1)\subset [-1+2a,1-2a].
$$
Then $-1+a$ and $1-a$ are not in the spectrum of $P-Q$ 
and so, by Proposition~\ref{prp.b1}(ii),
\begin{align}
\norm{E((1-a,2);P_n-Q_n)-E((1-a,2);P-Q)}&\to0,
\label{b2}
\\
\norm{E((-2,-1+a);P_n-Q_n)-E((-2,-1+a);P-Q)}&\to0,
\label{b3}
\end{align}
as $n\to\infty$. In particular, $\rank E((1-a,2);P_n-Q_n)$
and $\rank E((-2,-1+a); P_n-Q_n)$ are finite for all 
sufficiently large $n$ and so the pair $P_n,Q_n$ is Fredholm.

Finally, from the definition of index and \eqref{a3} we get
\begin{align*}
\iindex(P,Q)&=
\rank E((1-a,2); P-Q)-\rank E((-2,-1+a);P-Q),
\\
\iindex(P_n,Q_n)&=
\rank E((1-a,2); P_n-Q_n)-\rank E((-2,-1+a);P_n-Q_n)
\end{align*}
and so, applying \eqref{b2}, \eqref{b3}, we get the required statement. 
\end{proof}
In what follows, we will consider families of Fredholm pairs of projections $P_s$, $Q_s$ 
such that the difference $P_s-Q_s$ depends continuously on $s$ in the 
operator norm. Lemma~\ref{lma.b2} ensures that in this situation $\iindex(P_s,Q_s)$ 
is independent of $s$.

\subsection{Existence of $\Xi$}\label{sec.b2}
Assume that $H=H_0+V$ where $V=G^*JG$ satisfies 
assumptions \eqref{a8}.
First we need some notation. 
For $\lambda\in\R$, denote 
\begin{align*}
F_0(\lambda)&=GE((-\infty,\lambda); H_0) 
\bigl(GE((-\infty,\lambda); H_0)\bigr)^*,
\\
F(\lambda)&=GE((-\infty,\lambda); H) 
\bigl(GE((-\infty,\lambda); H)\bigr)^*.
\end{align*}
We note that by \eqref{a8}, \eqref{a9}, the operators
$F_0(\lambda)$, $F(\lambda)$ are compact. 
The existence of $\Xi(\lambda;H,H_0)$ will be derived from 
the following result of \cite{Push4}: 

\begin{proposition}\cite[Theorem~2.6]{Push4}\label{prp.b2}
Assume \eqref{a8}. 
Suppose that for some $\lambda\in\R$, the limits 
$T(\lambda+i0)$, $T_0(\lambda+i0)$ 
and the derivatives 
$\frac{d}{d\lambda}F(\lambda)$,
$\frac{d}{d\lambda}F_0(\lambda)$
exist in the operator norm. 
Then the index $\Xi(\lambda;H,H_0)$ exists if and only if 
$J^{-1}+A_0(\lambda)$ is invertible. 
\end{proposition}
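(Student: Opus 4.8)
The plan is to deduce the statement from an explicit description of the \emph{essential} spectrum of the difference of spectral projections $P-Q$, where $P=E((-\infty,\lambda);H_0)$ and $Q=E((-\infty,\lambda);H)$: by definition $\Xi(\lambda;H,H_0)$ exists if and only if $\{-1,1\}\cap\sigma_\ess(P-Q)=\varnothing$, so it is enough to show that $\sigma_\ess(P-Q)\subset(-1,1)$ precisely when $J^{-1}+A_0(\lambda)$ is invertible. First I would record two preliminary remarks. Since $A_0(\lambda)=\Re T_0(\lambda+i0)$ is compact, $J^{-1}+A_0(\lambda)=J^{-1}(I+JA_0(\lambda))$ is Fredholm of index zero, hence invertible if and only if injective. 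Moreover, by \eqref{a10b} and the assumed norm existence of $T_0(\lambda+i0)$ and $T(\lambda+i0)$, the operator $J^{-1}+T_0(\lambda+i0)$ is boundedly invertible, with inverse $J-JT(\lambda+i0)J$; thus the genuine content of the proposition is to decide when the self-adjoint real part $J^{-1}+A_0(\lambda)$ of this invertible operator is injective.

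The key step is to establish a \emph{stationary representation} of $Q-P$ and then localise it at $\lambda$. Using Stone's formula together with the factorised resolvent identity \eqref{a10a} one writes
$$
Q-P=\slim_{\e\to+0}\frac1\pi\int_{-\infty}^\lambda\Im\bigl(R(x+i\e)-R_0(x+i\e)\bigr)\,dx
=-\slim_{\e\to+0}\frac1\pi\int_{-\infty}^\lambda\Im\Bigl((GR_0(x-i\e))^*\bigl(J-JT(x+i\e)J\bigr)(GR_0(x+i\e))\Bigr)\,dx .
$$
The hypotheses — norm existence of $T_0(\lambda+i0)$, $T(\lambda+i0)$ and of $\frac{d}{d\lambda}F_0(\lambda)$, $\frac{d}{d\lambda}F(\lambda)$ — ensure that the operators $GE((-\infty,x);H_0)$, $GE((-\infty,x);H)$ and the sandwiched resolvents $T_0(x+i\e)$, $T(x+i\e)$ behave regularly (are norm continuous up to the real axis) for $x$ in a neighbourhood of $\lambda$. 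This makes it possible to ``freeze'' the integrand at $x=\lambda$: one shows that replacing the boundary data on $(-\infty,\lambda)$ by their values at $x=\lambda$ alters $Q-P$ only by a compact operator — a ``regular symbol produces a compact error'' argument, and this is exactly where the differentiability of $F_0$, $F$ at $\lambda$ is used. Consequently $\sigma_\ess(Q-P)$ equals the spectrum of an explicit \emph{local model} operator assembled from $J$ and from the compact operators $A_0(\lambda)$ and $B_0(\lambda)=\pi\,\frac{d}{d\lambda}F_0(\lambda)\geq0$.

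Finally I would diagonalise the local model. Modulo compact operators it is a direct-integral (Hankel-type) operator whose symbol is an explicit elementary function of the eigenvalues of $B_0(\lambda)^{1/2}(J^{-1}+A_0(\lambda))^{-1}B_0(\lambda)^{1/2}$ — equivalently, of the eigenvalues of the ``local scattering matrix'' $S(\lambda)=I-2i\,B_0(\lambda)^{1/2}(J^{-1}+T_0(\lambda+i0))^{-1}B_0(\lambda)^{1/2}$ — which gives $\sigma_\ess(Q-P)=[-\alpha(\lambda),\alpha(\lambda)]$ with $\alpha(\lambda)=\frac12\|S(\lambda)-I\|$. If $J^{-1}+A_0(\lambda)$ is invertible these operators are well defined and one obtains $\alpha(\lambda)<1$; if $J^{-1}+A_0(\lambda)$ is not injective then $\dim\Ker(S(\lambda)+I)=\dim\Ker(J^{-1}+A_0(\lambda))\neq0$ (cf.\ the $\theta=\pi$ case of \eqref{d11}), so $-1\in\sigma(S(\lambda))$ and $\alpha(\lambda)=1$, whence $\pm1\in\sigma_\ess(Q-P)$. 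Thus $P-Q$ is Fredholm exactly when $J^{-1}+A_0(\lambda)$ is invertible, which is the assertion. The hard part is the localisation lemma together with the precise identification of the local model and of $\alpha(\lambda)$: estimating the error committed in freezing the integrand at $x=\lambda$, and proving the exact formula $\sigma_\ess(Q-P)=[-\alpha(\lambda),\alpha(\lambda)]$ rather than a mere inclusion, is the technical heart of the argument; the remaining ingredients — the Fredholm-index-zero reduction of the first paragraph and the elementary algebra relating $S(\lambda)$, $\alpha(\lambda)$ and $J^{-1}+A_0(\lambda)$ — are routine.
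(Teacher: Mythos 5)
The paper does not prove Proposition~\ref{prp.b2}: it is imported verbatim from \cite[Theorem~2.6]{Push4}, so there is no in-paper argument to compare against. That said, your plan is a faithful reconstruction of what \cite{Push4} does, as far as one can read it off from the way the present paper later uses that reference. In the proof of Theorem~\ref{thm.d1}(i) the paper invokes precisely the identity
$\sigma_\ess\bigl(E((-\infty,\lambda);H)-E((-\infty,\lambda);H_0)\bigr)=[-\alpha(\lambda),\alpha(\lambda)]$ with $\alpha(\lambda)=\tfrac12\norm{S(\lambda)-I}$ (attributed to \cite{Push4}), together with the kernel identity \eqref{d11} at $\theta=\pi$ (attributed to \cite{Push2}); these are exactly the two ingredients your final paragraph combines, and the equivalence ``$\Xi$ exists $\Leftrightarrow$ $\alpha(\lambda)<1$ $\Leftrightarrow$ $-1\notin\sigma(S(\lambda))$ $\Leftrightarrow$ $J^{-1}+A_0(\lambda)$ injective'' is then correct because $S(\lambda)-I$ is compact, so $-1$ can only enter $\sigma(S(\lambda))$ as an eigenvalue. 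Your preliminary reductions are also sound: $J^{-1}+A_0(\lambda)$ is Fredholm of index zero since $A_0(\lambda)$ is compact, and $J^{-1}+T_0(\lambda+i0)$ is boundedly invertible with inverse $J-JT(\lambda+i0)J$ by passing to the boundary limit in \eqref{a10b} under the norm-convergence hypotheses. What you have not supplied — and what constitutes the actual content of \cite{Push4} — is the localisation lemma (freezing the Stone-formula representation of $Q-P$ at $x=\lambda$ modulo compacts, where the differentiability of $F_0$, $F$ is used), the precise identification of the local model operator, and the computation of its essential spectrum as the full closed interval $[-\alpha(\lambda),\alpha(\lambda)]$. Note in particular that the ``only if'' direction requires the \emph{equality}, not merely an inclusion $\sigma_\ess(Q-P)\subset[-\alpha,\alpha]$, since when $\alpha(\lambda)=1$ you must conclude $\pm1\in\sigma_\ess(Q-P)$ rather than that $\pm1$ are isolated eigenvalues. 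As a plan your sketch is correct and matches the cited route; as a proof it defers the technical heart to \cite{Push4}, which is what the paper itself does.
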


We need two simple lemmas. 
\begin{lemma}\label{lma.b1}
Let $\calM$ be a bounded self-adjoint operator with a bounded inverse 
and let $\calT$ be a compact operator. 
Denote $\calA=\Re \calT$, $\calB=\Im \calT$ and assume that $\calB\geq0$ and 
$\Ker(\calM+\calA)=\{0\}$. Then $\calM+\calT$ has a bounded inverse. 
\end{lemma}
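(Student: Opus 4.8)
The plan is to prove that $\calM + \calT$ is injective with dense range and bounded inverse by a direct argument exploiting the sign condition on $\calB = \Im\calT$. First I would establish injectivity. Suppose $(\calM+\calT)u = 0$ for some $u$. Then, taking the imaginary part of the inner product $\langle(\calM+\calT)u,u\rangle = 0$ and using $\calM = \calM^*$, $\calA = \calA^*$, we get $\langle \calB u, u\rangle = 0$. Since $\calB \geq 0$, this forces $\calB^{1/2}u = 0$, hence $\calB u = 0$, and therefore $(\calM+\calA)u = 0$. By the hypothesis $\Ker(\calM+\calA) = \{0\}$ we conclude $u = 0$. So $\calM+\calT$ is injective.

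Next I would show that $\calM + \calT$ is a Fredholm operator of index zero, so that injectivity implies invertibility. Write $\calM + \calT = \calM(I + \calM^{-1}\calT)$. Since $\calM$ is boundedly invertible and $\calT$ is compact, $\calM^{-1}\calT$ is compact, so $I + \calM^{-1}\calT$ is Fredholm of index $0$. Hence $\calM + \calT$ is Fredholm of index $0$. An injective Fredholm operator of index $0$ is surjective, hence boundedly invertible by the open mapping theorem.

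The only point that needs a little care is that the adjoint $\calM + \calT^*$ is also injective, which is what actually delivers surjectivity of $\calM + \calT$ via the Fredholm-index-zero statement, but in fact the index-zero argument already handles this: for a Fredholm operator of index $0$, trivial kernel automatically gives trivial cokernel. Alternatively, one notes that $\calM + \calT^* = \calM + \calA - i\calB$ has $\Im(\calM+\calT^*) = -\calB \leq 0$, and repeating the imaginary-part computation with the sign reversed gives injectivity of $\calM + \calT^*$ as well; combined with Fredholmness this again yields invertibility. Either route works, and I expect no genuine obstacle here — the lemma is a soft functional-analytic fact whose proof is essentially the two computations above.
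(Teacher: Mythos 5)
Your proof is correct and follows the same route as the paper: injectivity is established by taking the imaginary part of $\langle(\calM+\calT)u,u\rangle=0$ to force $\calB u=0$ and then invoking $\Ker(\calM+\calA)=\{0\}$, and invertibility then follows because $\calM+\calT=\calM(I+\calM^{-1}\calT)$ is a compact perturbation of an invertible operator, hence Fredholm of index zero. The paper simply states the second step in one line ("it suffices to prove that $\Ker(\calM+\calT)=\{0\}$"); your elaboration, including the optional remark about the adjoint, is consistent with it and adds nothing essentially different.
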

\begin{proof}
Since $\calM$ has a bounded inverse and $\calT$ is compact, it suffices
to prove that $\Ker(\calM+\calT)=\{0\}$. 
Suppose that $(\calM+\calT)f=0$ for some vector $f$. 
Then 
$$
((\calM+\calA)f,f)+i(\calB f,f)=0.
$$
Taking imaginary parts yields $(\calB f,f)=0$. 
Since $\calB\geq0$, it follows that $\calB f=0$. 
Thus, $(\calM+\calA)f=0$ and so $f=0$. 
\end{proof}

\begin{lemma}\label{lma.b3}
Assume \eqref{a8} and \eqref{a15}. 
Then the derivative $\frac{d}{d\lambda}F_0(\lambda)$ exists
in the operator norm for all $\lambda\in\Delta$. 
\end{lemma}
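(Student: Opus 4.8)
The plan is to prove that for every closed subinterval $[a,b]\subset\Delta$ one has
\[
F_0(b)-F_0(a)=\frac1\pi\int_a^b B_0(\mu)\,d\mu ,
\]
with $B_0(\mu)=\Im T_0(\mu+i0)$ as in \eqref{a15a}. By \eqref{a15} the map $\mu\mapsto B_0(\mu)$ is continuous in the operator norm on $\Delta$, so the fundamental theorem of calculus for Banach-space valued integrals then yields that $\lambda\mapsto F_0(\lambda)$ is norm-differentiable on $\Delta$ with $\frac{d}{d\lambda}F_0(\lambda)=\frac1\pi B_0(\lambda)$, which is the assertion of the lemma. Since $E((-\infty,\lambda);H_0)$ is a self-adjoint idempotent, $F_0(\lambda)=GE((-\infty,\lambda);H_0)G^{*}$ (understood, as in \eqref{a10}, with $GE((-\infty,\lambda);H_0)$ the compact operator $G(H_0-cI)^{-1/2}(H_0-cI)^{1/2}E((-\infty,\lambda);H_0)$), so the identity above amounts to a sandwiched Stone formula for $GE([a,b);H_0)G^{*}$.

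To set it up, fix $c<\inf\sigma(H_0)$ and put $K=G(H_0-cI)^{-1/2}$, a compact operator by \eqref{a8}; then $T_0(z)=K(H_0-c)(H_0-z)^{-1}K^{*}$ and hence $\tfrac1\pi\Im T_0(\mu+i\e)=K(H_0-c)\,p_\e(H_0-\mu)\,K^{*}$ with the Poisson kernel $p_\e(x)=\e/\pi(x^2+\e^2)$. Pulling the fixed bounded factors out of the (norm-convergent) integral,
\[
\frac1\pi\int_a^b\Im T_0(\mu+i\e)\,d\mu=K(H_0-c)\,\chi_\e(H_0)\,K^{*},\qquad
\chi_\e(x)=\int_a^b p_\e(x-\mu)\,d\mu ,
\]
where $0\le\chi_\e\le1$ and $\chi_\e$ converges pointwise to the indicator of $(a,b)$ plus one half of the indicators of $\{a\}$ and $\{b\}$. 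I would then upgrade this to operator-norm convergence of the sandwiched expression by splitting the spectral parameter at a slightly larger interval $(a',b')\supset[a,b]$: on $\Ran E(\R\setminus(a',b');H_0)$ the bound $\chi_\e(x)\le\const\,\e/\dist(x,[a,b])^2$ off $[a,b]$ gives $\norm{(H_0-c)\chi_\e(H_0)E(\R\setminus(a',b');H_0)}\to0$, while on the bounded piece $(H_0-c)E((a',b');H_0)$ is bounded and $\chi_\e(H_0)E((a',b');H_0)$ converges strongly, so, $K$ being compact, the product converges in norm. This gives
\[
\frac1\pi\int_a^b\Im T_0(\mu+i\e)\,d\mu\ \xrightarrow[\e\to+0]{\ \norm{\cdot}\ }\
K(H_0-c)\bigl[E((a,b);H_0)+\tfrac12 E(\{a\};H_0)+\tfrac12 E(\{b\};H_0)\bigr]K^{*}.
\]

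The next step is to dispose of the two endpoint terms by showing that $GE(\{\lambda\};H_0)=0$ for every $\lambda\in\Delta$ — eigenvalues of $H_0$ in $\Delta$ are invisible to the factor $G$. Indeed, by \eqref{a15} the norms $\norm{\Im T_0(\lambda+i\e)}$ stay bounded as $\e\to0$; on the other hand $p_\e(H_0-\lambda)\ge\tfrac1{\pi\e}E(\{\lambda\};H_0)$ and $H_0-c>0$, whence $\Im T_0(\lambda+i\e)=K(H_0-c)p_\e(H_0-\lambda)K^{*}\ge\tfrac{\lambda-c}{\pi\e}\,\bigl(KE(\{\lambda\};H_0)\bigr)\bigl(KE(\{\lambda\};H_0)\bigr)^{*}\ge0$, and letting $\e\to0$ forces $KE(\{\lambda\};H_0)=0$; since $G=K(H_0-cI)^{1/2}$ on the form domain, which contains the eigenvectors of $H_0$, this gives $GE(\{\lambda\};H_0)=0$. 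Consequently $K(H_0-c)E(\{a\};H_0)K^{*}=(a-c)KE(\{a\})K^{*}=0$ and likewise at $b$, so the displayed limit equals $K(H_0-c)E((a,b);H_0)K^{*}=GE((a,b);H_0)G^{*}=GE([a,b);H_0)G^{*}=F_0(b)-F_0(a)$. Finally, by the uniform continuity in \eqref{a15}, $\Im T_0(\cdot+i\e)\to B_0(\cdot)$ uniformly on the compact set $[a,b]\subset\Delta$, so $\tfrac1\pi\int_a^b\Im T_0(\mu+i\e)\,d\mu\to\tfrac1\pi\int_a^b B_0(\mu)\,d\mu$ in norm; comparing the two limits gives the desired identity, and the lemma follows.

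The main obstacle is the second paragraph: obtaining convergence of the sandwiched Stone integral in the operator norm rather than merely strongly. This is exactly where the compactness of $K=G(H_0-cI)^{-1/2}$ from \eqref{a8} is indispensable (together with the crude decay bound for $(x-c)\chi_\e(x)$ at infinity), since strong convergence alone would not suffice to differentiate $F_0$ in norm. The eigenvalue-invisibility step, though it reads like a side remark, is likewise genuinely needed: if some $GE(\{\lambda\};H_0)$ were nonzero, the difference quotient $\tfrac1h\bigl(F_0(\lambda+h)-F_0(\lambda)\bigr)$ would acquire a term of order $1/h$.
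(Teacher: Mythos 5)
Your proof is correct and follows essentially the same route as the paper: both rest on a sandwiched Stone formula for $GE([a,b);H_0)G^*$ together with the observation that $GE(\{\lambda\};H_0)=0$ for $\lambda\in\Delta$ (a consequence of the boundedness of $\Im T_0(\lambda+i\varepsilon)$ as $\varepsilon\to+0$, which follows from \eqref{a15}). The only real difference is that you prove operator-norm convergence of the regularized Stone integral directly via the spectral splitting and the compactness of $K=G(H_0-cI)^{-1/2}$, whereas the paper establishes $F_0(b)-F_0(a)=\tfrac1\pi\int_a^b B_0(\mu)\,d\mu$ weakly on quadratic forms and then passes to the operator identity by self-adjointness — a shortcut that bypasses your norm-convergence argument at the cost of being less explicit.
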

\begin{proof}
From the obvious inequality 
$$
0\leq E(\{\lambda\};H_0)\leq \frac{\varepsilon^2}{(H_0-\lambda I)^2+\varepsilon^2I},
\quad \varepsilon>0,
$$
we get 
\begin{equation}
0\leq GE(\{\lambda\};H_0)\bigl(GE(\{\lambda\};H_0)\bigr)^*
\leq
\varepsilon \Im T_0(\lambda+i\varepsilon), 
\quad \varepsilon>0.
\label{b17}
\end{equation}
By \eqref{b15}, this implies that 
$GE(\{\lambda\};H_0)=0$ for all $\lambda\in\Delta_0$. 
Using this, Stone's formula (see e.g. \cite[Theorem~VII.13]{RS1}) yields 
\begin{equation}
((F_0(b)-F_0(a))f,f)
=
\lim_{\varepsilon\to+0}\frac1\pi\int_a^b
\Im(T_0(\lambda+i\varepsilon)f,f)d\lambda
=
\frac1\pi\int_a^b (B_0(\lambda)f,f)d\lambda
\label{b16}
\end{equation}
for any interval $(a,b)\subset\Delta_0$
and any $f\in\calK$. 
From here and the continuity of $B_0(\lambda)$
we get that $F_0(\lambda)$ is differentiable in $\lambda$
in the operator norm.
\end{proof}

\begin{proof}[Proof of Theorem~\ref{thm.a1}(i) and (ii)]

(i) is a trivial consequence of the fact that the eigenvalues
of $J^{-1}+A_0(\lambda)$ near zero depend continuously on 
$\lambda\in\Delta$. 

(ii) Our aim is to use Proposition~\ref{prp.b2}; we need to check 
that the limits and the derivatives mentioned in the hypothesis of
this proposition exist in the operator norm. 

1. 
The limit $T_0(\lambda+i0)$ exists in the operator norm for all 
$\lambda\in\Delta$; this trivially follows from \eqref{a15}. 
The derivative  $\frac{d}{d\lambda}F_0(\lambda)$ exists
in the operator norm for all $\lambda\in\Delta$ by Lemma~\ref{lma.b3}.

2. Consider $T(\lambda+i0)$ and $\frac{d}{d\lambda}F(\lambda)$. 
Let us fix a closed interval $\Delta_0\subset\Delta\setminus\calN$.
For any $\lambda\in\Delta_0$, we have $\Ker(J^{-1}+A_0(\lambda))=\{0\}$ 
and therefore, by Lemma~\ref{lma.b1}, the operator 
$J^{-1}+T_0(\lambda+i0)$ has a bounded inverse. 

By the identity \eqref{a10b}, we have
\begin{equation}
T(z)=J^{-1}-J^{-1}(J^{-1}+T_0(z))^{-1}J^{-1},
\label{b4}
\end{equation}
where the operator $J^{-1}+T_0(z)$ has a bounded inverse for all 
$\Im z\not=0$. 
Since  $J^{-1}+T_0(\lambda+i0)$ is invertible for all 
$\lambda\in\Delta_0$, we obtain that 
$T(z)$ is uniformly continuous in $z$ in the rectangle
$\Re z\in\Delta_0$, $\Im z\in(0,1)$. 
In particular, the limit $T(\lambda+i0)$ exists in the operator
norm for all $\lambda\in\Delta_0$. 

Now we can apply Lemma~\ref{lma.b3} with $\Delta_0$ instead of 
$\Delta$ and with $T(z)$ instead of $T_0(z)$. 
It follows that the derivative $\frac{d}{d\lambda}F(\lambda)$ exists
in the operator norm for all $\lambda\in\Delta_0$.

3. Now we can 
apply  Proposition~\ref{prp.b2} to any $\lambda\in\Delta_0$, 
and the required statement follows. 
\end{proof}

\subsection{Proof of Theorem~\ref{thm.a1}(iii) and (iv)}

In Sections~\ref{sec.bb} and \ref{sec.c} we prove 
\begin{theorem}\label{thm.b1}
Assume \eqref{a8} and suppose that $T_0(z)$ 
is uniformly continuous in the rectangle 
$\abs{\Re z}<1$, $\Im z\in(0,1)$. 
Assume that $J^{-1}+A_0(0)$ is invertible. 
Then the identity 
\begin{equation}
\Xi(0;H,H_0)=-\Xi(0;J^{-1}+A_0(0),J^{-1})
\label{b12}
\end{equation}
holds true. 
\end{theorem}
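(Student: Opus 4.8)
The plan is to reduce this statement, in which $\lambda=0$ sits on the essential spectrum of $H_0$, to the off-spectrum Birman--Schwinger identity \eqref{a12} of Proposition~\ref{prp.a1} by an ``infinitesimal spectral gap'' deformation, as advertised after the statement of Theorem~\ref{thm.a1}. Concretely, for small $\e>0$ I would introduce an auxiliary operator $H_0^{(\e)}$ obtained from $H_0$ by pushing a small neighbourhood of $0$ out of the spectrum --- e.g. $H_0^{(\e)}=g_\e(H_0)$ where $g_\e$ is a fixed smooth increasing function equal to the identity outside $(-\e,\e)$ and mapping $(-\e,\e)$ onto $(-\e,-\e/2)\cup(\e/2,\e)$ with a jump, or more simply $H_0^{(\e)} = H_0 + \e\, \chi_\e(H_0)$ type modification; the essential point is that $0\notin\sigma(H_0^{(\e)})$, that $H_0^{(\e)}\to H_0$ in norm resolvent sense as $\e\to 0$, and that $H_0^{(\e)}-H_0$ is bounded with norm $O(\e)$ and affects only the spectral subspace near $0$. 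One then sets $H^{(\e)}=H_0^{(\e)}+V$, keeping the same factorised perturbation $V=G^*JG$. Since $\Dom|H_0^{(\e)}-a|^{1/2}=\Dom|H_0-a|^{1/2}$ and $G(H_0^{(\e)}-a)^{-1/2}$ remains compact, the hypothesis \eqref{a8} holds for the pair $H_0^{(\e)}$, $H^{(\e)}$, and Proposition~\ref{prp.a1} applies: for $\e$ small enough that $0\notin\sigma(H_0^{(\e)})\cup\sigma(H^{(\e)})$ we get
\begin{equation*}
\Xi(0;H^{(\e)},H_0^{(\e)})=-\Xi(0;J^{-1}+T_0^{(\e)}(0);J^{-1}),
\end{equation*}
where $T_0^{(\e)}(0)=G(H_0^{(\e)})^{-1}G^*$ is self-adjoint.

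Next I would pass to the limit $\e\to 0$ on both sides. For the \textbf{right-hand side}: using the explicit modification, $T_0^{(\e)}(0)$ should converge in operator norm to $A_0(0)=\Re T_0(+i0)$ --- this is the heart of the matter and the place where the hypothesis that $T_0(z)$ is uniformly norm-continuous in the half-rectangle $|\Re z|<1$, $\Im z\in(0,1)$ is used. The point is that $G(H_0^{(\e)})^{-1}G^*$ is, up to the small near-zero spectral region, the same as $GR_0(0)G^*$ off the spectrum, and the contribution of the spectral region $(-\e,\e)$, smeared symmetrically around $0$, converges to the \emph{real part} of the boundary value $T_0(+i0)$ by a Stone-type formula (the same computation that appears in \eqref{b16}--\eqref{b17}): symmetric regularisation of the resolvent across the cut produces $\Re T_0(\lambda+i0)$, not $T_0(\lambda+i0)$ itself. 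Once $\|T_0^{(\e)}(0)-A_0(0)\|\to 0$ and $J^{-1}+A_0(0)$ is invertible by hypothesis, Lemma~\ref{lma.b2} (applied to the Fredholm pairs of spectral projections associated with $J^{-1}+T_0^{(\e)}(0)$ and $J^{-1}$, whose differences converge in norm) gives $\Xi(0;J^{-1}+T_0^{(\e)}(0);J^{-1})=\Xi(0;J^{-1}+A_0(0);J^{-1})$ for all small $\e$.

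For the \textbf{left-hand side}: I must show $\Xi(0;H^{(\e)},H_0^{(\e)})=\Xi(0;H,H_0)$ for small $\e$. By Lemma~\ref{lma.b2} it suffices to show that the difference of spectral projections $E((-\infty,0);H^{(\e)})-E((-\infty,0);H_0^{(\e)})$ converges in operator norm to $E((-\infty,0);H)-E((-\infty,0);H_0)$. Here I would use the contour-integral representation from the proof of Proposition~\ref{rmk.a1}, writing each difference of spectral projections as a contour integral of $R_0(z)-R(z)$ (resp.\ the $\e$-versions) over a fixed contour $\Gamma$ surrounding the relevant part of the spectrum below $0$; combined with the iterated resolvent identity \eqref{a10a} and norm-resolvent convergence $H_0^{(\e)}\to H_0$, $H^{(\e)}\to H$, together with compactness of $G(H_0-a)^{-1/2}$, this yields the required norm convergence. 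Note that $\Xi(0;H,H_0)$ exists by Theorem~\ref{thm.a1}(ii), whose proof (given above via Proposition~\ref{prp.b2}) only needs the norm continuity hypothesis and the invertibility of $J^{-1}+A_0(0)$, so there is no circularity.

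The \textbf{main obstacle} is the right-hand side limit: proving cleanly that the naive ``sandwiched resolvent at $0$'' of the gapped operator converges in norm to the \emph{real part} $A_0(0)$ of the boundary value rather than to something one-sided, and controlling this uniformly. This requires choosing the gap-opening modification $g_\e$ symmetrically about $0$ so that the Poisson-type smearing of the spectral measure of $H_0$ near $0$ reproduces $\Re T_0(\lambda+i0)=\lim_\e \Re T_0(i\e)$, and then invoking the uniform norm continuity of $T_0(z)$ in the rectangle to upgrade pointwise-in-$f$ convergence (as in \eqref{b16}) to operator-norm convergence. Everything else --- verifying \eqref{a8} for the gapped pair, the contour-integral argument for the left-hand side, and the two applications of Lemma~\ref{lma.b2} --- is routine given the machinery already assembled in the excerpt.
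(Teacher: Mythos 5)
Your plan shares the paper's guiding idea (``open an infinitesimal spectral gap at $0$''), but the two implementations differ in a way that matters, and your version has a genuine gap. The paper never touches $H_0$: it deforms the \emph{perturbation}, setting $G(\omega_s)=G\,\omega_s(H_0)^{1/2}$ with $\omega_s=1-s\chi_\delta$ and $H(\omega_s)=H_0+G(\omega_s)^*JG(\omega_s)$, so that at $s=1$ the perturbation simply does not see the spectral subspace $E((-\delta,\delta);H_0)\calH$. That subspace then reduces \emph{both} $H_0$ and $H(\omega_1)$, the spectral projections $E(\R_-;H(\omega_1))$ and $E(\R_-;H_0)$ literally agree on it, and the comparison collapses to a pair of operators with a genuine gap at $0$, to which Proposition~\ref{prp.a1} applies. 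You instead modify $H_0$ into $H_0^{(\e)}$ and keep $V$ fixed; nothing forces $E((-\infty,0);H^{(\e)})$ and $E((-\infty,0);H)$ to differ by a controllable amount, and this is precisely where your argument breaks.

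Concretely, the step ``writing each difference of spectral projections as a contour integral of $R_0(z)-R(z)$ over a fixed contour $\Gamma$ surrounding the relevant part of the spectrum below $0$'' fails: the proof of Proposition~\ref{rmk.a1} uses such a contour only because $\lambda\notin\sigma_\ess(H_0)$, so that $(\sigma(H_0)\cup\sigma(H))\cap(-\infty,\lambda)$ is a bounded set at positive distance from $\lambda$. At $\lambda=0\in\sigma_\ess(H_0)$ there is no compact contour separating the negative spectrum from $0$ (e.g.\ $H$ may have negative eigenvalues accumulating at $0$, and the essential spectrum touches $0$), so $E((-\infty,0);H)-E((-\infty,0);H_0)$ is not a contour integral of the resolvent difference. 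Even setting the contour idea aside, the norm convergence $E((-\infty,0);H^{(\e)})\to E((-\infty,0);H)$ as $\e\to 0$ is not an application of Proposition~\ref{prp.b1}(ii), since $0\in\sigma(H)$; one needs some replacement for the standard stability theorem, and that replacement is nontrivial. In the paper it is exactly the content of Theorem~\ref{thm.b4} and Lemmas~\ref{lma.c2}--\ref{lma.c5}: the rigidity $\Ker H(\omega_s)=\Ker H_0$ (special to the paper's deformation through $G(\omega_s)$, since $G(\omega_s)$ vanishes on $\Ker H_0$), the decomposition $E(\R_-;H(\omega_s))=\psi(H(\omega_s))-\zeta(H(\omega_s))+\frac12 E(\{0\};H_0)$, and a Hilbert-transform estimate proving norm continuity of $\zeta(H(\omega_s))$ in $s$. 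None of that machinery is available for your $H^{(\e)}$, and you do not propose a substitute.

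Your sketch of the right-hand side is the more credible part: choosing $g_\e$ symmetrically so that the spectral smearing across the cut reproduces $\Re T_0(+i0)$ is exactly the phenomenon captured in the paper by Lemma~\ref{lma.b10} (which writes $A_0(0;\chi_\delta)$ as a principal-value integral of $B_0$ and shows it is small). But without a viable argument for the left-hand side limit, the proposal does not establish \eqref{b12}. To salvage your route you would need, at minimum, an analogue of Lemmas~\ref{lma.c2}--\ref{lma.c5} for the family $\{H^{(\e)}\}_{\e>0}$; at that point it is simpler and cleaner to deform the coupling as the paper does, which gives for free the exact reduction on $E((-\delta,\delta);H_0)\calH$ and the kernel rigidity.
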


This theorem will be proved by 
using the Birman-Schwinger principle (Proposition~\ref{prp.a1}) 
and a certain continuous deformation argument.

Now part 
(iii) of Theorem~\ref{thm.a1} follows directly from Theorem~\ref{thm.b1}. 

Let us prove 
Theorem~\ref{thm.a1}(iv).  
Let us fix a closed interval $\Delta_0\subset\Delta\setminus\calN$.
Since $A_0(\lambda)$ depends continuously on $\lambda\in\Delta$, 
by Proposition~\ref{prp.b1}(ii) the projection 
$E((-\infty,0);J^{-1}+A_0(\lambda))$ depends continuously 
on $\lambda\in\Delta_0$. 
Then by Lemma~\ref{lma.b2}, the index
$\Xi(0;J^{-1}+A_0(\lambda),J^{-1})$ is constant for $\lambda\in\Delta_0$. 
By the identity \eqref{a17},  the index 
$\Xi(\lambda;H,H_0)$ is constant for $\lambda\in\Delta_0$, as required.

\section{Proof of Theorem~\ref{thm.b1} }\label{sec.bb}

\subsection{Notation and preliminaries}\label{sec.bb1}
Throughout the rest of the paper, we assume the hypothesis of Theorem~\ref{thm.b1}.
For a function $\omega\in L^\infty(\R)$, $\omega\geq0$, we denote
$G(\omega)=G\omega(H_0)^{1/2}$. 
Since $\omega(H_0)$ is a bounded operator, we have  by \eqref{a8}
$$
\Dom (H_0-aI)^{1/2}\subset \Dom G(\omega)
\quad\text{ and } \quad
G(\omega) (H_0-aI)^{-1/2} \text{ is compact}
$$
for any $a<\inf \sigma(H_0)$. Thus, we can define
the selfadjoint operator 
$$
H(\omega)=H_0+G(\omega)^* J G(\omega)
$$
as a form sum and the compact operators 
\begin{equation}
\begin{split}
T_0(z;\omega)
&=G(\omega) R_0(z) G(\omega)^*
=G\omega(H_0)R_0(z)G^*,
\\
T(z;\omega)&=G(\omega) (H(\omega)-zI)^{-1} G(\omega)^*. 
\end{split}
\label{b14}
\end{equation}
The definition of $T_0(z;\omega)$ and $T(z;\omega)$ can 
be made more rigorous similarly to \eqref{a10}, \eqref{a11}.  
If the limit $T_0(\lambda+i0;\omega)$ exists, we also denote
$A_0(\lambda;\omega)=\Re T_0(\lambda+i0;\omega)$. 

Let $\chi_\delta$ be the characteristic function of the interval 
$(-\delta,\delta)$ in $\R$, where $\delta\in(0,1)$ will be chosen 
later. For $s\in[0,1]$, we set $\omega_s(x)=1-s\chi_\delta(x)$. 
Let us discuss the existence of the limit $T_0(\lambda+i0;\omega_s)$. 
First note that $R_0(z)(1-\chi_\delta(H_0))$ is analytic in $z$ 
for $\abs{\Re z}<\delta$. It follows that $T_0(z; \omega_1)$ 
is analytic in $z$ for $\abs{\Re z}<\delta$.
Next, writing $\chi_\delta=1-\omega_1$, we get
\begin{equation}
T_0(z;\omega_s)
=
T_0(z)-sT_0(z;\chi_\delta)
=
(1-s)T_0(z)+sT_0(z;\omega_1).
\label{b15}
\end{equation}
By the hypothesis of Theorem~\ref{thm.b1},  it follows that
for any $\delta'<\delta$, the operator   
$T_0(z;\omega_s)$ is uniformly continuous 
in the rectangle $\abs{\Re z}<\delta'$, $\Im z\in(0,1)$ 
in the operator norm. 
In particular, the limit $T_0(\lambda+i0;\omega_s)$ 
exists for all $\lambda\in(-\delta,\delta)$.

\subsection{The strategy of the proof of Theorem~\ref{thm.b1}}\label{sec.bb1a}
Our aim is to show that for all sufficiently small $\delta>0$
and all  $s\in[0,1]$ one has
\begin{equation}
\Xi(0;H(\omega_s),H_0)
=
-\Xi(0;J^{-1}+A_0(0;\omega_s),J^{-1}).
\label{b18}
\end{equation}
Clearly, for $s=0$ this is exactly the 
required identity \eqref{b12}. 
In order to prove \eqref{b18}, we first show that 
if $\delta$ is sufficiently small then the operator 
$J^{-1}+A_0(0;\omega_s)$ is invertible for all 
$s\in[0,1]$. Using this fact, the stability of index and 
Proposition~\ref{prp.b2}, we prove that both sides of 
\eqref{b18} are independent of $s\in[0,1]$. 
Thus it suffices to prove \eqref{b18} for $s=1$. 
Finally, for $s=1$ we derive the identity \eqref{b18} 
from the Birman-Schwinger principle (Proposition~\ref{prp.a1}).

\subsection{The limit $\delta\to0$}\label{sec.bb2}

Let us discuss the choice of $\delta$. 
\begin{lemma}\label{lma.b10}
Assume \eqref{a8} and suppose that $T_0(z)$ 
is uniformly continuous in the rectangle 
$\abs{\Re z}<1$, $\Im z\in(0,1)$. 
Then 
$$
\norm{A_0(0;\chi_\delta)}\to0
\quad\text{ as } \delta\to+0.
$$
\end{lemma}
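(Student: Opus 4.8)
The plan is to reduce the statement to the Birman--Schwinger-type identity $A_0(0;\chi_\delta)=A_0(0)-T_0(0;\omega_1)$ and then to show that this ``off--$(-\delta,\delta)$'' part $T_0(0;\omega_1)=G\,\omega_1(H_0)R_0(0)\,G^*$ recovers all of $A_0(0)$ in operator norm as $\delta\to+0$. First I would write $\chi_\delta=1-\omega_1$, so that by \eqref{b15} (or directly, since $\chi_\delta(H_0)=I-\omega_1(H_0)$ commutes with $R_0(z)$) one has $T_0(z;\chi_\delta)=T_0(z)-T_0(z;\omega_1)$. As recalled in Section~\ref{sec.bb1}, $R_0(z)(1-\chi_\delta(H_0))$, and hence $T_0(z;\omega_1)$, is analytic in $z$ for $\abs{\Re z}<\delta$; being self-adjoint for real $z$ near $0$, its boundary value $T_0(i0;\omega_1)=T_0(0;\omega_1)$ is self-adjoint and equals its own real part. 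Taking real parts of the boundary values at $z=i0$ gives
\[
A_0(0;\chi_\delta)=A_0(0)-T_0(0;\omega_1),
\]
so the lemma is equivalent to $\norm{A_0(0)-T_0(0;\omega_1)}\to0$ as $\delta\to+0$.

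Next I would compute in the spectral representation of $H_0$. Set $\varphi_\e(t)=t(t^2+\e^2)^{-1}=\Re(t-i\e)^{-1}$ and $g_\delta(t)=t^{-1}\mathbf 1_{\{\abs t\ge\delta\}}(t)$; then $\Re T_0(i\e)=G\varphi_\e(H_0)G^*$ and $T_0(0;\omega_1)=Gg_\delta(H_0)G^*$ (both understood via the insertion of $(H_0-aI)^{\mp1/2}$ as in \eqref{a10}), and since $T_0(i\e)\to T_0(i0)$ in norm, $A_0(0)=\lim_{\e\to+0}G\varphi_\e(H_0)G^*$. Because $\varphi_\e(t)-g_\delta(t)\to0$ uniformly on $\{\abs t\ge\delta\}$ as $\e\to+0$ (also after multiplication by $t-a$), the spectral region $\{\abs{H_0}\ge\delta\}$ contributes nothing in the limit, whence
\[
A_0(0;\chi_\delta)=\lim_{\e\to+0}G\,\varphi_\e(H_0)\chi_\delta(H_0)\,G^*.
\]
Since $A_0(0;\chi_\delta)$ is self-adjoint, it suffices to estimate its quadratic form; using \eqref{b16} (which shows that the spectral measure of $H_0$ seen through $G$ is absolutely continuous on $\Delta$ with bounded density proportional to $B_0(\cdot)$), for $u\in\calK$,
\[
\bigl(A_0(0;\chi_\delta)u,u\bigr)=\frac1\pi\lim_{\e\to+0}\int_{-\delta}^{\delta}\varphi_\e(t)\,\bigl(B_0(t)u,u\bigr)\,dt .
\]
As $\varphi_\e$ is odd, $\int_{-\delta}^{\delta}\varphi_\e=0$; subtracting the constant $\bigl(B_0(0)u,u\bigr)$ and symmetrising in $t$,
\[
\bigl(A_0(0;\chi_\delta)u,u\bigr)=\frac1\pi\lim_{\e\to+0}\int_{0}^{\delta}\varphi_\e(t)\,\bigl(\bigl(B_0(t)-B_0(-t)\bigr)u,u\bigr)\,dt .
\]

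The remaining task---and the main obstacle---is to bound the last expression by $\kappa(\delta)\norm{u}^2$ with $\kappa(\delta)\to0$ as $\delta\to+0$, uniformly over $u$. The uniform continuity of $T_0$ on the rectangle furnishes one modulus of continuity $\omega$ for $\lambda\mapsto B_0(\lambda)$ near $0$, so the integrand is dominated in absolute value by $\varphi_\e(t)\,\omega(2t)\,\norm{u}^2\le t^{-1}\omega(2t)\norm{u}^2$. The delicate point is that this pointwise bound is not itself sufficient (letting $\e\to+0$ would produce $\int_0^\delta\omega(2t)\,t^{-1}dt$, which need not converge for a merely uniformly continuous $B_0$), so one has to exploit the sign cancellation in the principal-value integral rather than estimate the integrand pointwise---for instance through a suitable integration by parts or a Fej\'er-type averaging---using also that $A_0(0;\chi_\delta)$ is already known to exist as a bounded operator from the reduction above. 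Granting this estimate, the last two displays yield $\norm{A_0(0;\chi_\delta)}=\norm{A_0(0)-T_0(0;\omega_1)}\le\kappa(\delta)/\pi\to0$; everything preceding this last step is routine bookkeeping with Stone's formula, \eqref{b15}, and the norm-continuity of $T_0(\,\cdot\,;\omega_1)$ across the real axis near $0$.
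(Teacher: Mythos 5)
Your reduction is along the same lines as the paper's: using \eqref{b16} and Stone's formula, both you and the paper arrive at
\[
A_0(0;\chi_\delta)=\lim_{\varepsilon\to+0}\frac1\pi\int_0^\delta \varphi_\varepsilon(t)\bigl(B_0(t)-B_0(-t)\bigr)\,dt,
\]
i.e.\ the principal-value integral of $\bigl(B_0(t)-B_0(-t)\bigr)/(\pi t)$ over $(0,\delta)$ (your first paragraph's detour through $T_0(0;\omega_1)$ is a harmless repackaging of the same facts). You also correctly diagnose the obstruction: the pointwise bound $t^{-1}\omega(2t)$ coming from uniform continuity need not be integrable. But at that point the proof has a genuine hole --- you explicitly write ``Granting this estimate'' --- and the devices you gesture at (integration by parts, Fej\'er averaging) are not what is needed.

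The missing step is much more elementary than that, and uses no further cancellation. Write
\[
\mathcal A(\delta_1,\delta_2)=\frac1\pi\int_{\delta_1}^{\delta_2}\frac{B_0(t)-B_0(-t)}{t}\,dt,
\qquad 0<\delta_1<\delta_2<1,
\]
which is a genuine (Bochner) integral with a bounded integrand on $[\delta_1,\delta_2]$. The representation you derived says exactly that for each fixed $\delta\in(0,1)$ the improper integral converges in operator norm:
\[
\lim_{\varepsilon\to+0}\mathcal A(\varepsilon,\delta)=A_0(0;\chi_\delta).
\]
In particular $\lim_{\varepsilon\to+0}\mathcal A(\varepsilon,1/2)=A_0(0;\chi_{1/2})$ exists. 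Since $\mathcal A(\varepsilon,\delta)=\mathcal A(\varepsilon,1/2)-\mathcal A(\delta,1/2)$ for $\varepsilon<\delta<1/2$, letting $\varepsilon\to+0$ gives
\[
A_0(0;\chi_\delta)=A_0(0;\chi_{1/2})-\mathcal A(\delta,1/2), \qquad 0<\delta<1/2.
\]
Now send $\delta\to+0$: by the very definition of the convergent improper integral, $\mathcal A(\delta,1/2)\to A_0(0;\chi_{1/2})$, hence $\norm{A_0(0;\chi_\delta)}\to0$. In other words, $A_0(0;\chi_\delta)$ is precisely the \emph{tail} of a convergent improper integral, and tails of convergent improper integrals tend to zero --- no quantitative modulus of continuity, integration by parts, or summability method is required. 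That is the argument the paper uses, and it is the one step your outline leaves unproved.
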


Using Lemma~\ref{lma.b10}, we will choose $\delta$ 
such that
\begin{equation}
\norm{A_0(0; \chi_\delta)}<\frac12 \norm{(J^{-1}+A_0(0))^{-1}}^{-1}.
\label{b13}
\end{equation}
Then 
\begin{equation}
J^{-1}+A_0(0;\omega_s)=J^{-1}+A_0(0)-sA_0(0;\chi_\delta)
\text{ is invertible for all $s\in[0,1]$.}
\label{b13a}
\end{equation}
This suffices for our construction.

\begin{proof}[Proof of Lemma~\ref{lma.b10}]
1.
From \eqref{b16} we get that 
$\frac{d}{d\lambda}F_0(\lambda)=\frac1\pi B_0(\lambda)$
for any $\lambda\in\Delta$. By the spectral theorem, it follows that 
\begin{equation}
T_0(z;\chi_\delta)
=
\int_{-\delta}^\delta
(\lambda-z)^{-1}dF_0(\lambda)
=
\frac1\pi
\int_{-\delta}^\delta
(\lambda-z)^{-1} B_0(\lambda)d\lambda
\label{c2}
\end{equation}
for all $\Im z>0$.

2. By \eqref{c2}, we have
$$
A_0(0;\chi_\delta)
=\lim_{\varepsilon\to+0}
\frac1\pi
\int_{-\delta}^\delta 
\frac{B_0(\lambda)\lambda}{\lambda^2+\varepsilon^2}d\lambda,
$$
where, by our assumptions, the limit exists in the operator norm.
Next, denote
$$
\mathcal A(\delta_1,\delta_2)=
\frac1\pi \int_{\delta_1}^{\delta_2}
\frac{B_0(\lambda)-B_0(-\lambda)}{\lambda}d\lambda, 
\quad 0<\delta_1<\delta_2<1.
$$
Let us prove that 
\begin{equation}
\lim_{\varepsilon\to+0}
\left\|
\frac1\pi\int_{-\delta}^\delta 
\frac{\lambda B_0(\lambda)}{\lambda^2+\varepsilon^2}d\lambda
-
\mathcal A(\varepsilon,\delta)
\right\|
=0
\label{c20}
\end{equation}
for any $\delta>0$. 
This is a well known  argument, see e.g. \cite[Lemma~VI.1.2]{SteinWeiss}. 
Let 
$$
\varphi(\lambda)=
\begin{cases}
\frac{\lambda}{\lambda^2+1} & \text{ if $\abs{\lambda}<1$,}
\\
\frac{\lambda}{\lambda^2+1}-\frac1\lambda & \text{if $1\leq\abs{\lambda}$,}
\end{cases}
$$
and $\varphi_\varepsilon(\lambda)=\varepsilon^{-1}\varphi(\lambda/\varepsilon)$, 
$\varepsilon>0$. Note that $\varphi$ is odd and $\varphi\in L^1(\R)$. 
We have
\begin{multline}
\int_{-\delta}^\delta 
\frac{\lambda B_0(\lambda)}{\lambda^2+\varepsilon^2}d\lambda
-
\pi\mathcal A(\varepsilon,\delta)
=
\int_\R 
\frac{\lambda B_0(\lambda)\chi_\delta(\lambda)}{\lambda^2+\varepsilon^2}d\lambda
-
\int_{\varepsilon<\abs{\lambda}}
\frac{B_0(\lambda)\chi_\delta(\lambda)}{\lambda}d\lambda
\\
=
\int_\R B_0(\lambda)\chi_\delta(\lambda)\varphi_\varepsilon(\lambda)d\lambda
=
\int_\R B_0(\lambda)\chi_\delta(\lambda)\varphi_\varepsilon(\lambda)d\lambda
-
B_0(0)\int_\R \chi_\delta(\lambda)\varphi_\varepsilon(\lambda)d\lambda
\\
=
\int_\R (B_0(\lambda)-B_0(0))\chi_\delta(\lambda)\varphi_\varepsilon(\lambda)d\lambda.
\label{c3}
\end{multline}
Using the fact that $B_0(\lambda)$ is continuous at $\lambda=0$ in the operator norm, 
by a standard 
argument one checks that the integral 
in the r.h.s. of \eqref{c3} tends to zero in the operator norm as $\varepsilon\to+0$.
This proves \eqref{c20}.

3. 
By \eqref{c20}, the limit
$\lim_{\varepsilon\to+0}\mathcal A(\varepsilon,\delta)$
exists in the operator norm and equals $A_0(0;\chi_\delta)$. 
We can rewrite the last statement as
$$
\lim_{\varepsilon\to+0}
(\mathcal A(\varepsilon,1/2)-\mathcal A(\delta,1/2))
=
A_0(0;\chi_\delta), \quad \delta<1/2.
$$
Now it is clear that
$$
\lim_{\delta\to+0}
A_0(0;\chi_\delta)
=
\lim_{\delta\to+0}
\lim_{\varepsilon\to+0}
(\mathcal A(\varepsilon,1/2)-\mathcal A(\delta,1/2))
=0
$$
in the operator norm, as required. 
\end{proof}

\subsection{The case $s=1$}\label{sec.bb3}

\begin{lemma}\label{thm.b5}
Assume \eqref{a8} and suppose that $T_0(z)$ 
is uniformly continuous in the rectangle 
$\abs{\Re z}<1$, $\Im z\in(0,1)$. 
Assume that $J^{-1}+A_0(0)$ is invertible and 
let $\delta>0$ be chosen as in \eqref{b13}. 
Then the index 
$\Xi(0;H(\omega_1),H_0)$ exists and 
\begin{equation}
\Xi(0;H(\omega_1),H_0)
=
-\Xi(0;J^{-1}+A_0(0;\omega_1),J^{-1}).
\label{b6}
\end{equation}
\end{lemma}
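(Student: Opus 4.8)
The plan is to exploit the fact that $\omega_1=1-\chi_\delta$ vanishes on $(-\delta,\delta)$, which removes the part of $\sigma(H_0)$ near the point $0$ and reduces the assertion to the classical Birman--Schwinger principle of Proposition~\ref{prp.a1}. Put $P=\chi_\delta(H_0)=E((-\delta,\delta);H_0)$, so that $G(\omega_1)=G(I-P)$. Since $P$ commutes with $H_0$ and with $(H_0-aI)^{1/2}$, the form domain of $H_0$ splits along the orthogonal decomposition $\calH=\Ran P\oplus\Ran(I-P)$; moreover the form $u\mapsto(JG(\omega_1)u,G(\omega_1)u)$ annihilates $\Ran P$ and has no cross terms, so $H_0=H_0|_{\Ran P}\oplus\widetilde H_0$ and $H(\omega_1)=H_0|_{\Ran P}\oplus\widetilde H$, where $\widetilde H_0=H_0|_{\Ran(I-P)}$ and $\widetilde H$ is the form sum of $\widetilde H_0$ with $\widetilde V=\widetilde G^*J\widetilde G$, $\widetilde G=G|_{\Ran(I-P)}$. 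One checks that the pair $(\widetilde H_0,\widetilde H)$ in the Hilbert space $\Ran(I-P)$ again satisfies \eqref{a8} — the operator $\widetilde G(\widetilde H_0-aI)^{-1/2}$ is the restriction of the compact operator $G(H_0-aI)^{-1/2}(I-P)$ — that $0\notin\sigma(\widetilde H_0)$ since $\sigma(\widetilde H_0)\subset(-\infty,-\delta]\cup[\delta,\infty)$, and, the delicate point, that the sandwiched resolvent of this pair at the real point $z=0$ coincides with $T_0(0;\omega_1)=A_0(0;\omega_1)$, using that $(I-P)R_0(z)$ extends analytically across $z=0$.

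Next I would establish that $0\notin\sigma(\widetilde H)$. By the choice \eqref{b13} of $\delta$, the operator $J^{-1}+A_0(0;\omega_1)=J^{-1}+A_0(0)-A_0(0;\chi_\delta)$ is invertible; this is precisely \eqref{b13a} for $s=1$. Applying identity \eqref{a11a} of Proposition~\ref{prp.a1} to the pair $(\widetilde H_0,\widetilde H)$ at $\lambda=0$, which is legitimate because $0\notin\sigma(\widetilde H_0)$, yields $\dim\Ker\widetilde H=\dim\Ker(J^{-1}+A_0(0;\omega_1))=0$. Since in addition $\sigma_\ess(\widetilde H)=\sigma_\ess(\widetilde H_0)\subset(-\infty,-\delta]\cup[\delta,\infty)$ does not contain $0$, we conclude $0\notin\sigma(\widetilde H)$.

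Thus the pair $(\widetilde H_0,\widetilde H)$ satisfies the hypotheses of Proposition~\ref{prp.a1} at $\lambda=0$ with $0\notin\sigma(\widetilde H_0)\cup\sigma(\widetilde H)$. By Proposition~\ref{rmk.a1} the index $\Xi(0;\widetilde H,\widetilde H_0)$ exists, and since $E((-\infty,0);H(\omega_1))$ and $E((-\infty,0);H_0)$ coincide on the summand $\Ran P$, their difference equals $0\oplus\bigl(E((-\infty,0);\widetilde H_0)-E((-\infty,0);\widetilde H)\bigr)$; hence $\Xi(0;H(\omega_1),H_0)$ exists and equals $\Xi(0;\widetilde H,\widetilde H_0)$. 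Finally, identity \eqref{a12} of Proposition~\ref{prp.a1}, applied to the pair $(\widetilde H_0,\widetilde H)$ at $\lambda=0$, gives $\Xi(0;\widetilde H,\widetilde H_0)=-\Xi(0;J^{-1}+A_0(0;\omega_1),J^{-1})$, which is exactly \eqref{b6}.

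The step I expect to be the main obstacle is the reduction itself: verifying carefully that the form sum defining $H(\omega_1)$ really does respect the splitting $\Ran P\oplus\Ran(I-P)$, that the restricted pair inherits hypothesis \eqref{a8}, and — the genuinely delicate identification — that the sandwiched resolvent of the restricted pair at $z=0$ equals $A_0(0;\omega_1)$. Once these preliminaries are in place, the rest is a direct appeal to the already-established Birman--Schwinger principle together with the elementary additivity of the index of a pair of projections under orthogonal direct sums (the $\Ran P$ blocks being equal, they contribute nothing).
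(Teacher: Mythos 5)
Your proof is correct and follows essentially the same route as the paper's: both reduce to the invariant subspace $\Ran(I-P)=\Ran E(\R\setminus(-\delta,\delta);H_0)$, where $H_0$ has a spectral gap around $0$, identify the sandwiched resolvent of the restricted pair at $z=0$ with $A_0(0;\omega_1)$ via analytic continuation, and then apply Proposition~\ref{prp.a1} together with the additivity of the index under the orthogonal direct sum (the $\Ran P$ blocks cancelling). The only cosmetic difference is that you spell out the verification of hypothesis~\eqref{a8} for the restricted pair and the non-membership $0\notin\sigma(\widetilde H)$ in somewhat more detail than the paper does.
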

\begin{proof}
1. 
Let $\calH_0=\Ran E(\R\setminus(-\delta,\delta);H_0)$. 
It is easy to see that the subspace $\calH_0$ reduces both $H_0$ and
$H(\omega_1)$ (i.e. both $H_0$ and $H(\omega_1)$ commute 
with $E(\R\setminus(-\delta,\delta);H_0)=\omega_1(H_0)$). 
Along with $H_0$, $H(\omega_1)$, $G(\omega_1)$, 
consider the operators 
$h_0=H_0|_{\calH_0}$, $h=H(\omega_1)|_{\calH_0}$, 
$g=G(\omega_1)|_{\calH_0}$.
We have $(-\delta,\delta)\cap \sigma(h_0)=\varnothing$. 
Since $h=h_0+g^*Jg$ and $g^*Jg$ is $h_0$-form compact, 
we also have 
$(-\delta,\delta)\cap\sigma_\ess(h)=\varnothing$.
Next, let $t_0(z)=g(h_0-zI)^{-1}g^*$.
Note that $t_0(z)=T_0(z;\omega_1)$, $\Im z\not=0$, 
and so 
\begin{equation}
t_0(0)=\Re t_0(0)=A_0(0; \omega_1).
\label{c0}
\end{equation}
By our choice \eqref{b13} of $\delta$, it follows (cf. \eqref{b13a}) that 
the operator $J^{-1}+t_0(0)$ 
is invertible. 
Thus, we can apply Proposition~\ref{prp.a1}
to the pair of operators $h_0$, $h$. 
This yields that $0\notin\sigma(h)$ and 
\begin{equation}
\Xi(0;h,h_0)
=
-\Xi(0;J^{-1}+t_0(0),J^{-1}),
\label{c1}
\end{equation}
where the indices $\Xi$ on both sides exist. 

2. 
Let us show that \eqref{c1} is equivalent to \eqref{b6}. 
By \eqref{c0}, the
r.h.s. of \eqref{c1} coincides with the r.h.s. of \eqref{b6}. 
Consider the l.h.s.
With respect to the orthogonal decomposition
$\calH=\calH_0\oplus\calH_0^\perp$ we have
(here and in what follows $\R_-=(-\infty,0)$):
\begin{align*}
E(\R_-;H_0)
&=
E(\R_-;h_0)\oplus E((-\delta,0);H_0),
\\
E(\R_-;H(\omega_1))
&=
E(\R_-;h)\oplus E((-\delta,0);H_0),
\end{align*}
and therefore
$$
E(\R_-;H(\omega_1))-E(\R_-;H_0)
=
(E(\R_-;h)-E(\R_-;h_0))\oplus 0.
$$
It follows that the index $\Xi(0;H(\omega_1),H_0)$ 
exists if and only if $\Xi(0;h,h_0)$ exists and if 
these indices exist, they coincide. 
Thus, from \eqref{c1} we get that $\Xi(0;H(\omega_1),H_0)$ 
exists and \eqref{b6} holds true. 
\end{proof}

\subsection{The proof of Theorem~\ref{thm.b1}}\label{sec.bb4}

The key element in our proof is 
\begin{theorem}\label{thm.b4}
Assume \eqref{a8} and suppose that $T_0(z)$ 
is uniformly continuous in the rectangle 
$\abs{\Re z}<1$, $\Im z\in(0,1)$. 
Assume that $J^{-1}+A_0(0)$ is invertible and 
let $\delta>0$ be chosen as in \eqref{b13}. 
Then the spectral projections 
$$
E(\R_-;H(\omega_s))
\quad\text{ and }\quad
E(\R_-; J^{-1}+A_0(0;\omega_s))
$$
are continuous in $s\in[0,1]$ in the operator norm. 
\end{theorem}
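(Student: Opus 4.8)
The plan is to establish the two continuity statements separately, each by an argument of the form "norm resolvent convergence $\Rightarrow$ continuity of spectral projections", using Proposition~\ref{prp.b1}(ii). For the second family, $s\mapsto A_0(0;\omega_s)$ is affine in $s$ by \eqref{b15} (indeed $A_0(0;\omega_s)=A_0(0)-sA_0(0;\chi_\delta)$), hence norm continuous; consequently $s\mapsto J^{-1}+A_0(0;\omega_s)$ is norm continuous, in particular it converges in the norm resolvent sense as $s'\to s$. By our choice of $\delta$ in \eqref{b13}, the endpoint $0$ is not in the spectrum of $J^{-1}+A_0(0;\omega_s)$ for any $s\in[0,1]$ (this is \eqref{b13a}). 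Applying Proposition~\ref{prp.b1}(ii) with $a$ slightly negative and $b=0$ gives that $E(\R_-;J^{-1}+A_0(0;\omega_s))$ is norm continuous in $s$. A small point: one should note that the spectra of $J^{-1}+A_0(0;\omega_s)$ for $s$ in a neighbourhood of a fixed $s_0$ are uniformly bounded away from $0$ near $0$ (again by \eqref{b13}), so the choice of the auxiliary point $a<0$ with $a\notin\sigma(\cdot)$ can be made uniformly; this is what legitimises the application of Proposition~\ref{prp.b1}(ii).

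For the first family, the goal is to show that $H(\omega_s)$ converges to $H(\omega_{s_0})$ in the norm resolvent sense as $s\to s_0$, and then that $0$ stays out of the spectrum, so that Proposition~\ref{prp.b1}(ii) again yields norm continuity of $E(\R_-;H(\omega_s))$. For the norm resolvent convergence I would work on the reducing subspace decomposition $\calH=\calH_0\oplus\calH_0^\perp$ (with $\calH_0=\Ran E(\R\setminus(-\delta,\delta);H_0)$) as in the proof of Lemma~\ref{thm.b5}: on $\calH_0^\perp$ the operator $H(\omega_s)$ is just $H_0|_{\calH_0^\perp}$, independent of $s$, so only the part $h_s=H(\omega_s)|_{\calH_0}$ matters. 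On $\calH_0$ we have a genuine spectral gap $(-\delta,\delta)\cap\sigma(h_0)=\varnothing$, and $h_s=h_0+g^*JG(\omega_s)|_{\calH_0}\cdots$; more cleanly, I would use the resolvent identity \eqref{a10a}/\eqref{b4}: for $z$ near $0$ in the gap,
\begin{equation}
(H(\omega_s)-z)^{-1}-(H_0-z)^{-1}
=
-(G(\omega_s)R_0(\bar z))^*\bigl(J-JT(z;\omega_s)J\bigr)(G(\omega_s)R_0(z)),
\label{eq.prop.b4.a}
\end{equation}
and then \eqref{b4} expresses $T(z;\omega_s)$ through $(J^{-1}+T_0(z;\omega_s))^{-1}$. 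The invertibility of $J^{-1}+A_0(0;\omega_s)$ established above, together with Lemma~\ref{lma.b1} (note $\Im T_0(0+i0;\omega_s)=B_0(0;\omega_s)\geq0$ since it is a positive measure density by \eqref{c2}), gives that $J^{-1}+T_0(z;\omega_s)$ is boundedly invertible for $z$ near $0$ with $\Im z\geq 0$, uniformly for $s$ near $s_0$; this forces $0\notin\sigma(H(\omega_s))$ for $s$ near $s_0$ as well. Since $T_0(z;\omega_s)$ depends on $s$ affinely (hence norm continuously, uniformly in $z$) by \eqref{b15}, the right-hand side of \eqref{eq.prop.b4.a} is norm continuous in $s$; choosing a fixed $z$ in the gap then yields norm resolvent continuity of $H(\omega_s)$ near $s_0$.

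With norm resolvent continuity of $s\mapsto H(\omega_s)$ and the fact that $0$ stays out of $\sigma(H(\omega_s))$ locally in $s$ (one can pick a uniform $a<0$ in the gap with $a\notin\sigma(H(\omega_s))$, since the gap persists), Proposition~\ref{prp.b1}(ii) gives norm continuity of $E((a,0);H(\omega_s))$; combined with the $s$-independent part $E(\R_-\cap(-\infty,a];H(\omega_s))=E(\R_-\cap(-\infty,a];H_0)$ sitting in $\calH_0^\perp$ (which is constant), this gives norm continuity of $E(\R_-;H(\omega_s))$ in $s\in[0,1]$, as claimed. The main obstacle is the first family: one must carefully track that all the resolvent-type operators above are well-defined and norm continuous in $s$ \emph{uniformly in $z$ in a fixed neighbourhood of $0$ in the closed upper half-plane}, which is exactly where the uniform continuity hypothesis on $T_0(z)$ and the quantitative choice \eqref{b13} of $\delta$ enter; once that uniform invertibility is in hand, the rest is the routine norm-resolvent-to-spectral-projection passage.
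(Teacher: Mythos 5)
Your treatment of the second family $E(\R_-;J^{-1}+A_0(0;\omega_s))$ is correct and matches the paper: $A_0(0;\omega_s)$ is affine in $s$, $0\notin\sigma(J^{-1}+A_0(0;\omega_s))$ by \eqref{b13a}, and Proposition~\ref{prp.b1}(ii) gives norm continuity. The argument for the first family $E(\R_-;H(\omega_s))$, however, contains a genuine gap that is fatal to the route you chose.

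The central problem is your assertion that the invertibility of $J^{-1}+T_0(z;\omega_s)$ for $z$ near $0$ ``forces $0\notin\sigma(H(\omega_s))$ for $s$ near $s_0$''. This is false. Invertibility of $J^{-1}+T_0(0+i0;\omega_s)$ rules out $0$ being an \emph{eigenvalue} of $H(\omega_s)$ (cf.\ Lemma~\ref{lma.c2}, which yields $\Ker H(\omega_s)=\Ker H_0$), but it says nothing about the continuous spectrum. The whole setting of Theorem~\ref{thm.b1} is that $0$ may lie in $\sigma_\ess(H_0)$, and since the perturbation is $H_0$-form compact, $\sigma_\ess(H(\omega_s))=\sigma_\ess(H_0)$ for all $s$; so $0$ remains in the essential spectrum of $H(\omega_s)$ for every $s\in[0,1]$. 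Consequently the hypothesis $b\notin\sigma(A)$ in Proposition~\ref{prp.b1}(ii) is violated at $b=0$, and the ``routine norm-resolvent-to-spectral-projection passage'' is simply unavailable. This is precisely why the theorem is nontrivial and why the paper does something more elaborate.

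A secondary issue: you invoke the reducing decomposition $\calH=\calH_0\oplus\calH_0^\perp$ with $\calH_0=\Ran E(\R\setminus(-\delta,\delta);H_0)$ and claim that on $\calH_0^\perp$ the operator $H(\omega_s)$ is $H_0|_{\calH_0^\perp}$, independent of $s$. This holds only at $s=1$. For $s<1$ one has $\omega_s(H_0)^{1/2}=P_0+\sqrt{1-s}\,P_0^\perp$ with $P_0=E(\R\setminus(-\delta,\delta);H_0)$, so $G(\omega_s)^*JG(\omega_s)$ has nonzero cross terms between $\calH_0$ and $\calH_0^\perp$, and $H(\omega_s)$ does not reduce along this decomposition. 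The reduction argument is therefore usable only in the $s=1$ step (as in Lemma~\ref{thm.b5}), not along the whole homotopy.

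What the paper actually does is to accept that $0\in\sigma_\ess(H(\omega_s))$ and to split the characteristic function: writing $\chi_-=\psi-\zeta$ with $\zeta(x)=\frac1\pi\tan^{-1}(1/x)$, one gets
$E(\R_-;H(\omega_s))=\psi(H(\omega_s))-\zeta(H(\omega_s))+\tfrac12 E(\{0\};H_0)$.
The term $\psi(H(\omega_s))$ is handled by Proposition~\ref{prp.b1}(i) (continuous function vanishing at $\pm\infty$ after a cutoff below the uniform lower bound) together with norm resolvent continuity of $H(\omega_s)$, which your resolvent-identity computation does establish correctly. The whole difficulty is concentrated in the term $\zeta(H(\omega_s))$, whose continuity in $s$ is proved (Lemma~\ref{lma.c3}, Sections~\ref{sec.c4}--\ref{sec.c5}) via the integral representation $\zeta(x)=\frac1{2\pi}\int_{-1}^1(x-it)^{-1}\,dt$ and a Hilbert-transform boundedness argument for the auxiliary operators $X_\pm$. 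That step has no analogue in your proposal and cannot be replaced by Proposition~\ref{prp.b1}(ii).
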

Theorem~\ref{thm.b4} is proven in Section~\ref{sec.c}.

\begin{proof}[Proof of Theorem~\ref{thm.b1}] 
Let $\delta$ be chosen as in \eqref{b13}. 

1.
By Proposition~\ref{rmk.a1}, the index  
$\Xi(0;J^{-1}+A_0(0;\omega_s),J^{-1})$ exists for all $s$. 
Thus, by Lemma~\ref{lma.b2} and Theorem~\ref{thm.b4}, 
the index $\Xi(0;J^{-1}+A_0(0;\omega_s), J^{-1})$ is independent of 
$s\in[0,1]$.

2. 
Let us prove  that the index $\Xi(0;H(\omega_s),H_0)$ 
exists for any $s\in[0,1]$. 
We will use part (ii) of Theorem~\ref{thm.a1}
(this is not a circular argument: part (ii) has already been proven 
in Section~\ref{sec.b2}). 
Let us apply Theorem~\ref{thm.a1}(ii) with the operators  $H_0$, $H(\omega_s)$,  $G(\omega_s)$ 
instead of $H_0$ $H$, $G$. 
As discussed in Section~\ref{sec.bb1}, for any $\delta'<\delta$
the operator $T_0(z;\omega_s)$ is 
uniformly continuous in $z$ for $\abs{\Re z}<\delta'$, 
$\Im z\in(0,1)$. 
Thus, the hypothesis of Theorem~\ref{thm.a1}  is satisfied
with $\Delta=(-\delta',\delta')$. 
By \eqref{b13a}, we have $0\notin\calN$ 
and so the index $\Xi(0;H(\omega_s),H_0)$ exists for any $s\in[0,1]$. 

3. From the previous step of the proof, using  Lemma~\ref{lma.b2} 
and Theorem~\ref{thm.b4} we obtain that 
$\Xi(0;H(\omega_s), H_0)$ is independent of 
$s\in[0,1]$.

4. Using Lemma~\ref{thm.b5}, we obtain
\begin{multline*}
\Xi(0;H,H_0)
=\Xi(0;H(\omega_0),H_0)
=\Xi(0;H(\omega_1),H_0)
=-\Xi(0;J^{-1}+A_0(0;\omega_1),J^{-1})
\\
=-\Xi(0;J^{-1}+A_0(0;\omega_0),J^{-1})
=-\Xi(0;J^{-1}+A_0(0),J^{-1}),
\end{multline*}
which proves \eqref{b12}.
Of course, this argument also shows that \eqref{b18} holds true for any $s\in[0,1]$. 
\end{proof}

\section{Proof of Theorem~\ref{thm.b4}}\label{sec.c}

\subsection{Estimates for $T(z;\omega_s)$}\label{sec.c2}
We use the notation \eqref{b14}. 
\begin{lemma}\label{lma.c1}
Assume \eqref{a8} and suppose that $T_0(z)$ 
is uniformly continuous in the rectangle 
$\abs{\Re z}<1$, $\Im z\in(0,1)$. 
Assume that $J^{-1}+A_0(0)$ is invertible and 
let $\delta>0$ be chosen as in \eqref{b13}. 
Then for some 
$C>0$ the estimates 
\begin{gather}
\norm{T(it;\omega_s)}\leq C, 
\quad t\in(0,1), \quad s\in[0,1], 
\label{c4}
\\
\norm{T(it;\omega_s)-T(it;\omega_r)}\leq C\abs{s-r}, 
\quad t\in(0,1), \quad s,r\in[0,1], 
\label{c5}
\end{gather}
hold true.
\end{lemma}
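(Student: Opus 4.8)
The plan is to reduce both estimates to a single uniform bound. The starting point is the $\omega_s$-analogue of \eqref{b4}, namely
$$
T(z;\omega_s)=J^{-1}-J^{-1}(J^{-1}+T_0(z;\omega_s))^{-1}J^{-1},
$$
which holds for every $\Im z\neq0$ and every $s\in[0,1]$, being just the identity \eqref{a10b} applied to the pair $H_0$, $H(\omega_s)$ with the factorisation $G(\omega_s)^*JG(\omega_s)$. In view of this identity, both \eqref{c4} and \eqref{c5} will follow once I show that $\norm{(J^{-1}+T_0(it;\omega_s))^{-1}}$ is bounded by a constant independent of $t\in(0,1)$ and $s\in[0,1]$, together with a bound on $\norm{T_0(it;\chi_\delta)}$ uniform in $t$.

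First I would establish joint continuity: the map $(t,s)\mapsto T_0(it;\omega_s)$ extends to a norm-continuous map on the compact square $[0,1]\times[0,1]$. Indeed, by \eqref{b15} this map is affine in $s$, while for each fixed $s$ the uniform continuity of $T_0(z;\omega_s)$ in the rectangle $\abs{\Re z}<\delta'$, $\Im z\in(0,1)$ (valid for any $0<\delta'<\delta$, as recalled in Section~\ref{sec.bb1}) lets $t\mapsto T_0(it;\omega_s)$ be extended continuously up to $t=0$ and $t=1$. In particular $T_0(it;\chi_\delta)=T_0(it)-T_0(it;\omega_1)$ is a norm-continuous function of $t\in[0,1]$, so that $C_2:=\sup_{t\in[0,1]}\norm{T_0(it;\chi_\delta)}<\infty$.

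Next I would check invertibility of $J^{-1}+T_0(it;\omega_s)$ for every $(t,s)\in[0,1]\times[0,1]$. For $t\in(0,1)$ this is immediate from \eqref{a10b}. For $t=0$, write $T_0(i0;\omega_s)=A_0(0;\omega_s)+iB_0(0;\omega_s)$ with $B_0(0;\omega_s)=\Im T_0(i0;\omega_s)\geq0$ (a norm limit of the non-negative operators $\Im T_0(i\varepsilon;\omega_s)$, $\varepsilon>0$) and with $T_0(i0;\omega_s)$ compact; since $J^{-1}+A_0(0;\omega_s)$ is invertible by the choice \eqref{b13} of $\delta$ (see \eqref{b13a}), Lemma~\ref{lma.b1} gives that $J^{-1}+T_0(i0;\omega_s)$ has a bounded inverse. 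Hence the set $\{J^{-1}+T_0(it;\omega_s):(t,s)\in[0,1]\times[0,1]\}$ is the continuous image of a compact set, hence compact, and is contained in the open set of invertible operators; as inversion is norm-continuous there, $C_1:=\sup_{(t,s)\in[0,1]\times[0,1]}\norm{(J^{-1}+T_0(it;\omega_s))^{-1}}<\infty$. Estimate \eqref{c4} follows from the displayed identity: $\norm{T(it;\omega_s)}\leq\norm{J^{-1}}+\norm{J^{-1}}^2C_1$. For \eqref{c5}, the second resolvent identity yields
$$
T(it;\omega_s)-T(it;\omega_r)=J^{-1}(J^{-1}+T_0(it;\omega_s))^{-1}\bigl(T_0(it;\omega_s)-T_0(it;\omega_r)\bigr)(J^{-1}+T_0(it;\omega_r))^{-1}J^{-1},
$$
and by \eqref{b15} one has $T_0(it;\omega_s)-T_0(it;\omega_r)=(s-r)\bigl(T_0(it;\omega_1)-T_0(it)\bigr)=-(s-r)T_0(it;\chi_\delta)$, whose norm is at most $C_2\abs{s-r}$; therefore $\norm{T(it;\omega_s)-T(it;\omega_r)}\leq\norm{J^{-1}}^2C_1^2C_2\,\abs{s-r}$, which is \eqref{c5}.

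The only genuine obstacle is the uniformity of the bound $C_1$ on $(J^{-1}+T_0(it;\omega_s))^{-1}$ up to the boundary $t=0$: the pointwise invertibility from \eqref{a10b} degenerates as $t\to0$, and it is precisely the uniform continuity hypothesis (supplying the continuous extension to $t=0$) together with the affine dependence on $s$ from \eqref{b15} and the invertibility of $J^{-1}+A_0(0;\omega_s)$ that rescue a uniform estimate through the compactness argument above. Everything else is routine operator algebra.
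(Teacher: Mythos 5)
Your proposal is correct and follows essentially the same approach as the paper: the identity \eqref{c6}, invertibility of $J^{-1}+T_0(+i0;\omega_s)$ via Lemma~\ref{lma.b1} and \eqref{b13a}, a uniform bound on the inverse over the compact parameter range, and the affine dependence on $s$ from \eqref{b15} for the Lipschitz estimate. The only cosmetic difference is that the paper pushes the second-resolvent-identity step one line further to the form $(r-s)(I-T(z;\omega_s)J)T_0(z;\chi_\delta)(I-JT(z;\omega_r))$ so as to invoke \eqref{c4} directly, whereas you bound the two resolvent factors by your constant $C_1$; these are equivalent.
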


\begin{proof}
1. 
Similarly to \eqref{a10b}, we have
$$
(J^{-1}+T_0(z;\omega_s))(J-JT(z;\omega_s)J)
=
(J-JT(z;\omega_s)J)(J^{-1}+T_0(z;\omega_s))
=I
$$
and therefore
\begin{equation}
T(z;\omega_s)
=
J^{-1}-J^{-1}(J^{-1}+T_0(z;\omega_s))^{-1}J^{-1}
\label{c6}
\end{equation}
for all  $s\in[0,1]$ and all $\Im z\not=0$.

2. 
By \eqref{b13a}, 
the operator  $J^{-1}+A_0(0; \omega_s)$
is invertible for all $s\in[0,1]$. 
By Lemma~\ref{lma.b1}, it follows that  
 $J^{-1}+T_0(+i0;\omega_s)$ is also invertible for all 
 $s\in[0,1]$. 
 Since the operator $J^{-1}+T_0(it;\omega_s)$
 is uniformly continuous in $s\in[0,1]$, $t\in(0,1)$ 
 in the operator norm, it follows that the norm of 
 the inverse $(J^{-1}+T_0(it;\omega_s))^{-1}$
 is uniformly bounded for $s\in[0,1]$, $t\in(0,1)$. 
 By \eqref{c6}, we obtain the bound \eqref{c4}. 
 
3. 
Using \eqref{c6}, for any $z\in\C\setminus\R$ we obtain
\begin{multline}
T(z; \omega_s)-T(z; \omega_r)
=
J^{-1}(J^{-1}+T_0(z;\omega_r))^{-1}J^{-1}
-
J^{-1}(J^{-1}+T_0(z;\omega_s))^{-1}J^{-1}
\\
=
J^{-1}(J^{-1}+T_0(z;\omega_s))^{-1}
(T_0(z; \omega_s)-T_0(z; \omega_r))
(J^{-1}+T_0(z;\omega_r))^{-1}J^{-1}
\\
=
(r-s)(I-T(z; \omega_s)J)T_0(z; \chi_\delta)
(I-JT(z; \omega_r)).
\label{c6a}
\end{multline}
Since $T_0(z; \chi_\delta)=T_0(z)-T_0(z; \omega_1)$, 
the limit $T_0(+i0;\chi_\delta)$ exists in the operator norm
and therefore $\norm{T_0(it; \chi_\delta)}$ is uniformly bounded
for $t\in(0,1)$. 
Combining this with \eqref{c6a} and the estimate \eqref{c4}, we obtain 
\eqref{c5}. 
\end{proof}

\subsection{Proof of Theorem~\ref{thm.b4}}\label{sec.c3}

\begin{lemma}\label{lma.c2}
Under the assumptions of Theorem~\ref{thm.b4}, 
for all $s\in[0,1]$ one has
\begin{equation}
\Ker H(\omega_s)=\Ker H_0.
\label{c7}
\end{equation}
\end{lemma}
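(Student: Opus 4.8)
The plan is to prove that the kernels of $H(\omega_s)$ and $H_0$ coincide for all $s\in[0,1]$ by reducing the question to the Birman--Schwinger principle in the form of Proposition~\ref{prp.a1}, exactly as in the $s=1$ argument of Lemma~\ref{thm.b5}, but now for every $s$. The point is that $0$ is not an eigenvalue of $H_0$ (this follows from the analysis above, since $A_0(0;\chi_\delta)$ makes sense, which as in the proof of Lemma~\ref{lma.b3} forces $GE(\{0\};H_0)=0$; but more directly, $0\notin\calN$ together with Proposition~\ref{prp.b2}-type reasoning already gives control near $0$), so the real content is to show $H(\omega_s)$ has no kernel at $0$ and that any kernel vector is automatically a kernel vector of $H_0$.

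First I would note that $0\notin\sigma(H_0)$ need not hold — $0$ may lie in $\sigma_\ess(H_0)$ — so one cannot simply invoke invertibility of $H_0$. Instead I would argue as follows. By \eqref{b13a}, the operator $J^{-1}+A_0(0;\omega_s)=J^{-1}+\Re T_0(+i0;\omega_s)$ is invertible for all $s\in[0,1]$, and by Lemma~\ref{lma.b1} (applied with $\calM=J^{-1}$, $\calT=T_0(+i0;\omega_s)$, using $\Im T_0(+i0;\omega_s)=B_0(0;\omega_s)\geq0$) the operator $J^{-1}+T_0(+i0;\omega_s)$ itself is invertible for all $s\in[0,1]$. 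Now suppose $H(\omega_s)f=0$ for some $f\neq0$. Then $f\in\Dom(H_0-aI)^{1/2}$ and, writing the eigenvalue equation in the factorised form, $f=-R_0(0)G(\omega_s)^*J G(\omega_s)f$ in the appropriate quadratic-form sense; setting $u=G(\omega_s)f\in\calK$ one obtains $(J^{-1}+T_0(0;\omega_s))Ju=0$. But here $T_0(0;\omega_s)$ means the boundary value $T_0(+i0;\omega_s)$, which exists and is self-adjoint because $B_0(0;\omega_s)$ vanishes — indeed, \eqref{b17} applied with $H(\omega_s)$-data, or rather the identity $GE(\{0\};H_0)=0$ together with \eqref{b15}, shows $G(\omega_s)E(\{0\};H_0)=0$ and hence $B_0(0;\omega_s)=0$, so $T_0(+i0;\omega_s)=A_0(0;\omega_s)$ is self-adjoint. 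Invertibility of $J^{-1}+A_0(0;\omega_s)$ then forces $u=0$, i.e. $G(\omega_s)f=0$, whence $G(\omega_s)^*JG(\omega_s)f=0$ and therefore $H_0 f=H(\omega_s)f=0$; thus $f\in\Ker H_0$. This gives $\Ker H(\omega_s)\subset\Ker H_0$.

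For the reverse inclusion, if $H_0 f=0$ then $f\in\Ran E(\{0\};H_0)$, and since $GE(\{0\};H_0)=0$ (hence also $G(\omega_s)E(\{0\};H_0)=0$ because $\omega_s(H_0)$ commutes with $E(\{0\};H_0)$ and $\omega_s(0)=1-s\chi_\delta(0)=1-s$, which is harmless) we get $G(\omega_s)f=0$ and therefore $H(\omega_s)f=H_0 f+G(\omega_s)^*JG(\omega_s)f=0$, so $f\in\Ker H(\omega_s)$. Combining the two inclusions yields \eqref{c7}.

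The step I expect to be the main obstacle is the careful justification, at the level of quadratic forms rather than bounded operators, that the eigenvalue equation $H(\omega_s)f=0$ is genuinely equivalent to $(J^{-1}+T_0(+i0;\omega_s))(JG(\omega_s)f)=0$ — that is, making rigorous the passage $f=-R_0(0)G(\omega_s)^*JG(\omega_s)f$ when $0$ may sit inside the essential spectrum of $H_0$. The resolution is that one does not need $R_0(0)$ to be a bounded operator: one works with the identity on $\Ran E(\R\setminus(-\delta,\delta);H_0)$ where $h_0$ is invertible, exactly as in Lemma~\ref{thm.b5}, because the vanishing $G(\omega_s)E((-\delta,\delta);H_0)E(\{0\};\cdot)$-type contributions are controlled by \eqref{b17}; the component of $f$ in $\Ran E((-\delta,\delta);H_0)$ is annihilated by $G(\omega_s)$ only at $\{0\}$, so one should instead observe directly that $G(\omega_s)E(\{0\};H_0)=0$ and restrict attention to $h_0=H_0|_{\calH_0}$, for which $0\in\sigma(h_0)$ or not is immaterial since on $\calH_0\ominus\Ran E(\{0\};H_0)$ one has genuine invertibility near $0$ after possibly shrinking $\delta$. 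Thus the whole argument is a mild refinement of the reduction already carried out for $s=1$.
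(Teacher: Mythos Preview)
Your inclusion $\Ker H_0\subset\Ker H(\omega_s)$ is fine, and it matches the paper's argument. The trouble is entirely in the other direction, $\Ker H(\omega_s)\subset\Ker H_0$, and there the proposal has two defects.

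First, the assertion that $B_0(0;\omega_s)=0$ is false. The vanishing $GE(\{0\};H_0)=0$ says only that the \emph{point-mass} of the spectral measure at $0$ contributes nothing; it does not kill the absolutely continuous density, which is precisely what $B_0(\lambda)=\pi\frac{d}{d\lambda}F_0(\lambda)$ encodes. For $s=0$ one has $B_0(0;\omega_0)=B_0(0)$, which is typically nonzero when $0$ lies in the a.c.\ spectrum (e.g.\ in the Schr\"odinger example). This error is in fact harmless for the logic you set up --- Lemma~\ref{lma.b1} already gives invertibility of $J^{-1}+T_0(+i0;\omega_s)$ without any self-adjointness --- but it signals a misunderstanding of what $B_0$ is.

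The genuine gap is the step ``$H(\omega_s)f=0\Rightarrow (J^{-1}+T_0(+i0;\omega_s))JG(\omega_s)f=0$''. You write $f=-R_0(0)G(\omega_s)^*JG(\omega_s)f$ ``in the appropriate quadratic-form sense'', but when $0\in\sigma_\ess(H_0)$ there is no such sense: $R_0(0)$ does not exist, and it is not clear why the sandwiched boundary value $T_0(+i0;\omega_s)$ should appear in an eigenvalue equation at the real point $0$. Your proposed fix --- restricting to $\calH_0=\Ran E(\R\setminus(-\delta,\delta);H_0)$ as in Lemma~\ref{thm.b5} --- does not work for $0<s<1$: the subspace $\calH_0$ reduces $H(\omega_1)$ because $\omega_1(H_0)$ is exactly the projection onto $\calH_0$, but it does \emph{not} reduce $H(\omega_s)$ for intermediate $s$, since $\omega_1(H_0)$ does not commute with $G^*JG$ in general. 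So the reduction that makes $s=1$ easy is unavailable here.

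The paper bypasses all of this. It never tries to write a Birman--Schwinger equation at $0$. Instead it uses the uniform bound $\norm{T(it;\omega_s)}\leq C$ from Lemma~\ref{lma.c1} (estimate \eqref{c4}) together with the elementary inequality of type \eqref{b17},
\[
0\leq G(\omega_s)E(\{0\};H(\omega_s))\bigl(G(\omega_s)E(\{0\};H(\omega_s))\bigr)^*\leq \varepsilon\,\Im T(i\varepsilon;\omega_s),
\]
to conclude directly that $G(\omega_s)E(\{0\};H(\omega_s))=0$. Then for $f\in\Ker H(\omega_s)$ one has $G(\omega_s)f=0$, hence $H_0 f=H(\omega_s)f-G(\omega_s)^*JG(\omega_s)f=0$. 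This is a two-line argument once \eqref{c4} is in hand, and it avoids any need to make sense of $R_0(0)$.
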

\begin{proof}
Since $T_0(it; \omega_s)$ is bounded 
uniformly in $t\in(0,1)$,  we obtain, as in \eqref{b17}:
$$
G(\omega_s)E(\{0\};H_0)=0.
$$ 
Thus, for any $f\in\Ker H_0$ we get 
$H(\omega_s)f=H_0f+G(\omega_s)^*JG(\omega_s)f=0$. 
We see that $\Ker H_0\subset \Ker H(\omega_s)$. 
Conversely, using the bound \eqref{c4} in the same 
way we obtain $G(\omega_s)E(\{0\};H(\omega_s))=0$. 
It follows that for any $f\in\Ker H(\omega_s)$ we have
$H_0f=H(\omega_s)f-G(\omega_s)^*JG(\omega_s)f=0$
and so $\Ker H(\omega_s)\subset \Ker H_0$. 
\end{proof}

Let us define the functions $\chi_-$, $\zeta$, $\psi$ as follows:
$$
\chi_-(x)=
\begin{cases}
1,& x<0,\\
1/2, & x=0,\\
0,& x>0,
\end{cases}
\quad
\zeta(x)=
\begin{cases}
\frac1\pi\tan^{-1}(1/x), & x\not=0,\\
0, & x=0
\end{cases}
$$
and $\psi(x)=\chi_-(x)+\zeta(x)$. 
By definition, $\psi\in C(\R)$, 
$\psi(x)\to0$ as $x\to\infty$
and $\psi(x)\to1$ as $x\to-\infty$. 
The key statement
in the proof of Theorem~\ref{thm.b4} is 
\begin{lemma}\label{lma.c3}
Under the assumptions of Theorem~\ref{thm.b4}, 
the operator $\zeta(H(\omega_s))$ depends
continuously on $s\in[0,1]$ in the operator norm. 
\end{lemma}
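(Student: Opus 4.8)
Lemma~\ref{lma.c3} asserts norm-continuity of $\zeta(H(\omega_s))$ in $s$, where $\zeta(x) = \frac1\pi\tan^{-1}(1/x)$ is a bounded continuous function on $\R$ vanishing at infinity but with a jump-like behaviour only in the sense that $\zeta(0^+) = 1/2$, $\zeta(0^-) = -1/2$ — wait, actually $\zeta$ as defined is continuous at $0$ with value $0$, so $\zeta$ is genuinely continuous on all of $\R$, but it does \emph{not} vanish at infinity fast, and near $0$ it oscillates between $\pm 1/2$. So naive norm-resolvent continuity via Proposition~\ref{prp.b1}(i) does not apply because $\zeta$ is not in $C_0(\R)$: $\lim_{x\to 0}\zeta(x)$ does not exist. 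That is exactly the obstruction, and the whole point of sandwiching with $G(\omega_s)$.

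The plan is to express $\zeta(H(\omega_s))$ through the sandwiched resolvent $T(\cdot\,;\omega_s)$, where the estimates of Lemma~\ref{lma.c1} give uniform bounds and Lipschitz-in-$s$ continuity. Concretely, I would use a Stieltjes/Poisson-type integral representation
\[
\zeta(H(\omega_s)) = \frac{1}{\pi}\,\mathrm{Im}\!\int_0^\infty \log\!\Big(1 + \frac{i t}{H(\omega_s)}\Big)\,\text{(suitable measure)}
\]
or, more practically, the formula $\tan^{-1}(1/x) = \int_0^1 \frac{x}{x^2+t^2}\,dt$ valid for $x\neq 0$, which gives (with the correct normalisation of $\zeta$)
\[
\zeta(H(\omega_s)) = \frac{1}{\pi}\int_0^1 H(\omega_s)\big(H(\omega_s)^2 + t^2 I\big)^{-1}\,dt
= \frac{1}{\pi}\int_0^1 \mathrm{Re}\, R_s(it)\,dt,
\]
where $R_s(it) = (H(\omega_s) - it I)^{-1}$ (here one uses $\mathrm{Re}\,R_s(it) = \frac12(R_s(it) + R_s(-it)) = H(\omega_s)(H(\omega_s)^2 + t^2)^{-1}$, after checking $\Ker H(\omega_s) = \Ker H_0$ is handled by Lemma~\ref{lma.c2} so the integrand is well-defined on the orthogonal complement and zero on the kernel). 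Then I would insert the resolvent identity $R_s(it) - R_0(it) = -(G(\omega_s)R_0(-it))^*(J - JT(it;\omega_s)J)(G(\omega_s)R_0(it))$ from \eqref{a10a} applied to the pair $H_0$, $H(\omega_s)$.

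The key step is then to split $\zeta(H(\omega_s)) - \zeta(H(\omega_r))$ using this identity. The difference of the ``free'' parts cancels since both involve $R_0(it)$; what remains is an integral over $t\in(0,1)$ of an expression of the form $(G(\omega_s)R_0(-it))^*\big[\text{something in }T(it;\omega_s) - T(it;\omega_r)\big](G(\omega_s)R_0(it))$ plus cross terms where the difference is $G(\omega_s) - G(\omega_r) = G\big(\omega_s(H_0)^{1/2} - \omega_r(H_0)^{1/2}\big)$, which is itself $O(|s-r|^{1/2})$ or better in norm once sandwiched appropriately — actually since $\omega_s - \omega_r = (r-s)\chi_\delta$, the operator $\omega_s(H_0) - \omega_r(H_0) = (r-s)\chi_\delta(H_0)$ exactly, so one should arrange to have $\omega_s(H_0)$ (not its square root) appearing, which the sandwiched-resolvent formulation does. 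Using $\norm{G R_0(it)\omega(H_0)^{1/2}} = \norm{G(\omega)R_0(it)}$-type bounds: the point is that $G R_0(-it)^*$ times $G R_0(it)$ is $T_0(2it)$-like and stays controlled, while the middle factor is Lipschitz in $s$ by \eqref{c5}. The $t$-integral over $(0,1)$ is finite because the integrand is uniformly bounded (by \eqref{c4} and the uniform continuity of $T_0$ up to the real axis near $0$, plus analyticity of $R_0(it)(1-\chi_\delta(H_0))$). Dominated convergence then gives $\norm{\zeta(H(\omega_s)) - \zeta(H(\omega_r))} \le C|s-r|$.

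The main obstacle is making the integral representation rigorous on the kernel and near $x = 0$ / near the real axis: the integrand $H(\omega_s)(H(\omega_s)^2+t^2)^{-1}$ is bounded by $1/(2t)$, which is \emph{not} integrable on $(0,1)$, so one cannot simply estimate $\norm{\zeta(H(\omega_s))}$ termwise — the cancellation in the difference (or against $R_0$) is essential, and one must be careful that after subtracting $\mathrm{Re}\,R_0(it)$ the remaining kernel-sandwiched expression \emph{is} integrable in $t$ uniformly. This is where Lemma~\ref{lma.c1}'s uniform bound \eqref{c4} on $\norm{T(it;\omega_s)}$ — valid for \emph{all} $t\in(0,1)$, not just $t$ bounded away from $0$ — does the decisive work, together with the fact that $\mathrm{Re}\,T_0(it;\chi_\delta) \to A_0(0;\chi_\delta)$ as $t\to +0$ in norm (so the singular behaviour in $t$ is killed by the smoothing built into $\chi_\delta$ and by \eqref{a15}/the hypothesis of Theorem~\ref{thm.b1}). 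Once the integrand in $\zeta(H(\omega_s)) - \zeta(H(\omega_r))$ is shown to be bounded uniformly in $t\in(0,1)$ and $O(|s-r|)$ pointwise, the conclusion is immediate.
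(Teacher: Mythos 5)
Your setup is the right one — the Stieltjes-type representation $\zeta(x)=\frac1{2\pi}\int_{-1}^1\frac{dt}{x-it}$, the resolvent identity \eqref{c5a} to subtract $R_0(it)$ under the integral, Lemma~\ref{lma.c2} to handle the kernel, and Lemma~\ref{lma.c1} to get the Lipschitz dependence on $s$ of the middle factor $J-JT(it;\omega_s)J$. That is exactly how the paper starts. But the proposal has a genuine gap in the claim that, after sandwiching, ``the integrand \ldots is bounded uniformly in $t\in(0,1)$.'' It is not. The sandwiched integrand
$$
\omega_s(H_0)^{1/2}\bigl(GR_0(-it)\bigr)^*\bigl(J-JT(it;\omega_s)J\bigr)\bigl(GR_0(it)\bigr)\omega_s(H_0)^{1/2}
$$
has outer factors $GR_0(\pm it)$ whose norm blows up as $t\to+0$. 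Indeed
$$
\norm{GR_0(it)}^2=\Norm{G(H_0^2+t^2)^{-1}G^*}=t^{-1}\norm{\Im T_0(it)},
$$
and since $\Im T_0(it)\to B_0(0)$ in norm under \eqref{a15}, which in general is nonzero on the essential spectrum, we get $\norm{GR_0(it)}\sim Ct^{-1/2}$. The middle factor is uniformly bounded by \eqref{c4}, but the product then scales like $t^{-1}$, which is not integrable on $(0,1)$. Neither the extra $\omega_s(H_0)^{1/2}$ factors (bounded by $1$) nor taking the difference in $s$ (which only helps the middle factor, not the outer ones) repairs this. So your dominated-convergence step collapses.

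The missing idea is that the $t$-integral must be controlled by an $L^2$-in-$t$, not pointwise-in-$t$, mechanism. The paper promotes the integrals $\int_{-1}^1(GR_0(\mp it))^*f(t)\,dt$ to operators $X_\pm:L^2((-1,1);\calK)\to\calH$ and proves they are \emph{bounded} (Lemma~\ref{lma.c5}(i)) by rewriting $\norm{X_+f}^2$ via the first resolvent identity as a quadratic form with kernel $i\bigl(T_0(-it_1)-T_0(it_2)\bigr)/(t_1+t_2)$, and recognising the boundedness of the Hilbert transform in $L^2$. This is a genuine cancellation between different values of $t$, invisible at the level of pointwise norms of the integrand. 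With $X_\pm$ bounded, one writes $2\pi(\zeta(H_0)-\zeta(H(\omega_s)))=\omega_s(H_0)^{1/2}X_+Y(\omega_s)X_-^*\omega_s(H_0)^{1/2}$ and the continuity in $s$ follows from \eqref{c5}. Without the $X_\pm$ boundedness argument (or some equivalent singular-integral estimate), the formal computation in your proposal cannot be closed.
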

The proof of Lemma~\ref{lma.c3} is given in Sections~\ref{sec.c4}, \ref{sec.c5}.
Now we are ready to provide
\begin{proof}[Proof of Theorem~\ref{thm.b4}]

1. 
Clearly, $A_0(0; \omega_s)$ is continuous in $s$ in the operator norm. 
By our choice of $\delta$ the operator $J^{-1}+A_0(0;\omega_s)$ 
is invertible for all $s\in[0,1]$.  Thus, the continuity of the projection
$E(\R_-; J^{-1}+A_0(0;\omega_s))$ follows directly from Proposition~\ref{prp.b1}(ii). 

2. Consider the projection $E(\R_-; H(\omega_s))$. 
Using  \eqref{c7}, 
we obtain
$$
E(\R_-;H(\omega_s))
=
\chi_-(H(\omega_s))
+
\frac12 E(\{0\};H(\omega_s))
=
\psi(H(\omega_s))
-
\zeta(H(\omega_s))
+
\frac12 E(\{0\};H_0).
$$
By Lemma~\ref{lma.c3}, it remains to prove that 
$\psi(H(\omega_s))$ depends continuously on $s\in[0,1]$ 
in the operator norm.

3. 
Let us prove that $H(\omega_s)$ is continuous in $s$ in the norm 
resolvent sense. 
For any $z\in\C\setminus\R$, similarly to \eqref{a10a}, 
we have the iterated resolvent identity
\begin{equation}
(H(\omega_s)-zI)^{-1}
-
R_0(z)
=
-\omega_s(H_0)^{1/2}(G R_0(\overline z))^*
(J-JT(z;\omega_s)J)GR_0(z)\omega_s(H_0)^{1/2}.
\label{c5a}
\end{equation}
Clearly, $\omega_s(H_0)^{1/2}$ depends continuously 
on $s$ in the operator norm. 
By \eqref{c6a}, the operator $T(z;\omega_s)$ depends 
continuously on $s$ in the operator norm. 
It follows that 
$(H(\omega_s)-zI)^{-1}$ depends 
continuously 
on $s$ in the operator norm.

4. 
It is easy to see that there exists $a\in\R$ such that 
$a<\inf(\sigma(H(\omega_s)))$
for all $s\in[0,1]$. 
Let $\wt \psi\in C(\R)$ be such that $\wt \psi(x)=\psi(x)$ 
for all $x\geq a$ and $\wt \psi(x)=0$ for $x\leq a-1$. 
Then 
$\psi(H(\omega_s))=\wt \psi(H(\omega_s))$ for all $s$. 
By Proposition~\ref{prp.b1}(i), 
the operator $\wt \psi(H(\omega_s))$ is continuous
in $s$ in the operator norm. This proves the required
statement.  
\end{proof}

\subsection{Proof of Lemma~\ref{lma.c3}}\label{sec.c4}

We will use the following elementary representation for the function 
$\zeta$:
$$
\zeta(x)=\frac1\pi \tan^{-1}(1/x)=\frac1{2\pi}\int_{-1}^1 \frac{dt}{x-it}, 
\quad x\not=0.
$$
Using the resolvent identity \eqref{c5a}, from this 
representation we formally obtain:
\begin{multline}
2\pi\bigl(\zeta(H_0)-\zeta(H(\omega_s))\bigr)
=
\int_{-1}^1\bigl( (H(\omega_s)-it)^{-1}-R_0(it)\bigr)dt
\\
=
\omega_s(H_0)^{1/2}
\int_{-1}^1 (GR_0(-it))^*(J-JT(it;\omega_s)J)GR_0(it)\omega_s(H_0)^{1/2}dt.
\label{c9}
\end{multline}
Of course,
the validity of this formula and 
the convergence of the integral in the r.h.s. have to be rigourously justified; 
this will be done below. 
We note that, by \eqref{c7}, the value $\zeta(0)$ is unimportant; 
the contribution from this value cancels out in the l.h.s. of \eqref{c9}.

Let us denote by $X_+$ and $X_-$ the operators from 
$L^2((-1,1);\calK)$ to $\calH$ defined by 
\begin{equation}
X_\pm f=\int_{-1}^1(GR_0(\mp it))^*f(t)dt,
\label{c10}
\end{equation}
where $f$ belongs to the dense set of functions vanishing
in a neighbourhood of  $t=0$. 
In what follows we prove that $X_\pm$ extend to bounded operators
from $L^2((-1,1);\calK)$ to $\calH$. 

Next, denote by $Y(\omega_s)$ the operator in $L^2((-1,1);\calK)$ 
defined by 
\begin{equation}
(Y(\omega_s)f)(t)=
(J-JT(it;\omega_s)J)f(t), 
\quad t\not=0.
\label{c11}
\end{equation}
Note that $T(-it;\omega_s)=T(it;\omega_s)^*$.
By Lemma~\ref{lma.c1}, the operators $Y(\omega_s)$, are bounded 
for all $s$ and 
\begin{equation}
\norm{Y(\omega_s)-Y(\omega_r)}\leq C\abs{s-r}. 
\label{c12}
\end{equation}
In what follows we prove
\begin{lemma}\label{lma.c5}
\begin{enumerate}[\rm (i)]
\item
The operators $X_\pm$ defined by \eqref{c10}
extend to bounded operators from $L^2((-1,1);\calK)$ to $\calH$. 
 
\item 
The identity 
\begin{equation}
2\pi(\zeta(H_0)-\zeta(H(\omega_s)))
=
\omega_s(H_0)^{1/2}X_+Y(\omega_s)X_-^*\omega_s(H_0)^{1/2}
\label{c13}
\end{equation}
holds true. 
\end{enumerate}
\end{lemma}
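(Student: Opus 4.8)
\textit{Approach.} I would establish part (i) first; part (ii) is then an essentially formal consequence of the resolvent identity \eqref{c5a}.

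\textit{Part (i).} Since $X_\pm f=\int_{-1}^1(GR_0(\mp it))^{*}f(t)\,dt$, it suffices by duality to bound the adjoints, i.e.\ to show $\int_{-1}^1\|GR_0(\mp it)\phi\|_{\calK}^2\,dt\le C\|\phi\|^2$ for all $\phi\in\calH$ (the two signs being identical). Fix $I=(-\tfrac12,\tfrac12)$ and split $\phi=E(I;H_0)\phi+E(\R\setminus I;H_0)\phi$. For the second summand $\|GR_0(\mp it)E(\R\setminus I;H_0)\|$ is bounded uniformly in $|t|<1$ (write $GR_0(\mp it)=G(H_0-aI)^{-1/2}(H_0-aI)^{1/2}R_0(\mp it)$ and use \eqref{a8}), so its contribution is $\le C\|\phi\|^2$; the real point is the first summand. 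Put $h(z)=GR_0(z)E(I;H_0)\phi$, a $\calK$-valued function analytic off $[-\tfrac12,\tfrac12]$, hence in each half-plane. The hypothesis that $T_0(z)$ is uniformly continuous on the bounded rectangle $\{|\Re z|<1,\,\Im z\in(0,1)\}$ forces $\sup\|T_0(z)\|<\infty$ there, hence also for $\Im z\in(-1,0)$ (since $T_0(\bar z)=T_0(z)^{*}$); this is local $H_0$-smoothness of $G$ on $(-1,1)$, whence the standard estimate $\sup_{\e\in(0,1)}\int_{-1}^1\|GR_0(\lambda\pm i\e)\psi\|^2\,d\lambda\le C\|\psi\|^2$ (see e.g.\ \cite{Yafaev}). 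Together with the elementary bound $\|h(\lambda\pm i\e)\|\le C\|\phi\|/(|\lambda|-\tfrac12)$ for $|\lambda|>1$ (again via the factorisation) this shows $h\in H^2$ of each half-plane with $\|h\|_{H^2}\le C\|\phi\|$. Finally, the vertical segment is a Carleson set for $H^2$ of the relevant half-plane: for $F$ subharmonic the sub-mean-value property gives $\int_0^1|F(\pm it)|^2\,dt\lesssim\int_{\{|x|<y\}}|F(x+iy)|^2\,y^{-1}\,dx\,dy$, and $y^{-1}\,dx\,dy$ on that cone satisfies the Carleson box condition, so $\int_0^1\|h(\pm it)\|^2\,dt\lesssim\|h\|_{H^2}^2\le C\|\phi\|^2$; combining the two half-planes covers all $t\in(-1,1)$. (Alternatively one can bound $X_+^{*}X_+$ directly via its kernel $-i(s+t)^{-1}(T_0(-is)-T_0(it))$ on $L^2((-1,1);\calK)$, using uniform continuity of $T_0$ near the antidiagonal and Hilbert's inequality at the jump of $s\mapsto T_0(-is)$ at $s=0$; the Hardy-space route seems cleaner.)

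\textit{Part (ii).} Use $\zeta(x)=\tfrac1{2\pi}\int_{-1}^1(x-it)^{-1}\,dt$ ($x\ne0$), $\zeta(0)=0$; for $\kappa\in(0,1)$ put $\zeta_\kappa(x)=\tfrac1{2\pi}\int_{\kappa<|t|<1}(x-it)^{-1}\,dt$, which is real-valued with $|\zeta_\kappa|\le\tfrac12$, $\zeta_\kappa(0)=0$, and $\zeta_\kappa\to\zeta$ pointwise. On $\kappa<|t|<1$ one has $\dist(it,\sigma(H_0))\ge|t|\ge\kappa$ and likewise for $H(\omega_s)$, so $\tfrac1{2\pi}\int_{\kappa<|t|<1}R_0(it)\,dt$ and $\tfrac1{2\pi}\int_{\kappa<|t|<1}(H(\omega_s)-it)^{-1}\,dt$ converge in operator norm and equal $\zeta_\kappa(H_0)$, $\zeta_\kappa(H(\omega_s))$ by Fubini and the functional calculus. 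Subtracting and inserting \eqref{c5a} at $z=it$ gives, for each $\kappa$,
\[
2\pi\bigl(\zeta_\kappa(H_0)-\zeta_\kappa(H(\omega_s))\bigr)=\omega_s(H_0)^{1/2}X_+\,P_\kappa Y(\omega_s)\,X_-^{*}\,\omega_s(H_0)^{1/2},
\]
where $P_\kappa$ is multiplication by $\mathbf 1_{\{\kappa<|t|<1\}}$ on $L^2((-1,1);\calK)$; here part (i) makes $X_\pm$ bounded and the pointwise (in $t$) action of $Y(\omega_s)$ lets one collect the integrand into this form. Letting $\kappa\to0$: the left side tends to $2\pi(\zeta(H_0)-\zeta(H(\omega_s)))$ strongly by dominated convergence in the functional calculus (note $\zeta_\kappa(0)=\zeta(0)=0$), while $P_\kappa\to I$ strongly on $L^2((-1,1);\calK)$ and $X_+$ is bounded, so the right side tends to $\omega_s(H_0)^{1/2}X_+Y(\omega_s)X_-^{*}\omega_s(H_0)^{1/2}$ strongly. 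This proves \eqref{c13} and makes the formal computation \eqref{c9} rigorous.

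\textit{Main obstacle.} Part (i). Every pointwise-in-$t$ bound is too lossy: $\|GR_0(\mp it)\phi\|^2\le|t|^{-1}\|\Im T_0(i|t|)\|\,\|\phi\|^2$ yields only a logarithmically divergent $\int_0^1t^{-1}\,dt$, the obstruction being a non-vanishing $B_0(0)=\Im T_0(i0)$. Boundedness of $X_\pm$ therefore genuinely uses the analyticity of $z\mapsto GR_0(z)\phi$ — i.e.\ a cancellation — delivered either by the Hardy-space/Carleson estimate above or by the Hilbert-matrix argument for the kernel.
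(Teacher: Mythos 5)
Your part (ii) follows the paper's argument essentially verbatim: truncate $\zeta$ to $\zeta_\kappa$ by removing $|t|<\kappa$, insert \eqref{c5a} at $z=it$, and pass to the limit $\kappa\to0$ (the paper keeps both truncation projections, calls the parameter $\varepsilon$, and argues in the weak operator topology via $X_\pm(\varepsilon)^*\to X_\pm^*$ strongly, but this is cosmetic). Your part (i), by contrast, is a genuinely different proof, and your parenthetical ``alternative'' is in fact the route the paper takes. The paper never invokes $H_0$-smoothness or Hardy spaces: it expands $\norm{X_+f}^2$ via the first resolvent identity to obtain the kernel $i(t_1+t_2)^{-1}(T_0(-it_1)-T_0(it_2))$ on $L^2((-1,1);\calK)$ and splits the resulting quadratic form as $i(\mathbb T^*Mf,f)-i(M\mathbb Tf,f)$, where $M$ is the truncated Hilbert transform (up to the reflection $t\mapsto -t$) and $\mathbb T$ is multiplication by $T_0(it)$; $L^2$-boundedness of the Hilbert transform finishes the argument at once. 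Your primary route --- local $H_0$-smoothness deduced from the rectangle hypothesis, hence $h(z)=GR_0(z)E(I;H_0)\phi\in H^2$ of each half-plane with norm $\lesssim\norm{\phi}$, followed by a Carleson-type restriction to the vertical segment --- is also correct and arguably more conceptual, but it imports considerably more machinery than the problem requires. Your ``main obstacle'' diagnosis is exactly right (a pointwise bound loses a logarithm because $B_0(0)\neq0$); the paper absorbs that cancellation through the principal value in the Hilbert kernel, you through analyticity in a half-plane --- two faces of the same $L^2$ phenomenon.
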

Now we can provide
\begin{proof}[Proof of Lemma~\ref{lma.c3}]
Since $\omega_s(H_0)^{1/2}$ depend continuously on $s$ in the 
operator norm, from  \eqref{c12} and \eqref{c13} we immediately 
obtain the required statement. 
\end{proof}

\subsection{Proof of Lemma~\ref{lma.c5}}\label{sec.c5}
(i) We will prove the boundedness of $X_+$; the operator $X_-$ 
can be considered in the same way. 
Let 
$$
\mathcal D=C_0^\infty((-1,1)\setminus\{0\};\calK);
$$ 
clearly, $\mathcal D$ is dense in $L^2((-1,1);\calK)$. 
For $f\in\mathcal D$, using the resolvent identity 
$$
(z_1-z_2)R_0(z_1)R_0(z_2)
=
(R_0(z_1)-R_0(z_2)), 
$$
we obtain
\begin{multline}
\norm{X_+f}^2
=
\int_{-1}^1 dt_1 \int_{-1}^1 dt_2 ((GR_0(-it_2))^*f(t_2),(GR_0(-it_1))^*f(t_1))
\\
=\int_{-1}^1dt_1 \int_{-1}^1 dt_2 \frac{i}{t_1+t_2}((T_0(-it_1)-T_0(it_2))f(t_2),f(t_1)).
\label{c14}
\end{multline}
Thus, we are led to the consideration of the operator 
in $L^2((-1,1);\calK)$ with the integral kernel
$(T_0(-it_1)-T_0(it_2))/(t_1+t_2)$. 
For $f\in\mathcal D$, let us define
$$
(Mf)(t_1)=\mbox{v.p.}\int_{-1}^1\frac{f(t_2)}{t_1+t_2}dt_2.
$$
Up to the change of variables $t\mapsto (-t)$, 
this is  the operator of the Hilbert transform
restricted onto the interval $(-1,1)$. 
Since the Hilbert transform is bounded in $L^2$, 
the operator $M$ is bounded in $L^2((-1,1);\calK)$. 

Next, let $\mathbb T$ be the operator in $L^2((-1,1);\calK)$
given by 
$$
(\mathbb T f)(t)=T_0(it)f(t),
\quad t\not=0.
$$
Since the norm of $T_0(it)$ is uniformly bounded, the operator $\mathbb T$ 
is bounded. 
The r.h.s. of  \eqref{c14} can be rewritten as
\begin{multline*}
\lim_{\varepsilon\to+0}
\biggl(
\iint\limits_{\substack{\abs{t_1}\leq1, \abs{t_2}\leq1 \\ \abs{t_1+t_2}>\varepsilon}}
\frac{i(T_0(-it_1)f(t_2),f(t_1))}{t_1+t_2}dt_1\, dt_2
-
\iint\limits_{\substack{\abs{t_1}\leq1, \abs{t_2}\leq1 \\ \abs{t_1+t_2}>\varepsilon}}
\frac{i(T_0(it_2)f(t_2),f(t_1))}{t_1+t_2}dt_1\, dt_2
\biggr)
\\
=
i(\mathbb T^*Mf,f)-i(M\mathbb Tf,f),
\quad f\in\mathcal D,
\end{multline*}
and therefore $X_+$ extends to a bounded operator. 

(ii)
For any $\varepsilon>0$, let 
$$
\zeta_\varepsilon(x)
=
\frac1{2\pi}\int_{-1}^{-\varepsilon}\frac{dt}{x-it}
+
\frac1{2\pi}\int_{\varepsilon}^1 \frac{dt}{x-it}, 
$$
and let $X_\pm(\varepsilon):L^2((-1,1);\calK)\to \calH$ 
be the operators 
$$
X_\pm(\varepsilon)f
=
\int_{-1}^{-\varepsilon}(GR_0(\mp it))^*f(t)dt
+
\int_{\varepsilon}^1 (GR_0(\mp it))^*f(t)dt.
$$
Since the norm $\norm{GR_0(it)}$ is uniformly bounded for $\abs{t}>\varepsilon$, 
it is clear directly from the definition of  $X_\pm(\varepsilon)$
that these operators  are bounded for each $\varepsilon>0$. 
Applying the resolvent 
identity \eqref{c5a}, by a calculation similar to \eqref{c9} we see that 
\begin{equation}
2\pi(\zeta_\varepsilon(H_0)-\zeta_\varepsilon(H(\omega_s)))
=
\omega_s(H_0)^{1/2}X_+(\varepsilon)
Y(\omega_s)
X_-(\varepsilon)^*\omega_s(H_0)^{1/2}
\label{c15}
\end{equation}
holds true. Let us prove that both sides of \eqref{c15} converge 
weakly to the corresponding sides of \eqref{c13} as $\varepsilon\to+0$. 

Since $\zeta_\varepsilon$ is uniformly bounded and 
$\zeta_\varepsilon(x)\to\zeta(x)$ as $\varepsilon\to+0$ 
for all $x\in\R$ (it is here that the choice of the value $\zeta(0)$
is important) we get that the l.h.s. of \eqref{c15} converges 
weakly to the l.h.s. of \eqref{c13}. 

Next, since $X_+^*$ and $X_-^*$ are bounded by part (i) 
of the Lemma, for any $g\in\calH$ we have 
$$
(X_\pm^*g)(t)=GR_0(\mp it)g, 
\quad t\not=0,
$$
and
$$
\int_{-1}^1\norm{GR_0(it)g}_\calK^2dt<\infty.
$$
It follows that for any $g\in\mathcal H$
$$
\norm{(X_\pm^*(\varepsilon)-X_\pm^*)g}^2
=
\int_{-\varepsilon}^\varepsilon 
\norm{GR_0(it)g}_\calK^2dt\to0
$$
as $\varepsilon\to+0$. 
Thus, $X_\pm^*(\varepsilon)$ converges strongly 
to $X_\pm^*$ as $\varepsilon\to+0$. 
It follows that the r.h.s. of \eqref{c15} converges 
weakly to the r.h.s. of \eqref{c13}. 
This completes the proof.
\qed

\section*{Acknowledgements}
The author is grateful to Serge Richard, Dmitri Yafaev and Nikolai Filonov 
for careful critical reading 
of the manuscript and for offering a number of useful remarks.

\end{document}